\documentclass[11pt]{article}
\usepackage[T1]{fontenc}
\usepackage{lmodern}
\usepackage{amsmath,amssymb,amsthm,mathtools}
\usepackage{xcolor}
\usepackage{aliascnt}
\usepackage{booktabs,array}
\usepackage[margin=1.08in]{geometry}
\usepackage[hidelinks]{hyperref}
\usepackage[nameinlink,noabbrev]{cleveref}
\usepackage{microtype}
\hypersetup{
 pdftitle={A central limit theorem for the random assignment problem},
 pdfauthor={Gilles Mordant}
}

\numberwithin{equation}{section}
\newtheorem{theorem}{Theorem}[section]
\newaliascnt{proposition}{theorem}
\newtheorem{proposition}[proposition]{Proposition}
\aliascntresetthe{proposition}
\newaliascnt{lemma}{theorem}
\newtheorem{lemma}[lemma]{Lemma}
\aliascntresetthe{lemma}
\newaliascnt{corollary}{theorem}

\aliascntresetthe{corollary}
\theoremstyle{remark}
\newaliascnt{remark}{theorem}
\newtheorem{remark}[remark]{Remark}
\aliascntresetthe{remark}
\crefname{proposition}{proposition}{propositions}
\Crefname{proposition}{Proposition}{Propositions}
\crefname{lemma}{lemma}{lemmas}
\Crefname{lemma}{Lemma}{Lemmas}
\crefname{corollary}{corollary}{corollaries}
\Crefname{corollary}{Corollary}{Corollaries}
\crefname{remark}{remark}{remarks}
\Crefname{remark}{Remark}{Remarks}
\DeclareMathOperator{\Var}{Var}

\DeclareMathOperator{\osc}{osc}
\DeclareMathOperator{\diag}{diag}
\newcommand{\E}{\mathbb E}
\newcommand{\Pp}{\mathbb P}
\newcommand{\R}{\mathbb R}
\newcommand{\1}{\mathbf 1}

\newcommand{\dd}{\,\mathrm d}

\title{A central limit theorem for the random assignment problem}
\author{Gilles Mordant\thanks{Yale University}}
\date{\today}

\begin{document}
\maketitle

\begin{abstract}
Let \(C_n\) be the minimum cost of a perfect matching in an
\(n\times n\) matrix of independent uniform random variables.  We
 prove that
\[
 \sqrt n\{C_n-\zeta(2)\}
 \ \Longrightarrow\ 
 \mathcal N\bigl(0,4\zeta(2)-4\zeta(3)\bigr).
\]
The proof begins with an exact change of variables based on a
uniformly rooted shortest-path selection of an optimal dual
potential.  After the unused reduced costs are integrated out, a
reference law separates the rows conditionally on the potential
field, while the ordered potential gaps become independent
exponentials.  The only residual dependence is a directed-tree
factor.  Ordering the potentials turns its zero--one support into a
Ferrers matrix, whose matrix-tree determinant is triangular.  A
singular inverse-degree estimate and exact normalization then yield
total-variation convergence to the reference law.  Finally, a
conditional triangular-array central limit theorem accounts for row
noise, and a second triangular array accounts for the linear response
of the potential field.
The strategy used here is likely to be applicable to other problems.
\end{abstract}

\section{Introduction}
The random assignment problem is classical.  Let
\(Y=(Y_{ij})_{1\le i,j\le n}\) have independent
\({\rm Unif}[0,1]\) entries, where \(Y_{ij}\) is the cost of assigning
person \(i\) to task \(j\).  The minimum cost of a one-to-one assignment is
\[
 C_n=\min_{\sigma\in S_n}\sum_{i=1}^nY_{i,\sigma(i)},
\]
where $S_n$ is the set of permutations of $\{1, \ldots, n\}$. 
This problem has a long history (reviewed below), but a central limit
theorem has thus far remained out of reach.  We prove the following result.

\begin{theorem}\label{thm:main}
As \(n\to\infty\),
\[
 \sqrt n\{C_n-\zeta(2)\}
 \Longrightarrow
 \mathcal N\bigl(0,4\zeta(2)-4\zeta(3)\bigr).
\]
\end{theorem}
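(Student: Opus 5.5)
The plan follows the route announced in the abstract. We pass to the LP dual of the assignment problem: there are potentials \(u_i,v_j\) with \(u_i+v_j\le Y_{ij}\), equality on the optimal permutation, and \(C_n=\sum_i u_i+\sum_j v_j\). The dual optimum is not unique; it is determined only up to a shift and a polytope of freedom. We remove this freedom by a canonical choice. We pick a uniformly random root among the \(2n\) vertices and take shortest-path distances in the reduced-cost digraph built on the optimal matching. This gives a measurable, exactly invertible map from \(Y\) to the data \((\sigma,\text{potentials},\text{reduced costs})\).

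\textbf{Step 1 (change of variables).} We compute the Jacobian of this map. The tight edges form a spanning structure: the matching together with a shortest-path tree. The remaining reduced costs \(Y_{ij}-u_i-v_j>0\) are free, apart from the box constraint coming from \(Y_{ij}\le1\). Integrating these unused reduced costs out yields a density for the potential field. This density is a product of
\begin{itemize}
\item a row-separable reference factor, conditionally on the potentials;
\item exponential-type factors in the ordered potential gaps;
\item a single combinatorial factor counting admissible directed spanning trees.
\end{itemize}

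\textbf{Step 2 (the tree factor).} We order the potentials. In that order, the admissibility of a tree edge (\(u_i+v_j\ge0\) in the relevant sense) has Ferrers-shaped support. The matrix-tree theorem then gives a Laplacian minor that is triangular, so the tree count is the explicit product \(\prod_k d_k\) of threshold degrees. We must show that this factor is asymptotically negligible relative to the reference law, in the sense that the ratio converges to \(1\) in \(L^1\) of the reference measure. This is done by comparing \(\sum_k\log d_k\) with its typical value, using that \(d_k\) is nearly deterministic (binomial-like with linear mean) except for the few smallest indices. Those smallest indices require a singular inverse-degree estimate, bounding \(\E\sum_k 1/d_k\) and its fluctuations. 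Exact normalization of both densities then upgrades ratio concentration to total-variation convergence, via Scheffé. I expect this to be the main obstacle: the small-degree end is where \(1/d_k\) is singular, and it must be controlled uniformly in \(n\).

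\textbf{Step 3 (CLT under the reference law).} Under the reference law, the potentials are driven by i.i.d. exponential gaps, whose empirical profile converges to the deterministic one known from Aldous's \(\zeta(2)\) limit. Conditionally on the field, the rows are independent. We write \(C_n=\sum_i(\text{row contribution}_i)\) and decompose \(\sqrt n(C_n-\zeta(2))\) as a conditional row-noise term plus a field term.
\begin{itemize}
\item \textbf{Row-noise term.} We apply Lindeberg--Feller to the conditionally independent triangular array. Its conditional variance converges in probability to a constant.
\item \textbf{Field term.} We linearize \(C_n\) in the empirical gap measure. This turns the term into a weighted sum of independent centered exponentials, and a second Lindeberg CLT applies.
\end{itemize}
The two limits are asymptotically independent, one being conditional on the field, so the variances add. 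Evaluating the resulting integrals against the limiting potential profile \(1/(1+e^{x})\)-type densities should give \(4\zeta(2)-4\zeta(3)\). Finally, total-variation closeness from Step 2 transfers the distributional limit back to \(C_n\), which proves Theorem~\ref{thm:main}.
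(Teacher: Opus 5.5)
Your outline follows the paper's architecture, but Step~2, as you describe it, would fail. The tree count $\prod_kd_k$ is not a negligible factor, and $d_k$ is not nearly deterministic. With $N_i=\#\{j\ne i:A_i+U_j\ge0\}$ one has $(N_i+1)/n\approx L(A_i)$ uniformly, where $A_i$ is a row variable with a nondegenerate $O(1)$ conditional law. The empirical law of $N_i/n$ tends to the density $-\log(1-x)$ on $(0,1)$. Hence $\sum_k\log d_k$ is a sum of $n$ conditionally independent $O(1)$ row terms, with fluctuations of order $\sqrt n$ even given $U$. So $\prod_kd_k$ divided by any function of $U$ cannot tend to a constant.

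The missing idea is to divide the tree polynomial by the row hazards $\kappa_i=\sum_jw_{ij}\approx N_i$ and move them into the rows. The row factor then becomes the first-failure density $\kappa_{i,U}F_{i,U}$. Its normalizer $Z_i(U)=n\{F_{i,U}(-U_i)-F_{i,U}(n-U_i)\}$ is a boundary term, and $\prod_iZ_i=n^n\prod_{i<j}(1-|U_i-U_j|/n)_+\{1-q_n^{\rm cap}(U)\}$ is what produces the exponential gaps. (Your Step~1 also treats the gap law as exact; its likelihood ratio $W_n$ tends to $e^{-\zeta(2)}$, not to $1$.) What remains is $\widehat{\mathcal T}_n=n\mathfrak T(w)/\prod_i\kappa_i$, whose Ferrers zero--one version is exactly $\prod_i(1+1/N_i)\to e^{\zeta(2)}$. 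So the singular inverse-degree estimate enters as the law of large numbers $\sum_i1/N_i\to\zeta(2)$ in $L^1$. It is not an endpoint correction to a concentration argument for $\sum_k\log d_k$. The constants $e^{\pm\zeta(2)}$ cancel only through the exact partition functions $Z_n^{\rm can}$ and $Z_n^{\rm ad}$.

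Two smaller points. First, the true weights $w_{ij}=(1-a_{ij}/n)^{-1}$ are not zero--one, and $\widehat{\mathcal T}_n=\sum_o\tau_o(P)\,n/\kappa_o$ carries singular root weights, so two-sided ratio concentration is not directly available. The paper proves only a lower bound in probability. It does so via the cofactor determinant $\mathcal K_{n,M}$ with root weights $M\wedge(n/\kappa_o)$ and a $\det{}_2$ perturbation from the Ferrers matrix. It then closes with the exact mean bound $\E_{\mathbb P_n^{\rm ad}}\widehat{\mathcal T}_n\le1/\overline W_n$, which is a one-sided version of your Scheff\'e step. Second, ``should give $4\zeta(2)-4\zeta(3)$'' leaves the variance unidentified. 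It splits as $v_{\rm row}=4\zeta(3)-2\zeta(2)$, from the logistic row law $L(u-d)/L(u)$, plus $v_{\rm env}=6\zeta(2)-8\zeta(3)$, from the influence function of the mean row response. In that influence function each $U_k$ enters both as the source of its own row and as a target in every other row.
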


\subsection{AI disclosure}

This proof is not a one-prompt exploit: I have been working for quite some time on optimal transport and matching problems.
I somehow forced the AI to help me explore a geometric intuition that I had come up with a few months ago, even before the models reached their current level.
Funnily, during the interaction, I had to force the AI not to drift to attempts involving the Stein method and force it to stick to my ideas.

AI was then used to complete the proofs, catch mistakes and verify the paper (both via numerical simulations and general ``thinking''), as well as to improve the exposition.
The models ChatGPT 5.6 and Opus 5 (as well as previous versions) were used.

\subsection{History of the problem}
The probabilistic assignment problem has been studied for more than
six decades.  Kurtzberg's 1962 analysis gave logarithmic upper bounds
for simple assignment heuristics \cite{Kurtzberg1962}.  Walkup then
proved the first bound on \(\E C_n\) that was uniform in \(n\), namely
\(\E C_n<3\) \cite{Walkup1979}.  Linear programming soon became central:
Karp reduced the uniform upper bound to \(\E C_n<2\) via linear-programming
structure \cite{Karp1987}, while Dyer, Frieze
and McDiarmid placed this argument in a general theory of linear
programs with random costs \cite{DyerFriezeMcDiarmid1986}.  Steele's
monograph, especially its fourth chapter, gives a systematic account
of this early probabilistic-optimization line \cite{Steele1997}.
The complementary lower-bound line was already primal--dual in
spirit.  Lazarus obtained \(1+e^{-1}\); Goemans and Kodialam constructed
a sharper dual heuristic giving a constant greater than \(1.441\); and
Birgitta Olin's thesis constructed a feasible random dual yielding the
then-best bound \(1.51\)
\cite{Lazarus1993,GoemansKodialam1993,Olin1992}.  Thus both the
importance of the dual variables and a narrow interval around the
eventual answer were visible well before the exact solution.

A second line came from disordered systems.  M\'ezard and Parisi used
the replica method, first to predict the limit \(\zeta(2)=\pi^2/6\) and
then to analyze the replica-symmetric cavity solution and its
finite-size corrections \cite{MezardParisi1985,MezardParisi1987}.
Parisi later conjectured the striking finite-\(n\) identity
\[
 \E C_n^{\rm exp}=\sum_{k=1}^n\frac1{k^2}
\]
for mean-one exponential costs \cite{Parisi1998}.  Coppersmith and
Sorkin generalized the conjecture to minimum \(k\)-assignments in
rectangular matrices and, by a constructive algorithm, improved the
previous upper bound on the limiting mean to below \(1.94\)
\cite{CoppersmithSorkin1999}.  Exact small cases and Laplace transforms
were developed by Alm and Sorkin, while Buck, Chan and Robbins
formulated broader finite conjectures
\cite{AlmSorkin2002,BuckChanRobbins2002}.
Parisi and Rati\'eville subsequently corrected errors in the
M\'ezard--Parisi replica computation of the leading finite-size term,
and Caracciolo, D'Achille, Malatesta
and Sicuro studied how that term depends on the cost distribution
\cite{ParisiRatieville2002,CaraccioloDachilleMalatestaSicuro2017}.

The rigorous first-order theory developed in parallel.  Aldous first
proved convergence of the mean and of the optimum for exponential
costs, and observed that the uniform model is asymptotically equivalent
because only the density at zero matters \cite{Aldous1992}.  His objective method then
identified the constant as \(\zeta(2)\) through the Poisson-weighted
infinite tree (PWIT) \cite{Aldous2001}.  The full finite exponential
formula, including the Coppersmith--Sorkin generalization, was proved
by Linusson and W\"astlund and independently by the Stanford group of
Nair, Prabhakar and Sharma
\cite{LinussonWastlund2004,NairPrabhakarSharma2005}.  Recursive
distributional equations and endogeny clarified why the logistic PWIT
message is canonical
\cite{AldousBandyopadhyay2005,Bandyopadhyay2011}.  On the algorithmic
side, Bayati, Shah and Sharma related the assignment LP to convergent
max-product belief propagation, and Salez and Shah proved asymptotic
optimality in the random model with quadratic running time
\cite{BayatiShahSharma2008,SalezShah2009}.  

Fluctuations have proved substantially harder than the mean.  Talagrand
applied product-space concentration directly to random assignment
\cite{Talagrand1995}; for uniform costs, Lee and Su later obtained the
explicit bounds
\[
 c_1n^{-5/2}(\log n)^{-3/2}
 \le \Var(C_n)\le
 c_2n^{-1}(\log n)^2
\]
\cite{LeeSu2002}.  For mean-one exponential costs, W\"astlund found
exact formulas for all moments and proved
\[
 \Var(C_n^{\rm exp})
 =\frac{4\zeta(2)-4\zeta(3)}n+O(n^{-2})
\]
\cite{Wastlund2005,Wastlund2010}.  Chatterjee subsequently proved an
order-\(n^{-1/2}\) lower bound on fluctuations under his
\(\mathcal P^+(1)\) hypothesis.  Its smooth super-polynomial-tail
conditions exclude the compactly supported uniform law; he also noted
that an upper bound of order \(n^{-1/2}\) was not known for
nonexponential costs \cite{Chatterjee2019}.  Cao proved
CLTs for sparse optimization problems including diluted minimum
matching and pointed to the apparent loss of uniform contraction in the
mean-field limit as the obstruction for minimum matching \cite{Cao2021}.
In the nonbipartite random-link matching model,
Malatesta, Parisi and Sicuro used replicas and the cavity method to
derive the corresponding variance \((\zeta(2)-\zeta(3))/n\) and a
large-deviation function \cite{MalatestaParisiSicuro2019}; the factor
of four agrees with W\"astlund after passing between matching and
assignment normalizations, but this is not a proof for the bounded
bipartite model.  Most recently, W\"astlund showed that the exponential
model has an explicitly recursive rational moment-generating function;
his proposed zero-free-disk conjecture would imply a Gaussian limit,
but remains open \cite{Wastlund2026}.  Thus none of the first-order,
concentration, exact-moment or replica results supplies the central
limit theorem for bounded uniform costs.  The local PWIT describes a
typical neighborhood and its cavity message, whereas the theorem below
also requires the joint order-\(n^{-1/2}\) motion of all dual messages.
This global response is the second Gaussian component in our proof.

\subsection{Proof outline and strategy}

The assignment linear program and its dual are
\begin{equation}\label{eq:PrimDual}
 \min\Bigl\{\sum_{i,j}Y_{ij}x_{ij}:x\mathbf1=\mathbf1,
 x^{\mathsf T}\mathbf1=\mathbf1,\ x\ge0\Bigr\},
 \quad\text{and}\quad
 \max\Bigl\{\sum_i\alpha_i+\sum_j\beta_j:
 \alpha_i+\beta_j\le Y_{ij}\Bigr\},
\end{equation}
respectively.  We first relabel the almost surely unique optimal
matching as the diagonal.  An optimal dual pair can then be written
\[
 \alpha_i=d_i-q_i,\qquad \beta_i=q_i,\qquad d_i=Y_{ii},
\]
and dual feasibility becomes
\begin{equation}\label{eq:intro-dual}
 Y_{ij}\ge d_i+q_j-q_i.
\end{equation}
For a uniformly chosen root, shortest paths in the replacement-cost
graph select one feasible \(q\), without declaring a deterministic
vertex special.  With
\[
 D_i=nd_i,\qquad U_i=n(q_i-\bar q),\qquad A_i=D_i-U_i,
\]
the exact primal--dual identities are
\begin{equation}\label{eq:minplus}
 A_i=\min_j\{nY_{ij}-U_j\},\qquad
 U_j=\min_i\{nY_{ij}-A_i\},\qquad
 D_i=A_i+U_i,\qquad
 C_n=\frac1n\sum_iD_i=\frac1n\sum_iA_i.
\end{equation}
Thus \(U_j\) is the relative shadow price of target \(j\), \(A_i\)
is the response of source \(i\) to the entire target-price field, and
\(D_i\) is the cost of the selected edge.

The proof's first idea is to construct a joint representation of the
costs retained by the optimizer together with a dual certificate of
optimality.  Selecting a root uniformly at random, a shortest-path
rule chooses one optimal dual potential and gives an exact change of
variables on the optimizer cell.  This change of variables is the key
to Proposition~\ref{prop:canonical}.  The latter states that, on
\(0\le D_i\le n\), the joint distribution of \((D,U)\) is
proportional to
\[
 \prod_{i\ne j}
 \left(1-\frac{(D_i+U_j-U_i)_+}{n}\right)_+
 \mathfrak T(w),
\]
where \(\mathfrak T(w)\) is a sum of directed-tree polynomials and
\(w\) depends on the matrix
\((D_i+U_j-U_i)_{1\le i,j\le n}\).  Its precise definition is not
important at this point.  Borrowing terminology from
the cavity method, \(U\) plays the role of the cavity field, while the
\(A_i\)'s are the responses of the sources to this field.  This part
of the proof is carried out in Section~\ref{sec:chart}.

\medskip

The exact joint distribution of \((D,U)\) is still too difficult to
analyze directly.  We therefore approximate it, and then approximate
it once more, at asymptotically negligible cost in total variation.

For each fixed \(U\), let \(\nu_{i,U}\) be the normalized distribution of the value at
which the first constraint in row \(i\) becomes tight.  The adaptive
law is
\[
 \mathbb P_n^{\mathrm{ad}}(\dd U,\dd A)
 =
 (Z_n^{\mathrm{ad}})^{-1}
 \prod_iZ_i(U)\,\dd U
 \bigotimes_{i=1}^n\nu_{i,U}(\dd A_i).
\]
Under this law, the \(A_i\)'s are independent conditional on \(U\),
although their conditional distributions still depend on the common
field.

Replacing the marginal law of \(U\) by its leading interaction gives
the reference law $ \mathbb P_n^{\mathrm{ref}}$.   Its potential density is proportional to
\[
 \exp\left\{-\frac1n\sum_{i<j}|U_i-U_j|\right\}.
\]
This is the law of a one-dimensional attractive gas.  Once the
potentials are ordered, the consecutive gaps are independent
exponentials, from which a logistic empirical law emerges.
Section~\ref{sec:reference} defines these two approximating laws and
proves
\(
 \|\mathbb P_n^{\mathrm{ad}}
       -\mathbb P_n^{\mathrm{ref}}\|_{\mathrm{TV}}
 \longrightarrow0.
\)

\medskip

Section~\ref{sec:rows-short} studies one row conditionally on \(U\).
The product of the residual constraint lengths gives the survival
function of \(D_i\), while its logarithmic derivative gives the rate
at which one of the row constraints becomes tight.  This
first-failure representation yields the moment and tail estimates
needed for a conditional characteristic-function argument.  The same
section identifies the limiting empirical vertex law and controls
rows with unusually few active constraints.

\medskip

It remains to justify replacing the exact canonical law by the
adaptive conditional product.  This is the purpose of
Section~\ref{sec:ferrers-short}.  After normalizing the tree
polynomial by its row degrees, the ordered potentials give the
corresponding matrix a nested Ferrers support.  The matrix-tree
determinant can then be triangularized, and the normalized tree
likelihood is shown to converge in \(L^1\) to a constant.  This proves
Proposition~\ref{prop:second-tv-short} that states
\(
 \|\mathbb P_n^{\mathrm{can}}
       -\mathbb P_n^{\mathrm{ad}}\|_{\mathrm{TV}}
 \longrightarrow0.
\)
Together with the first comparison,
\(
 \|\mathbb P_n^{\mathrm{can}}
       -\mathbb P_n^{\mathrm{ref}}\|_{\mathrm{TV}}
 \longrightarrow0.
\)

\medskip

Section~\ref{sec:fluct-short} identifies the two fluctuations that
remain under the reference law.  Write
\[
 m_{i,n}=\E(D_i\mid U).
\]
Then
\begin{equation}\label{eq:intro-decomp}
 \sqrt n\left\{\frac1n\sum_iD_i-\zeta(2)\right\}
 =
 \frac1{\sqrt n}\sum_i(D_i-m_{i,n})
 +
 \frac1{\sqrt n}
 \left\{\sum_im_{i,n}-n\zeta(2)\right\}.
\end{equation}
The first term in \eqref{eq:intro-decomp} is the fluctuation of the
rows with the cavity field \(U\) held fixed.  The second term requires
a second cavity step: one must let the field itself fluctuate and
measure the resulting change in the aggregate row response.  If
\[
 \varpi_n=\frac1n\sum_i\delta_{U_i},
\]
the sum of the conditional means is approximated, on the
\(\sqrt n\)-scale, by a functional
\[
 \Phi_n(U)=n\mathcal M(\varpi_n).
\]
The first variation of \(\mathcal M\) at the logistic law gives its
influence function.  At finite \(n\), this is implemented by
linearizing \(\Phi_n\) around the deterministic harmonic grid
\(\bar u_n\):
\[
 \Phi_n(U)-\Phi_n(\bar u_n)
 =
 D\Phi_n(\bar u_n)[U-\bar u_n]+o_{\mathbb P}(\sqrt n).
\]
Since the ordered fluctuations of \(U\) are generated by independent
exponential gaps, the linear term becomes a triangular array.  This
linear response of the row average to the moving cavity field is the
second-order cavity contribution.
The first term is conditionally independent row noise.  The second is
the response of the conditional mean to the moving dual field.  The
first is handled by the conditional characteristic-function
expansion, while the second is reduced to a triangular array of the
independent exponential gaps.  Conditional characteristic functions
combine the two terms without requiring their unconditional
independence.

Finally, Section~\ref{sec:ProofComp} proves the central limit theorem
under \(\mathbb P_n^{\mathrm{ref}}\) and transfers it to
\(\mathbb P_n^{\mathrm{can}}\) using the total-variation comparisons.
Under the exact canonical law,
\(C_n=n^{-1}\sum_iD_i\), which completes the proof for the optimal
assignment cost.

\section{The rooted dual chart and its exact density}
\label{sec:chart}

\subsection{Dual feasibility as a shortest-path problem}

Continuity of the entries makes the optimal permutation unique almost
surely.  After relabeling the columns of the cost matrix, the optimal permutation is the identity and
complementary slackness gives \eqref{eq:intro-dual}.  Put
\(\Delta_{ij}=Y_{ij}-d_i\), \(i\ne j\).  The diagonal is optimal if
and only if every directed cycle has nonnegative \(\Delta\)-length:
the cost difference of a competing permutation is the sum of the
lengths of its nontrivial cycles.

Fix a root \(o\) and let \({\rm dist}_o(i)\) be the shortest
\(\Delta\)-length of a directed path from \(i\) to \(o\).  There are
no negative cycles, so this is finite, and
\[
 q_i^{(o)}=-{\rm dist}_o(i),\qquad q_o^{(o)}=0
\]
satisfies \eqref{eq:intro-dual}.  It is the componentwise least
feasible potential in the root gauge: summing the inequalities for
any other feasible potential along a path to \(o\) proves the claim.
Away from a null set of path ties, the first edges of the shortest
paths form an in-arborescence directed toward \(o\).

Here an in-arborescence toward \(o\) is a directed spanning tree in
which every nonroot vertex has one outgoing edge, the root has none,
and following arrows always reaches \(o\).  Thus the selected edge at
a vertex is the first step of its shortest route \emph{to} the root.
This convention fixes the orientation used by the directed
matrix-tree theorem later.

\subsection{A three-by-three handle on the construction}

The following example contains the finite construction without its
measure-theoretic bookkeeping.  Consider the already aligned matrix
\[
 Y=
 \begin{pmatrix}
  .10&.20&.45\\
  .40&.20&.30\\
  .45&.33&.15
 \end{pmatrix},
 \qquad d=(.10,.20,.15).
\]
The replacement increments are
\[
 \Delta_{ij}=Y_{ij}-d_i,\qquad
 \Delta=
 \begin{pmatrix}
 0&.10&.35\\
 .20&0&.10\\
 .30&.18&0
 \end{pmatrix}.
\]
The three directed two-cycles have lengths \(.30,.65,.28\), and the
two directed three-cycles have lengths \(.50,.73\).  They are all
positive, so the diagonal is the unique optimizer.  Choose root
\(o=3\).  The shortest route from \(2\) is \(2\to3\), of length
\(.10\), and that from \(1\) is \(1\to2\to3\), of length \(.20\).
Consequently
\[
 q=(-.20,-.10,0),\qquad
 \alpha=d-q=(.30,.30,.15),\qquad \beta=q.
\]
The matrix of dual lower bounds is
\[
 (\alpha_i+\beta_j)_{ij}=
 \begin{pmatrix}
  .10&.20&.30\\
  .10&.20&.30\\
  -.05&.05&.15
 \end{pmatrix}.
\]
It agrees with \(Y\) on the diagonal and on the two tree edges
\(1\to2\) and \(2\to3\), and is strict elsewhere.  The selected
potential is not claimed to be the only optimal potential.  For
example, \(\widetilde q=(-.15,-.05,0)\) is also feasible, but the
shortest-path choice satisfies \(q\le\widetilde q\) coordinatewise.

Centering and scaling give
\[
 U=3(q-\bar q\mathbf1)=(-.30,0,.30),\quad
 D=3d=(.30,.60,.45),\quad A=D-U=(.60,.60,.15).
\]
The net-offer matrix \(3Y_{ij}-U_j\) is
\[
 \begin{pmatrix}
  .60&.60&1.05\\
  1.50&.60&.60\\
  1.65&.99&.15
 \end{pmatrix}.
\]
Its row minima are exactly \(A\).  In the first two rows the diagonal
target ties with the outgoing tree target; these extra tight
inequalities are what select the rooted least potential.  Adding the
diagonal target prices back gives \(A+U=D\).

\subsection{Charts and their Jacobian}

For \(E=\{(i,j):i\ne j\}\), write
\[
 \theta_{ij}(q)=d_i+q_j-q_i,\qquad
 I_{ij}(q)=\1_{\{0<\theta_{ij}(q)<1\}},\qquad
 \lambda_{ij}(q)=(1-\theta_{ij}(q)_+)_+.
\]
Let \(\mathfrak T_o\) be the in-arborescences toward \(o\), and put
\[
\tau_o(z)=\sum_{T\in\mathfrak T_o}\prod_{e\in T}z_e,
\]
 which is a matrix-tree polynomial.
For a potential $q$, $\theta_{ij}(q)$ is the dual lower-bound threshold,
and $\lambda_{ij}(q)$ is the length of the fluctuation interval of entry
$(i,j)$ allowed by the dual inequality.

\begin{lemma}[Rooted charts]\label{lem:charts}
Fix \(d\in[0,1]^n\), let \(\mathcal C(d)\subset[0,1]^E\) be the
section on which the diagonal is optimal, and fix a root \(o\).  For
\(T\in\mathfrak T_o\), define
\[
 \Omega_{o,T}(d)=\left\{(q,y):q_o=0,\ 
 \begin{array}{ll}
 0<\theta_e(q)<1,&e\in T,\\
 \max\{0,\theta_e(q)\}<y_e<1,&e\notin T
 \end{array}\right\}
\]
and set
\[
 \Psi_{o,T,d}(q,y)_e=
 \begin{cases}
  \theta_e(q),&e\in T,\\
  y_e,&e\notin T.
 \end{cases}
\]
There is a null set \(\mathcal N_{d,o}\) such that the maps
\(\Psi_{o,T,d}\), as \(T\) varies, are bijections onto pairwise
disjoint subsets whose union is
\(\mathcal C(d)\setminus\mathcal N_{d,o}\).  Each chart has absolute
Jacobian one.  The set
\(\{(d,c):c\in\mathcal N_{d,o}\}\) is jointly null.  Consequently the
volume of the section at \(d\) is
\begin{equation}\label{eq:cross-section}
 \int_{\R^{n-1}}\sum_{T\in\mathfrak T_o}
 \prod_{e\in T}I_e(q)\prod_{e\notin T}\lambda_e(q)\,\dd q,
\end{equation}
where the dependence on $d$ has been suppressed at most places.
\end{lemma}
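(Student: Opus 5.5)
We construct an explicit inverse to the family of charts \(\Psi_{o,T,d}\). Given \(c\in\mathcal C(d)\), write \(\Delta_e=c_e-d_i\) for \(e=(i,j)\). Since the diagonal is optimal, there are no negative cycles, so \(q=q^{(o)}(c)\) (minus the shortest \(\Delta\)-distance to \(o\)) is well defined, and it satisfies \(q_o=0\) and \(\theta_e(q)\le c_e\) for all \(e\).

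We then discard a null set \(\mathcal N_{d,o}\), consisting of all \(c\) for which one of the following degeneracies occurs:
\begin{itemize}
\item two distinct directed paths from some \(i\) to \(o\) have equal \(\Delta\)-length;
\item some off-diagonal \(c_e\) equals \(0\) or \(1\), or equals \(d_i\);
\item some \(\theta_e(q)\) equals \(0\) or \(1\).
\end{itemize}
Each condition is a finite union of proper affine hyperplanes in \(c\) (with \(d\) treated as a variable too). Hence the union is Lebesgue-null both in each section and jointly in \((d,c)\), by Fubini. This settles the joint-nullity claim.

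Off \(\mathcal N_{d,o}\), each \(i\ne o\) has a unique shortest path to \(o\). The first edges form an in-arborescence \(T=T(c)\), since the shortest-path tree toward \(o\) is acyclic. On \(T\) the inequality is tight, \(c_e=\theta_e(q)\). Off \(T\) it is strict, \(\theta_e(q)<c_e\): equality on a non-tree edge \((i,j)\) would produce a second shortest route from \(i\), namely via \(j\), which is excluded. Combining with \(0<c_e<1\), the point \((q,y)\), with \(y_e=c_e\) for \(e\notin T\), lies in \(\Omega_{o,T}(d)\), and \(\Psi_{o,T,d}(q,y)=c\). So the images of the charts cover \(\mathcal C(d)\setminus\mathcal N_{d,o}\).

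Conversely, take \((q,y)\in\Omega_{o,T}(d)\) and set \(c=\Psi_{o,T,d}(q,y)\). Then \(q\) is a feasible potential: tight on \(T\), strict elsewhere. Summing the inequality \(\Delta_{ij}\ge q_j-q_i\) around any cycle gives nonnegative cycle lengths, so \(c\in\mathcal C(d)\). Moreover, along \(T\) the path length from \(i\) equals \(-q_i\). Any other path to \(o\) has length at least \(-q_i\) by the telescoping feasibility argument, so \(q=q^{(o)}(c)\). A competing path containing a non-tree edge has length strictly larger, so \(T(c)=T\). This shows injectivity, since \((q,y)\) is recovered from \(c\), and disjointness of the images for distinct \(T\). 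We take images modulo \(\mathcal N_{d,o}\).

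For the Jacobian, the coordinates \(y_e\), \(e\notin T\), pass through unchanged. It therefore suffices to show that \(q_{-o}\mapsto(\theta_e(q))_{e\in T}\) has determinant \(\pm1\). This map is affine, \(q\mapsto(q_j-q_i)_{(i,j)\in T}\) plus constants. Its matrix is the reduced incidence matrix of a spanning tree with row/column \(o\) deleted. Order the vertices by depth in \(T\) (parents, i.e.\ heads, first). In that order each edge \((i,j)\) contributes \(-1\) at \(i\) and \(+1\) at \(j\), where \(j\) is earlier or is \(o\) and hence deleted. The matrix is therefore triangular with diagonal \(-1\), and its determinant is \(\pm1\).

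The change of variables on each chart then turns the section volume into
\[
\sum_{T}\int\prod_{e\in T}I_e(q)\prod_{e\notin T}\lambda_e(q)\,\dd q .
\]
This uses that, for fixed \(q\), the \(y_e\)-interval \((\max\{0,\theta_e\},1)\) has length \(\lambda_e(q)\) (zero if \(\theta_e\ge1\)). It yields \eqref{eq:cross-section}.

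The main obstacle is the careful treatment of the null set. In particular, we must verify that strictness off \(T\) and uniqueness of shortest paths are exactly the same generic condition. We must also check that this condition, parametrized jointly in \((d,c)\), is a finite union of hyperplanes rather than merely sectionwise null.
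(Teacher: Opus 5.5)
Your route is the paper's: the negative shortest-distance potential, the first-edge in-arborescence, an explicit inverse, and a triangular reduced incidence matrix for the Jacobian. The gap is in the null set, and it breaks your strictness step.

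You discard path ties, cube-boundary values, $c_e=d_i$ and $\theta_e(q)\in\{0,1\}$. You do not discard directed cycles of zero $\Delta$-length. Yet $\mathcal C(d)$ contains such points, since optimality of the diagonal only forbids negative cycles.

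Suppose a non-tree edge $(i,j)$ is tight. The route ``$i\to j$ followed by the tree path from $j$'' has length $-q_i$, but it need not be a simple path, because the tree path from $j$ may pass through $i$. In that case no condition in your $\mathcal N_{d,o}$ is violated. Instead, $i\to j$ followed by the tree segment from $j$ back to $i$ is a cycle of length $(q_j-q_i)+(q_i-q_j)=0$.

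Here is a concrete instance. Take $n=3$, $o=3$, $d\equiv 1/2$, and $c_{12}=.7$, $c_{21}=.3$, $c_{13}=.6$, $c_{23}=.8$, $c_{31}=.55$, $c_{32}=.65$.
\begin{itemize}
\item All cycles are nonnegative; the $2$-cycle on $\{1,2\}$ has length zero.
\item The shortest paths $1\to3$ and $2\to1\to3$ are unique. Paths from a common vertex have lengths $\{.1,.5\}$ and $\{.3,-.1\}$, so there are no ties.
\item No $c_e$ lies in $\{0,1,1/2\}$, and $q=(-.1,.1,0)$ gives $\theta_e(q)\notin\{0,1\}$ for every $e$.
\end{itemize}
Yet $\theta_{12}(q)=.7=c_{12}$ on the non-tree edge $(1,2)$, so $c$ lies in no chart image. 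Hence the coverage claim is false for your $\mathcal N_{d,o}$. Your closing assertion that strictness off $T$ and uniqueness of shortest paths are ``exactly the same generic condition'' is also incorrect: uniqueness of shortest simple paths does not imply strictness.

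The repair is the paper's exclusion of zero cycles.
\begin{itemize}
\item Add to $\mathcal N_{d,o}$ the finitely many hyperplanes on which a directed simple cycle has zero $\Delta$-length. Each is proper, because a simple cycle involves distinct off-diagonal coordinates with coefficient one.
\item Split the strictness argument into two cases. If the route is simple, it is a second shortest path, excluded by the tie condition. If it revisits $i$, it closes a zero cycle, now excluded.
\end{itemize}
With that addition the rest of your argument goes through. Two smaller remarks: your extra exclusions $c_e=d_i$ and $\theta_e(q)\in\{0,1\}$ are unnecessary, since the cube exclusion already gives $0<\theta_e<1$ on $T$. Also, the set $\{\theta_e(q)\in\{0,1\}\}$ is only \emph{contained in} a finite union of hyperplanes, since $q$ is piecewise affine in $c$.
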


\begin{proof}
We give all exclusions explicitly.  Let \(\mathcal N_{d,o}\) be the
finite union of the affine sets on which
\begin{enumerate}
\item some off-diagonal coordinate has \(c_e\in\{0,1\}\);
\item a directed simple cycle has zero \(\Delta\)-length; or
\item two distinct directed simple paths from some vertex \(i\) to
the root \(o\) have the same \(\Delta\)-length.
\end{enumerate}
There are only finitely many simple cycles and paths.  Every equality
is nontrivial because the alternatives have distinct edge-incidence
vectors, so this is a null set.  The same finite list consists of
Borel affine equalities as \(d\) varies; Fubini therefore gives the
joint null-set assertion.

Take \(c\in\mathcal C(d)\setminus\mathcal N_{d,o}\).  Zero cycles
have been removed, so all directed cycles have strictly positive
length.  Every shortest walk can therefore be shortened to a simple
path, and the path from each \(i\) to \(o\) is unique.  Put
\(q_i=-\operatorname{dist}_o(i)\).  The shortest-path comparison gives
\[
 q_j-q_i\le c_{ij}-d_i.
\]
The paths are suffix-consistent: if their route from \(i\) first
visits \(j\), the remaining suffix must be the unique shortest path
from \(j\) to \(o\), or replacing it would improve the path from
\(i\).  Their first edges consequently form one in-arborescence
\(T\).  Each tree inequality is tight,
\(c_{ij}=d_i+q_j-q_i\).  Every nontree inequality is strict as well.
Indeed, if equality held on a nontree edge \(i\to j\), that edge
followed by the tree path from \(j\) to \(o\) would have total length
\(-q_i\).  If this walk is simple, it is a shortest path from \(i\)
different from the tree path, contrary to exclusion~(3).  If it
revisits \(i\), the edge \(i\to j\) together with the tree segment
from \(j\) back to \(i\) is a directed cycle of length
\((q_j-q_i)+(q_i-q_j)=0\), contrary to exclusion~(2).  The cube
boundary exclusion then gives
\[
 0<\theta_e(q)<1\quad(e\in T),\qquad
 \max\{0,\theta_e(q)\}<c_e<1\quad(e\notin T).
\]
Thus \(c\) lies in the image of exactly the chart indexed by its
shortest-path tree.  Moreover \(q\) is recovered from \(c\) as the
negative distance vector, \(T\) from the unique first edges, and
\(y=c_{E\setminus T}\).  This proves injectivity and disjointness,
not merely coverage.

Conversely, take \((q,y)\in\Omega_{o,T}(d)\) and put
\(c=\Psi_{o,T,d}(q,y)\).  Summing the dual inequalities along any
path from \(i\) to \(o\) gives length at least \(-q_i\).  The
\(T\)-path has exactly that length, while every other simple path
uses a strict nontree edge and is longer.  Hence
\(q_i=-\operatorname{dist}_o(i)\), the paths are unique, and their
first-edge tree is \(T\).  This proves surjectivity onto the claimed
piece.

Now observe that 
\[
\frac{\partial  (c_T,c_{E\setminus T})}{\partial (q_{-o},y_{E\setminus T})} = \begin{pmatrix}B_{T,o}&0\\0&I\end{pmatrix},
\]
where \(B_{T,o}\) is a reduced directed incidence matrix.  It has
determinant \(\pm1\): remove a nonroot leaf and expand along its unique
incident tree row, then iterate.  If \(d\) is included among the
coordinates, the full derivative remains block triangular with an
additional identity block.  Integrating each free nontree coordinate
over \((\max\{0,\theta_e\},1)\) gives one factor \(\lambda_e\) and
proves \eqref{eq:cross-section}.
\end{proof}

\subsection{Randomizing the root and centering the gauge}
\label{sec:RandRoot}
Pass to scaled, centered coordinates
\[
 D_i=nd_i,\quad U_i=n(q_i-\bar q),\quad A_i=D_i-U_i,
 \quad H_0=\{U:\textstyle\sum_iU_i=0\}.
\]
Set
\[
 \rho_n(t)=\left(1-\frac{t_+}{n}\right)_+,\qquad
 \omega_n(t)=
 \begin{cases}
  (1-t/n)^{-1},&0\le t<n,\\
  0,&\text{otherwise},
 \end{cases}
\]
\[
 a_{ij}=D_i+U_j-U_i=A_i+U_j,\qquad
 w_{ij}=\omega_n(a_{ij})\1_{\{i\ne j\}},\qquad
 \mathfrak T(w)=\sum_o\tau_o(w).
\]
These quantities have a direct boundary-crossing meaning.  For an
off-diagonal entry, dual feasibility requires the scaled cost
\(nY_{ij}\) to lie above \(a_{ij}\).  If \(0\le a_{ij}<n\), its
available interval inside \([0,n]\) has relative length
\(\rho_n(a_{ij})=(n-a_{ij})/n\).  Raising \(D_i\), and hence
\(a_{ij}\), by \(\dd d\) removes the fraction
\[
 \frac{\dd d}{n-a_{ij}}=\frac{w_{ij}}n\,\dd d
\]
of that interval.  Thus \(w_{ij}/n\) is the instantaneous rate at
which constraint \((i,j)\) becomes tight during a sweep of row \(i\).
If \(a_{ij}<0\), the lower boundary of the cost cube is still the
binding one, so a small increase does not yet threaten that constraint
and \(w_{ij}=0\).  Notice that \(w_{ij}\) is the inverse of the
\emph{remaining admissible interval}.
The Lebesgue measure on \(H_0\) means \(\dd U_1\cdots\dd U_{n-1}\)
with \(U_n=-\sum_{i<n}U_i\).

\begin{lemma}[Boundary convention]\label{lem:boundary-version}
Put \(\chi_e=\1_{\{0\le a_e<n\}}\).  A version of the rooted-chart
pushforward density is
\[
 g_{o,T}(D,U)=
 \prod_{e\in T}\chi_e\prod_{e\notin T}\rho_n(a_e).
\]
The only points at which this differs from the strict open-chart
description lie in the finite union of affine sets
\(\{a_e=0\}\cup\{a_e=n\}\), which is null in \(\R^n\times H_0\).
With the displayed non-strict convention,
\begin{equation}\label{eq:tree-factorization-global}
 \sum_o\sum_{T\in\mathfrak T_o}g_{o,T}(D,U)
 =\left\{\prod_{e\in E}\rho_n(a_e)\right\}\mathfrak T(w)
\end{equation}
pointwise, with no \(0/0\) interpretation.
\end{lemma}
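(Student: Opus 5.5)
The plan is to carry the open-chart description of Lemma~\ref{lem:charts} through the change of scale and centering, then prove the displayed identity one tree at a time.

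\emph{Pushforward density.} First express the chart conditions in the scaled, centered variables. The map \((d,q)\mapsto(D,U)\) with \(q_o=0\) is affine and invertible: \(U=n(q-\bar q\mathbf1)\), and \(q\) is recovered as \(q=(U-U_o\mathbf1)/n\). Its Jacobian is therefore a constant depending only on \(n\). Since \(\theta_{ij}(q)=a_{ij}/n\), the chart conditions translate as follows:
\begin{itemize}
\item the tree conditions \(0<\theta_e<1\) become \(0<a_e<n\);
\item integrating the free coordinate \(y_e\) of a nontree edge over \((\max\{0,\theta_e\},1)\) gives \(\lambda_e=\rho_n(a_e)\).
\end{itemize}
Hence, up to a constant absorbed in normalization, the pushforward density is \(\prod_{e\in T}\1_{\{0<a_e<n\}}\prod_{e\notin T}\rho_n(a_e)\). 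Replacing the strict inequalities by \(\chi_e\) changes the function only on \(\bigcup_e\{a_e=0\}\cup\{a_e=n\}\). Each \(a_e=D_i+U_j-U_i\) is a nonconstant affine function on \(\R^n\times H_0\), because its coefficient on \(D_i\) is \(1\). So each of these sets is a hyperplane, and the finite union is null. A version of the density may therefore be changed there freely.

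\emph{Pointwise identity, tree by tree.} For fixed \(o\) and \(T\), compare \(g_{o,T}\) with \(\prod_{e\in E}\rho_n(a_e)\prod_{e\in T}w_e\), and consider each tree edge \(e\) separately.
\begin{itemize}
\item If \(0\le a_e<n\), then \(\rho_n(a_e)=1-a_e/n>0\) and \(\omega_n(a_e)=(1-a_e/n)^{-1}\). Their product is \(1=\chi_e\).
\item If \(a_e<0\), then \(\omega_n(a_e)=0=\chi_e\), even though \(\rho_n(a_e)=1\).
\item If \(a_e\ge n\), then \(\omega_n(a_e)=0\) and \(\rho_n(a_e)=0\), and again \(\chi_e=0\).
\end{itemize}
In every case \(\rho_n(a_e)\,\omega_n(a_e)=\chi_e\) holds as an identity of finite real numbers. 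Multiplying over \(e\in T\), and leaving the factors \(\rho_n(a_e)\) with \(e\notin T\) untouched, gives
\[
g_{o,T}=\prod_{e\in E}\rho_n(a_e)\prod_{e\in T}w_e .
\]
Because \(w_{ij}\) carries the factor \(\1_{\{i\ne j\}}\) and \(T\subset E\), summing over \(T\in\mathfrak T_o\) gives \(\prod_{e\in E}\rho_n(a_e)\,\tau_o(w)\). Summing over \(o\) then gives \(\mathfrak T(w)\), which is \eqref{eq:tree-factorization-global}. No division occurs anywhere in this argument, so no \(0/0\) convention is needed.

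\emph{Main obstacle.} The one delicate point is the boundary bookkeeping. One has to check that the chosen conventions \(\omega_n=0\) off \([0,n)\) and \(\rho_n(t)=1\) for \(t\le0\) make the per-edge identity exact at \(a_e=0\) and at \(a_e\ge n\). One also has to confirm that the root randomization only adds charts, so each term in the sum over \(o\) is a version of that root's pushforward. Once those checks are in place, the identity is algebraic.
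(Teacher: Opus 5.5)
Your proposal is correct and follows the paper's argument: you translate the chart conditions through \(\theta_e=a_e/n\), you observe that switching to \(\chi_e\) alters only the tree indicators on an affine null set, and you factor the residual lengths out of each tree monomial. The only difference is cosmetic. You use the per-edge identity \(\rho_n(a_e)\,\omega_n(a_e)=\chi_e\), valid for every real \(a_e\), whereas the paper divides by \(\prod_e\rho_n(a_e)\) when all residual lengths are positive and checks separately that both sides vanish when some \(\rho_n(a_e)=0\); your version handles both cases at once.
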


\begin{proof}
If every residual length \(\rho_n(a_e)\) is positive, factor it from
each tree monomial:
\[
 \prod_{e\in T}\chi_e\prod_{e\notin T}\rho_n(a_e)
 =\left\{\prod_{e\in E}\rho_n(a_e)\right\}
   \prod_{e\in T}\frac{\chi_e}{\rho_n(a_e)}.
\]
The quotient is exactly \(w_e\), and summing gives
\eqref{eq:tree-factorization-global}.  If some residual length is
zero, then \(a_e\ge n\) and \(\chi_e=0\).  A tree containing \(e\)
has a zero \(\chi_e\) factor, while a tree omitting \(e\) has a zero
\(\rho_n(a_e)\) factor.  Hence every monomial on the left and the
common product on the right vanish.  Finally, changing strictness at
\(a_e=0\) or \(a_e=n\) changes only the stated affine null set.
\end{proof}

\begin{proposition}[Canonical law]\label{prop:canonical}
Adjoin an independent uniform root and use its shortest-path
potential.  After the root and tree are forgotten, the induced law
\(\mathbb P_n^{\mathrm{can}}\) of \((D,U)\) has density
\begin{equation}\label{eq:canonical-density-short}
 \frac{n!}{n^{2n-1}}\,
 \1_{\{0\le D_i\le n\ \forall i\}}
 \prod_{i\ne j}\rho_n(D_i+U_j-U_i)\,\mathfrak T(w)
\end{equation}
with respect to \(\dd D\,\dd U\).  Its unnormalized partition
function is
\begin{equation}\label{eq:Zcan}
 Z_n^{\rm can}=\frac{n^{2n-2}}{(n-1)!}.
\end{equation}
The translation \(D_i=A_i+U_i\) has unit Jacobian and gives the
equivalent density
\[
 \frac{n!}{n^{2n-1}}\,
 \1_{\{0\le A_i+U_i\le n\}}
 \prod_{i\ne j}\rho_n(A_i+U_j)\,
 \mathfrak T\bigl((\omega_n(A_i+U_j)\1_{i\ne j})_{ij}\bigr).
\]
Moreover the identities \eqref{eq:minplus} hold exactly.
\end{proposition}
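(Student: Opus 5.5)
Our route to the density is a chain of explicit changes of variables. First we record the law of the relabeled matrix, then we disintegrate over the diagonal, and finally we use the rooted charts of Lemma~\ref{lem:charts} and randomize the root.

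\emph{Reduction to the diagonal event.} Since the optimal permutation is almost surely unique, and the column relabeling \(\sigma\mapsto\) identity maps the event \(\{\sigma^*=\sigma\}\) bijectively onto \(\{\sigma^*=\mathrm{id}\}\) while preserving Lebesgue measure, the aligned matrix \(c\) has density \(n!\,\1_{\{c\in[0,1]^{n\times n},\ \mathrm{id}\ \text{optimal}\}}\). Writing \(c=(d,c_E)\), the density of \(d\) together with the off-diagonal block is \(n!\,\1_{\{d\in[0,1]^n\}}\1_{\{c_E\in\mathcal C(d)\}}\). We then adjoin an independent uniform root \(o\), which contributes a factor \(1/n\).

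\emph{Charts and the root gauge.} For fixed \(o\), Lemma~\ref{lem:charts} parametrizes \(\mathcal C(d)\setminus\mathcal N_{d,o}\) disjointly by \((T,q_{-o},y)\) with unit Jacobian, and the joint null-set statement makes the exceptional set negligible jointly in \(d\). Integrating out \(y\) yields, for each \((o,T)\), the density \(\prod_{e\in T}I_e\prod_{e\notin T}\lambda_e\) in \((d,q_{-o})\). We then pass from the gauge \(q_o=0\) to centered coordinates \(U=n(q-\bar q)\in H_0\). The map \(q_{-o}\mapsto(q-\bar q)_{-n}\) is linear with determinant of absolute value \(1/n\) or \(1\); this is a small computation, done by noting that \(q\mapsto q-\bar q\) restricted to \(\{q_o=0\}\to H_0\) has inverse \(u\mapsto u-u_o\mathbf 1\). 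We track this factor carefully, together with the scaling \(D=nd\) (factor \(n^{-n}\)) and \(U\) (factor \(n^{-(n-1)}\) on \(H_0\) relative to \(q-\bar q\)). Since \(\theta_{ij}(q)=a_{ij}/n\) and \(\lambda_e=\rho_n(a_e)\), Lemma~\ref{lem:boundary-version} identifies the integrand with \(g_{o,T}(D,U)\) up to a null set. Forgetting \((o,T)\) amounts to summing over them, and \eqref{eq:tree-factorization-global} turns the sum into \(\prod_{e}\rho_n(a_e)\,\mathfrak T(w)\). Collecting \(n!\cdot n^{-1}\cdot n^{-n}\cdot n^{-(n-1)}\) times the gauge Jacobian should reproduce the constant \(n!/n^{2n-1}\). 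This bookkeeping of constants is the step most likely to go wrong, and we will check it against \(n=1,2\). The indicator \(0\le D_i\le n\) comes from \(d\in[0,1]^n\).

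\emph{Partition function and identities.} Because the construction is a bijection onto a full-measure set of a probability space, the density integrates to one. Hence \(\int\1\prod\rho_n\,\mathfrak T=n^{2n-1}/n!=n^{2n-2}/(n-1)!\), which is \eqref{eq:Zcan}. The \(A\)-form follows from the unit-Jacobian shear \(D_i=A_i+U_i\) for fixed \(U\).

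\emph{The identities \eqref{eq:minplus}.} Feasibility \(c_{ij}\ge d_i+q_j-q_i\), with equality on the diagonal, gives \(nY_{ij}-U_j\ge D_i-U_i=A_i\), with equality at \(j=i\). This yields \(A_i=\min_j\{nY_{ij}-U_j\}\), and symmetrically \(U_j=\min_i\{nY_{ij}-A_i\}\), again with equality at \(i=j\). The identity \(D_i=A_i+U_i\) holds by definition, and \(\sum_iA_i=\sum_iD_i\) because \(\sum_iU_i=0\). Optimality of the diagonal gives \(C_n=\frac1n\sum_iD_i\).
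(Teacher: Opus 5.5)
Your proposal is correct and follows the paper's proof essentially step for step: density $n!$ on the aligned optimizer cell, the unit-Jacobian rooted charts of \cref{lem:charts}, scaling and centering, root mass $1/n$, the pointwise factorization of \cref{lem:boundary-version}, normalization of a pushforward for \eqref{eq:Zcan}, and feasibility plus diagonal tightness for \eqref{eq:minplus}. The one loose end is your hedged gauge Jacobian. For every root $o$ the map $q_{-o}\mapsto(q-\bar q\mathbf 1)_{-n}$ has $|\det|=1/n$ and never $1$: for $o=n$ it is the $(n-1)\times(n-1)$ matrix $I-\frac1n\mathbf 1\mathbf 1^{\mathsf T}$, and other roots follow by symmetry, since $\dd U_1\cdots\dd U_{n-1}$ on $H_0$ does not depend on which coordinate is dropped. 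The density therefore gains the root-independent factor $n$, which is the paper's $\dd V_{-o}=n\,\dd U$. This is exactly what turns $n!\,n^{-1}n^{-n}n^{-(n-1)}=n!/n^{2n}$ into $n!/n^{2n-1}$, and it lets the sum over roots produce $\mathfrak T(w)$ with a common prefactor.
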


\begin{proof}
Let \(\Sigma(Y)\) be the almost surely unique optimizing
permutation and define the aligned matrix by
\(\widetilde Y_{ij}=Y_{i,\Sigma(Y)(j)}\).  For each fixed
\(\sigma\in S_n\), column permutation maps the optimizer cell with
minimizer \(\sigma\) bijectively and measure-preservingly onto the
identity cell.  Although \(\Sigma(Y)\) is \(Y\)-measurable, the cells
are disjoint outside the optimizer-tie null set; hence, for every
Borel subset \(B\) of the identity cell,
\[
 \Pp\{\widetilde Y\in B\}
 =\sum_{\sigma\in S_n}\operatorname{Leb}(B)
 =n!\operatorname{Leb}(B).
\]
Thus the aligned matrix has density \(n!\) on that cell, with no
independence claim about the selected permutation.  Conditional on
root \(o\),
\cref{lem:charts} gives a unit-Jacobian chart.  Scaling \(d\) gives
\(n^{-n}\dd D\), scaling \(q_{-o}\) gives
\(n^{-(n-1)}\dd V_{-o}\), and centering \(V=nq\) gives
\(\dd V_{-o}=n\dd U\).  Multiplication by the root mass \(1/n\)
therefore gives \(n!/n^{2n-1}\).

A nontree entry integrates over an interval of length
\(\rho_n(a_e)\), while a tree entry is forced to its dual lower bound.
The globally defined factorization in
\cref{lem:boundary-version}, summed over roots and trees, yields
\eqref{eq:canonical-density-short}.  Its
integral is one because it is a pushforward of a probability law,
which gives \eqref{eq:Zcan}.  Finally, dual feasibility and diagonal
equality give both min-plus identities in \eqref{eq:minplus}, with
the diagonal attaining each minimum.  The definition of \(D_i\) gives
\(D_i=A_i+U_i\), and \(\sum_iU_i=0\) gives the cost-sum identity.
\end{proof}

The exact density has now been reduced to explicit row factors and one
residual directed-tree factor.  We next separate these components by
introducing product reference laws.

\section{A product reference law}
\label{sec:reference}

\subsection{The adaptive conditional product}

For fixed \(U\in H_0\) and \(a\in[-U_i,n-U_i]\), define
\[
 F_{i,U}(a)=\prod_{j\ne i}\rho_n(a+U_j),\qquad
 \kappa_{i,U}(a)=\sum_{j\ne i}\omega_n(a+U_j).
\]

\begin{remark}[Interpretation of the construction]
Here \(F_{i,U}(a)\) is the normalized volume of the off-diagonal row entries satisfying all constraints \(nY_{ij}\ge a+U_j\).  The identity \(-F'=(\kappa/n)F\) is  a so-called competing-risks formula: \(\kappa_{i,U}/n\) is the total rate at which a constraint becomes tight, and \(\nu_{i,U}\) is the normalized law of the first tightness level.
\end{remark}

\noindent 
Since \(F'=-\kappa F/n\) almost everywhere,
\begin{equation}\label{eq:Zi-short}
 Z_i(U):=\int_{-U_i}^{n-U_i}\kappa_{i,U}(a)F_{i,U}(a)\,\dd a
 =n\{F_{i,U}(-U_i)-F_{i,U}(n-U_i)\}.
\end{equation}
Whenever \(Z_i>0\), let
\begin{equation}\label{eq:row-kernel-short}
 \nu_{i,U}(\dd a)=Z_i(U)^{-1}
 \1_{[-U_i,n-U_i]}(a)\kappa_{i,U}(a)F_{i,U}(a)\,\dd a.
\end{equation}
When \(Z_i(U)=0\), define instead
\(\nu_{i,U}=\delta_{-U_i}\).  This arbitrary extension makes
\(U\mapsto\nu_{i,U}\) a probability kernel everywhere.  Its precise
value on \(\{Z_i=0\}\) is immaterial under the adaptive law, whose
potential density contains the factor \(\prod_iZ_i\); using the same
extension in both reference laws below keeps their conditional row
kernels literally identical.
The adaptive law is
\begin{equation}
\label{eq:A-law-short}
 \mathbb P_n^{\mathrm{ad}}(\dd U,\dd A)
 =(Z_n^{\rm ad})^{-1}\prod_iZ_i(U)\,\dd U
 \bigotimes_i\nu_{i,U}(\dd A_i).
\end{equation}
Thus conditional independence of the rows under \(\mathbb P_n^{\mathrm{ad}}\) is
a definition, not a property asserted of \(\mathbb P_n^{\mathrm{can}}\).

Let \(m\) be the unique maximizer of \(U\), when it exists, and put
\[
 q_n^{\mathrm{cap}}(U)=\prod_{j\ne m}\frac{U_m-U_j}{n}
\]
on \(\{\osc U<n\}\), and \(q_n^{\mathrm{cap}}=0\) otherwise or when the maximum
is tied.

\begin{lemma}[Potential product]\label{lem:potential-product-short}
\begin{equation}\label{eq:potential-product-short}
 \prod_iZ_i(U)=n^n
 \prod_{i<j}\left(1-\frac{|U_i-U_j|}{n}\right)_+
 \{1-q_n^{\mathrm{cap}}(U)\}.
\end{equation}
\end{lemma}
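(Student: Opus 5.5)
The plan is to evaluate each factor \(Z_i(U)\) in closed form through the boundary identity \eqref{eq:Zi-short}, \(Z_i(U)=n\{F_{i,U}(-U_i)-F_{i,U}(n-U_i)\}\), and then multiply. Both boundary values are products of \(\rho_n\)'s at explicit arguments, so everything reduces to a case analysis on the signs of \(U_j-U_i\). The main work is the sign bookkeeping and checking that the degenerate configurations (ties at the maximum, or \(\osc U\ge n\)) agree with the convention \(q_n^{\mathrm{cap}}=0\).

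\textbf{Lower boundary term.} We have \(F_{i,U}(-U_i)=\prod_{j\ne i}\rho_n(U_j-U_i)\). If \(U_j\le U_i\), the positive part vanishes and the factor is \(1\). If \(U_j>U_i\), the factor is \(\bigl(1-(U_j-U_i)/n\bigr)_+\). So each unordered pair \(\{i,j\}\) contributes the factor \((1-|U_i-U_j|/n)_+\) exactly once, in the row of its smaller coordinate. When \(U_i=U_j\), the factor is \(1\) from both rows, which is consistent. Hence
\[
\prod_iF_{i,U}(-U_i)=\prod_{i<j}\Bigl(1-\frac{|U_i-U_j|}{n}\Bigr)_+ .
\]

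\textbf{Upper boundary term.} We have \(F_{i,U}(n-U_i)=\prod_{j\ne i}\rho_n(n+U_j-U_i)\).
\begin{itemize}
\item If some \(j\ne i\) has \(U_j\ge U_i\), the argument is at least \(n\), so \(\rho_n=0\) and \(F_{i,U}(n-U_i)=0\). Therefore \(Z_i=nF_{i,U}(-U_i)\) for every \(i\) that is not a strict maximizer. If the maximum is tied, this holds for all \(i\), which matches \(q_n^{\mathrm{cap}}=0\).
\item For the unique maximizer \(m\), we have \(F_{m,U}(-U_m)=1\), since every factor has \(U_j<U_m\). If moreover \(\osc U<n\), every argument \(n+U_j-U_m\) lies in \((0,n)\), each factor equals \((U_m-U_j)/n\), and so \(Z_m=n\{1-q_n^{\mathrm{cap}}(U)\}\).
\end{itemize}

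\textbf{The case \(\osc U\ge n\).} Here \(q_n^{\mathrm{cap}}=0\) by definition, but \(Z_m\) may differ from \(n\). Some factors \(\rho_n(n+U_j-U_m)\) have negative argument and equal \(1\) rather than \((U_m-U_j)/n\). This does not matter: some pair has \(|U_i-U_j|\ge n\), so the pairwise product on the right vanishes. On the left, the row \(j\) with \(U_j=\min U\) is not the maximizer, so \(Z_j=nF_{j,U}(-U_j)\), and \(F_{j,U}(-U_j)\) contains the factor \((1-(U_m-U_j)/n)_+=0\). So both sides are zero.

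Multiplying the \(n\) row identities then gives \eqref{eq:potential-product-short} in every case.
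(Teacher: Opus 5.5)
Your proof is correct and follows the paper's argument essentially step for step. You evaluate each \(Z_i\) through the endpoint identity \eqref{eq:Zi-short}, assign each pair factor \((1-|U_i-U_j|/n)_+\) to the lower-endpoint term of its smaller coordinate, note that only a unique maximizer has a nonzero upper-endpoint term (equal to \(q_n^{\mathrm{cap}}\) when \(\osc U<n\)), and treat tied maxima and \(\osc U\ge n\) separately. In the last case, you name the minimal row as the one whose \(Z\) vanishes and check that its upper-endpoint term is zero; this is a slightly more explicit version of the paper's one-line remark.
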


\begin{proof}
At the lower endpoint,
\(F_{i,U}(-U_i)=\prod_{j\ne i}\rho_n(U_j-U_i)\), and the two
directed factors associated with an unordered pair multiply to
\( (1-|U_i-U_j|/n)_+\).  At the upper endpoint every row except a
unique maximizer has a zero factor; the maximizer contributes exactly
\(q_n^{\mathrm{cap}}\).  For that maximizer \(m\),
\(F_{m,U}(-U_m)=1\), so its endpoint subtraction gives the factor
\(1-q_n^{\mathrm{cap}}\).  If the maximum is tied, every upper-endpoint product is
zero because another maximizer supplies a factor \(\rho_n(n)=0\),
in agreement with the convention \(q_n^{\mathrm{cap}}=0\).  If \(\osc U\ge n\),
both sides vanish: a lower-endpoint pair factor, and hence some
\(Z_i\), is zero.  Substitution in \eqref{eq:Zi-short} now proves the
identity in every sector.
\end{proof}

The factor \(1-q_n^{\mathrm{cap}}\) is an upper-endpoint correction, not part of the
pair interaction.  It appears only for the row carrying the largest
potential: in every other row at least one residual interval has
already closed at the upper endpoint.

\subsection{The solvable gap law}

Define the gap law on \(H_0\) by
\begin{equation}\label{eq:gap-law-short}
 \mathbb P_n^{\rm gap}(\dd U)
 =(Z_n^{\rm gap})^{-1}
 \exp\left\{-\frac1n\sum_{i<j}|U_i-U_j|\right\}\dd U,
\end{equation}
and define
\begin{equation}\label{eq:B-law-short}
 \mathbb P_n^{\mathrm{ref}}(\dd U,\dd A)
 =\mathbb P_n^{\rm gap}(\dd U)\bigotimes_i\nu_{i,U}(\dd A_i).
\end{equation}
On the set where \(Z_i=0\), the preceding kernel convention puts
\(D_i=0\) and \(A_i=-U_i\); we also set every inverse degree below
equal to one.  The
exceptional set satisfies
\[
 \{\exists i:Z_i(U)=0\}\subseteq\{\osc U\ge n\}.
\]
Indeed, if \(\osc U<n\), the first-failure interval in every row has
positive length and positive failure probability.  The adaptive
density contains \(\prod_iZ_i\), so the exceptional set is null under
\(\mathbb P_n^{\mathrm{ad}}\); under the gap law its probability is exponentially
small by the tail estimate below.

\begin{lemma}[Independent gaps and logistic shape]\label{lem:gaps-short}
In the increasing ordering \(U_{(1)}<\cdots<U_{(n)}\),
\begin{equation}\label{eq:spacing-short}
 U_{(k+1)}-U_{(k)}=\frac{nE_k}{k(n-k)},\qquad
 E_k\stackrel{\rm iid}{\sim}{\rm Exp}(1),
\end{equation}
and
\begin{equation}\label{eq:ordered-short}
 U_{(j)}=\sum_{k<j}\frac{E_k}{n-k}-\sum_{k\ge j}\frac{E_k}{k},
 \qquad
 \bar u_{j,n}:=\E U_{(j)}=\mathsf H_{j-1}-\mathsf H_{n-j}.
\end{equation}
Moreover
\[
 Z_n^{\rm gap}=\frac{n^{n-1}}{(n-1)!},\qquad
 \frac1n\sum_i\delta_{U_i}\Longrightarrow
 \ell(u)\dd u,\quad
 \ell(u)=\frac{e^{-u}}{(1+e^{-u})^2},
\]
in \(W_2\), in probability.  If \(R=\osc U\), then
\begin{gather}
 R=O_{\Pp}(\log n),\qquad
 \E\sum_j(U_{(j)}-\bar u_{j,n})^2=2\mathsf H_{n-1},\label{eq:grid-L2-short}\\
 \E\left(\sum_j(U_{(j)}-\bar u_{j,n})^2\right)^2=O(\log^2n),
 \qquad
 \E\left|\frac1n\sum_iU_i^2-\frac{\pi^2}{3}\right|
 =O\!\left(\sqrt{\frac{\log n}{n}}\right).\label{eq:moment-short}
\end{gather}
If \(\varpi_n=n^{-1}\sum_i\delta_{U_i}\) and
\(\bar\varpi_n=n^{-1}\sum_j\delta_{\bar u_{j,n}}\), then
\begin{equation}\label{eq:W1-short}
 \E W_1(\varpi_n,\ell)^2=O(\log n/n),\qquad
 W_1(\bar\varpi_n,\ell)=O(\log n/n).
\end{equation}
For every fixed \(a,b,M\), there are \(A,C<\infty\), depending only
on \(a,b,M\), such that
\begin{equation}\label{eq:weighted-tail-short}
 \E\bigl[n^a(1+R)^b\1_{\{R>A\log n\}}\bigr]\le Cn^{-M}.
\end{equation}
\end{lemma}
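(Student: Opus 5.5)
The plan is to order the potentials and change variables to the gaps. On the sector \(U_{\sigma(1)}<\cdots<U_{\sigma(n)}\), with gaps \(g_k=U_{(k+1)}-U_{(k)}\), \(k=1,\dots,n-1\), we have
\[
 \sum_{i<j}|U_i-U_j|=\sum_{k=1}^{n-1}k(n-k)\,g_k ,
\]
because gap \(k\) is crossed by exactly \(k(n-k)\) pairs. On \(H_0\) the ordered vector is determined by the gaps (the centering fixes the location), and the map \((U_1,\dots,U_{n-1})\mapsto g\) is linear. I will compute its Jacobian once: it should be \(n^{-1}\) times a unimodular factor, matching the convention \(\dd U\) on \(H_0\).

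The density on each of the \(n!\) sectors is then
\[
 \prod_k \exp\{-k(n-k)g_k/n\},
\]
so the gaps are independent exponentials with rates \(k(n-k)/n\). This gives \eqref{eq:spacing-short}. The same computation gives
\[
 Z_n^{\rm gap}=n!\cdot c_n\prod_{k=1}^{n-1}\frac{n}{k(n-k)},
\]
where \(c_n\) is the Jacobian constant; checking that this equals \(n^{n-1}/(n-1)!\) fixes \(c_n\) and serves as a sanity check.

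For \eqref{eq:ordered-short}, write \(U_{(j)}=U_{(1)}+\sum_{k<j}g_k\) and solve \(\sum_jU_{(j)}=0\) for \(U_{(1)}\). Exchanging sums gives
\[
 U_{(1)}=-\frac1n\sum_k(n-k)g_k .
\]
Substituting \(g_k=nE_k/(k(n-k))\) yields the displayed formula. Taking expectations gives the harmonic expression for \(\bar u_{j,n}\).

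The fluctuation \(U_{(j)}-\bar u_{j,n}\) is a linear combination of centered exponentials \(E_k-1\), with coefficients \(1/(n-k)\) for \(k<j\) and \(-1/k\) for \(k\ge j\). Summing the squared coefficients over \(j\), the variances are
\[
 \sum_j\Bigl[\sum_{k<j}(n-k)^{-2}+\sum_{k\ge j}k^{-2}\Bigr]
 =\sum_k\Bigl[\frac{n-k}{(n-k)^2}+\frac{k}{k^2}\Bigr]=2\mathsf H_{n-1}.
\]
The fourth-moment bound follows by expanding the square of this quadratic form in independent centered exponentials. Fourth cumulants are bounded, so the result is \(O((\sum_j\Var)^2)=O(\log^2 n)\).

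The logistic shape comes from the deterministic grid. For \(j=\lceil tn\rceil\),
\[
 \bar u_{j,n}=\log\frac{t}{1-t}+O(1/j+1/(n-j)),
\]
which is the logistic quantile function. Hence \(W_1(\bar\varpi_n,\ell)=n^{-1}\sum_j|\bar u_{j,n}-Q(j/n)|\) up to a quantile-discretisation error, and that error is \(O(\log n/n)\) once the boundary indices \(j\le 1\), \(j\ge n\) are included. Then
\[
 W_1(\varpi_n,\bar\varpi_n)^2\le n^{-1}\sum_j(U_{(j)}-\bar u_{j,n})^2 ,
\]
whose expectation is \(O(\log n/n)\) by \eqref{eq:grid-L2-short}. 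This gives \eqref{eq:W1-short}, and \(W_2\) convergence follows the same way.

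For the second-moment statement, split \(n^{-1}\sum U_i^2-\pi^2/3\) into two pieces. The first is the deterministic part \(n^{-1}\sum\bar u_{j,n}^2-\int u^2\ell\), which is \(O(\log^2n/n)\) by a Riemann-sum comparison. The second is the random part, which Cauchy–Schwarz bounds by
\[
 \bigl(n^{-1}\textstyle\sum(U-\bar u)^2\bigr)^{1/2}\bigl(n^{-1}\textstyle\sum \bar u^2\bigr)^{1/2}+n^{-1}\textstyle\sum(U-\bar u)^2 ,
\]
giving \(O(\sqrt{\log n/n})\).

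For the range, \(R=\sum_k nE_k/(k(n-k))\) has mean \(2\mathsf H_{n-1}\). Its Laplace transform is
\[
 \E e^{sR}=\prod_k\Bigl(1-\frac{sn}{k(n-k)}\Bigr)^{-1},
\]
which is finite for \(s<(n-1)/n\). At \(s=1/2\) it is bounded by \(\exp\{O(\log n)\}=n^{O(1)}\). Chernoff then gives \(\Pp(R>A\log n)\le n^{C-A/2}\). This proves \(R=O_{\Pp}(\log n)\). Combining the Chernoff bound with Hölder, using \(\E(1+R)^{2b}=O(\log^{2b}n)\), gives \eqref{eq:weighted-tail-short} after choosing \(A\) large in terms of \(a,b,M\).

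I expect the main obstacle to be bookkeeping rather than ideas. The first concern is getting the \(H_0\) Jacobian and the \(n!\) sector count right so that \(Z_n^{\rm gap}\) comes out exactly. The second is the boundary terms near \(j\approx1,n\) in the \(W_1\) comparison, where \(\bar u_{j,n}\) grows like \(\log n\); these must be shown to contribute only \(O(\log n/n)\).
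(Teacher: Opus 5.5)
Your proposal is correct and follows essentially the same route as the paper. Both arguments use the same steps: the $n!$ ordering sectors with gap coordinates on $H_0$, independent exponential spacings with rates $k(n-k)/n$, the explicit ordered representation whose squared coefficients sum to $2\mathsf H_{n-1}$, a quantile and coupling comparison for the Wasserstein bounds, and the moment generating function of $R$ for the tail. Your variations are cosmetic: you use cumulants and Minkowski instead of a Hilbert-space moment inequality for the fourth moment, and H\"older instead of a reduced exponent to absorb $(1+R)^b$.

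One caution about $Z_n^{\rm gap}$. The constant $c_n$ in $\dd U=c_n\,\dd g$ must be computed directly, not fixed from the target value of $Z_n^{\rm gap}$; otherwise the partition-function identity is circular. It equals $1/n$, which is the reciprocal of the Jacobian of $U\mapsto g$. You can get it, for example, by row-reducing $M_{jk}=\1_{\{k<j\}}-(n-k)/n$ as the paper does.
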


\begin{proof}
\medskip\noindent\emph{Exact spacings.}
On one ordering region,
\(\sum_{i<j}|U_i-U_j|=\sum_{k=1}^{n-1}k(n-k)S_k\), where
\(S_k=U_{(k+1)}-U_{(k)}\).  The zero-sum constraint fixes the
location explicitly:
\[
 U_{(1)}=-\frac1n\sum_{k=1}^{n-1}(n-k)S_k,
 \qquad
 U_{(j)}=U_{(1)}+\sum_{k<j}S_k.
\]
Taking \(U_{(1)},\ldots,U_{(n-1)}\) as coordinates on \(H_0\), row
subtraction in this linear map gives the Jacobian \(1/n\).  Here is
the calculation.  The matrix of the map
\((S_1,\ldots,S_{n-1})\mapsto(U_{(1)},\ldots,U_{(n-1)})\) has entries
\[
 M_{jk}=\1_{\{k<j\}}-\frac{n-k}{n}.
\]
Replacing row \(j+1\) by row \(j+1\) minus row \(j\), successively
from bottom to top, turns rows \(2,\ldots,n-1\) into
\(e_1,\ldots,e_{n-2}\).  Expansion in the last column, whose remaining
entry is \(M_{1,n-1}=-1/n\), gives
\(|\det M|=1/n\).  Hence the \(S_k\)'s are
independent exponentials with rates \(k(n-k)/n\).  Summing the \(n!\)
ordering regions proves the partition function and substitution gives
\eqref{eq:ordered-short}.

\medskip\noindent\emph{Logistic location and fluctuation size.}
The harmonic grid differs from the logistic quantiles
\(\log\{j/(n+1-j)\}\) by
\(O(j^{-1}+(n+1-j)^{-1})\).  In \eqref{eq:ordered-short}, the
coefficient of \(E_k-1\) is \(-1/k\) for \(j\le k\) and
\(1/(n-k)\) for \(j>k\).  Squaring and summing gives
\eqref{eq:grid-L2-short}; the standard fourth-moment inequality for a
sum of independent Hilbert-space-valued variables gives its fourth
moment counterpart.  These facts prove the \(W_2\) convergence and
\eqref{eq:moment-short} by Cauchy--Schwarz and Riemann sums.  More
explicitly, if \(\bar\varpi_n=n^{-1}\sum_j\delta_{\bar u_{j,n}}\),
then
\[
 \begin{gathered}
 W_2^2(\bar\varpi_n,\ell)=O(n^{-1}),\qquad
 W_1(\bar\varpi_n,\ell)=O(\log n/n),\\
 \E W_1(\varpi_n,\ell)^2=O(\log n/n).
 \end{gathered}
\]
The first two estimates are one-dimensional quantile Riemann sums;
the last follows by coupling \(U_{(j)}\) with \(\bar u_{j,n}\), using
\eqref{eq:grid-L2-short}, and then applying the triangle inequality.
Finally
\[
 R=\sum_{k=1}^{n-1}\left(\frac1k+\frac1{n-k}\right)E_k,
\]
and, for small fixed \(\theta>0\), independence gives
\[
 \E e^{\theta R}
 =\prod_{k=1}^{n-1}
 \left[1-\theta\left(\frac1k+\frac1{n-k}\right)\right]^{-1}
 \le C_\theta n^{C_\theta}.
\]
Exponential Markov,
with a slightly smaller exponent to absorb \((1+R)^b\), proves
\eqref{eq:weighted-tail-short}.
\end{proof}

\begin{remark}[Why exponentials appear]
Opening the \(k\)-th ordered gap separates \(k\) potentials below it
from \(n-k\) potentials above it.  It therefore increases exactly
\(k(n-k)\) pairwise distances.  The gap-law energy is linear in that
opening, so its Gibbs factor is exponential.  This is the
finite-dimensional analogue of the cavity stability principle that
an exponential intensity is preserved, up to translation and
normalization, when the energies of states receive independent
additive shifts \cite{MezardParisi2003}.  The analogy concerns the
normalized ordered spacings; the potential values themselves have a
logistic empirical law. 
\end{remark}

\subsection{The first likelihood ratio}

Set
\[
 \mathcal R_n(U)=\sum_{i<j}\left[-\log\left(1-
 \frac{|U_i-U_j|}{n}\right)-\frac{|U_i-U_j|}{n}\right],
\]
with \(e^{-\mathcal R_n}=0\) when \(R\ge n\), and put
\(W_n(U)=e^{-\mathcal R_n(U)}(1-q_n^{\mathrm{cap}}(U))\).

\begin{proposition}[The first total-variation comparison]
\label{prop:first-tv-short}
With \(\overline W_n=\E_{\rm gap}W_n\),
\begin{equation}\label{eq:W-limit-short}
 \E_{\rm gap}|W_n-e^{-\zeta(2)}|
 =O\!\left(\sqrt{\frac{\log n}{n}}\right),
 \qquad \overline W_n\to e^{-\zeta(2)}.
\end{equation}
Furthermore
\begin{equation}\label{eq:A-B-RN-short}
 \frac{\dd\mathbb P_n^{\mathrm{ad}}}{\dd\mathbb P_n^{\mathrm{ref}}}
 =\frac{W_n}{\overline W_n},\qquad
 \|\mathbb P_n^{\mathrm{ad}}-\mathbb P_n^{\mathrm{ref}}\|_{\rm TV}
 =O\!\left(\sqrt{\frac{\log n}{n}}\right).
\end{equation}
For all large \(n\), the density in \eqref{eq:A-B-RN-short} is
pointwise bounded by \(2e^{\zeta(2)}\).
\end{proposition}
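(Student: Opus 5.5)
The plan has three parts: the Radon--Nikodym identity, the $L^1$ convergence of $W_n$, and the pointwise bound.

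\emph{The Radon--Nikodym identity.} Both $\mathbb P_n^{\mathrm{ad}}$ and $\mathbb P_n^{\mathrm{ref}}$ use the same conditional row kernels $\nu_{i,U}$, with the same convention on $\{Z_i=0\}$. The density ratio is therefore the ratio of the potential marginals. By \cref{lem:potential-product-short}, $\prod_iZ_i(U)$ equals $n^n\prod_{i<j}(1-|U_i-U_j|/n)_+\{1-q_n^{\mathrm{cap}}\}$. Writing $(1-x)_+=e^{-x}\exp\{-[-\log(1-x)-x]\}$ for $0\le x<1$ shows that this is $n^n e^{-\frac1n\sum_{i<j}|U_i-U_j|}W_n(U)$. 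On $\{R\ge n\}$ both sides vanish by the stated convention. Hence $\dd\mathbb P_n^{\mathrm{ad}}/\dd\mathbb P_n^{\mathrm{gap}}\propto W_n$, and normalizing gives $W_n/\overline W_n$. The total-variation bound then follows from
\[
\|\mathbb P_n^{\mathrm{ad}}-\mathbb P_n^{\mathrm{ref}}\|_{\rm TV}=\tfrac12\E_{\rm gap}|W_n/\overline W_n-1|\le \E_{\rm gap}|W_n-e^{-\zeta(2)}|/\overline W_n,
\]
once \eqref{eq:W-limit-short} is available.

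\emph{The $L^1$ convergence of $W_n$.} I split on the event $G=\{R\le A\log n\}$, with $A$ taken from \eqref{eq:weighted-tail-short}.

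On $G^c$, $0\le W_n\le 1$ because $-\log(1-x)-x\ge0$. So the contribution of $G^c$ is at most $2\Pp(G^c)\le Cn^{-M}$.

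On $G$, every $x=|U_i-U_j|/n$ satisfies $x\le A\log n/n$. A Taylor expansion gives
\[
\mathcal R_n=\frac1{2n^2}\sum_{i<j}(U_i-U_j)^2+O\Bigl(\frac{1}{n^3}\sum_{i<j}|U_i-U_j|^3\Bigr).
\]
The cubic term is $O(R^3/n)=O(\log^3n/n)$. The quadratic term equals $\frac{1}{2n}\sum_iU_i^2$, because $\sum_iU_i=0$ gives $\sum_{i<j}(U_i-U_j)^2=n\sum_iU_i^2$. By \eqref{eq:moment-short}, $\E|\frac1n\sum U_i^2-\pi^2/3|=O(\sqrt{\log n/n})$, so $\frac{1}{2n}\sum U_i^2$ is within $O(\sqrt{\log n/n})$ of $\pi^2/6=\zeta(2)$ in $L^1$.

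The map $x\mapsto e^{-x}$ is $1$-Lipschitz on $[0,\infty)$ and $\mathcal R_n\ge0$. Hence
\[
\E|e^{-\mathcal R_n}-e^{-\zeta(2)}|\1_G=O\bigl(\sqrt{\log n/n}+\log^3n/n\bigr)=O\bigl(\sqrt{\log n/n}\bigr).
\]

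It remains to handle the cap factor. Its contribution is bounded by $\E q_n^{\mathrm{cap}}$. On $G$, $q_n^{\mathrm{cap}}\le (A\log n/n)^{n-1}$, which is superpolynomially small. Combining the pieces gives the $L^1$ rate in \eqref{eq:W-limit-short}, and $\overline W_n\to e^{-\zeta(2)}$ follows immediately.

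\emph{The pointwise bound.} We have $W_n\le1$ pointwise. Since $\overline W_n\to e^{-\zeta(2)}$, for large $n$ we have $\overline W_n\ge e^{-\zeta(2)}/2$, so $W_n/\overline W_n\le 2e^{\zeta(2)}$.

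The only genuinely delicate step is the quadratic identification: getting the $L^1$ rate requires the empirical second-moment estimate from \cref{lem:gaps-short}, together with care that the Taylor remainder is controlled uniformly on $G$ and not only in expectation. Everything else is bookkeeping.
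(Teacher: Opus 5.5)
Your proposal is correct and follows essentially the paper's route. It uses the same Radon--Nikodym identification through \cref{lem:potential-product-short} with common row kernels, the same quadratic identification via $\sum_{i<j}(U_i-U_j)^2=n\sum_iU_i^2$ and the empirical second-moment bound, and the same Lipschitz, cap and normalization arguments. The only variation is cosmetic: you truncate at $R\le A\log n$ and bound the cubic Taylor remainder deterministically by $O(\log^3 n/n)$, whereas the paper truncates at $R\le n/2$ and controls the remainder $\frac{2R}{3n^2}\sum_iU_i^2$ in expectation by Cauchy--Schwarz with the second and fourth moment bounds.
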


\begin{proof}
The zero-sum identity
\(\sum_{i<j}(U_i-U_j)^2=n\sum_iU_i^2\) shows that the quadratic
Taylor term in \(\mathcal R_n\) is
\(\frac1{2n}\sum_iU_i^2\).  On \(\{R\le n/2\}\),
\[
 \left|\mathcal R_n-\frac1{2n}\sum_iU_i^2\right|
 \le \frac{2R}{3n^2}\sum_iU_i^2.
\]
Indeed, for \(0\le x\le1/2\),
\(|-\log(1-x)-x-x^2/2|\le 2x^3/3\); summing with
\(|U_i-U_j|\le R\) gives the displayed estimate.  Its expected
right side is \(O(\log n/n)\): use Cauchy--Schwarz together with the
second and fourth moment bounds in \cref{lem:gaps-short}.  Also
\(q_n^{\mathrm{cap}}\le2^{-(n-1)}\) on
\(R\le n/2\), while the complement is exponentially small by
\eqref{eq:weighted-tail-short}.  Since \(x\mapsto e^{-x}\) is
one-Lipschitz on \([0,\infty)\), \eqref{eq:moment-short} proves
\eqref{eq:W-limit-short}.

By \cref{lem:potential-product-short}, the unnormalized marginal
adaptive density is
\[
 n^n\exp\left\{-\frac1n\sum_{i<j}|U_i-U_j|\right\}W_n(U)\,\dd U.
\]
It therefore differs from the gap density precisely by \(W_n\).  The conditional
row kernel is common to both laws, proving the Radon--Nikodym formula.
Finally
\[
 2\|\mathbb P_n^{\mathrm{ad}}-\mathbb P_n^{\mathrm{ref}}\|_{\rm TV}
 =\E_{\rm gap}\left|\frac{W_n}{\overline W_n}-1\right|,
\]
and \(0\le W_n\le1\), giving both remaining assertions.
\end{proof}

The adaptive partition function will be used later:
\begin{equation}\label{eq:Zad-short}
 Z_n^{\rm ad}=n^nZ_n^{\rm gap}\overline W_n
 =\frac{n^{2n-1}}{(n-1)!}\overline W_n.
\end{equation}

\section{One row and the empirical vertex law}
\label{sec:rows-short}

\subsection{The exact first-failure representation}

Write \(D_i=A_i+U_i\) and, for \(0\le d\le n\), put
\[
 S_{i,U}(d)=\prod_{j\ne i}\rho_n(d+U_j-U_i),\qquad
 h_{i,U}(d)=\frac1n\sum_{j\ne i}\omega_n(d+U_j-U_i).
\]

This is the same construction as at the beginning of Section~\ref{sec:reference}.

\begin{lemma}[First failure]\label{lem:first-failure-short}
Conditional on \(U\) under either reference law,
\begin{equation}\label{eq:row-survival-short}
 \Pp(D_i\ge d\mid U)=
 \frac{S_{i,U}(d)-S_{i,U}(n-)}
      {S_{i,U}(0)-S_{i,U}(n-)},\qquad 0\le d<n.
\end{equation}
Equivalently, if independently for \(j\ne i\),
\[
 \Xi_{ij}\sim{\rm Unif}[-U_j,n-U_j],\qquad
 M_i=\min_{j\ne i}\Xi_{ij},
\]
then
\begin{equation}\label{eq:first-failure-short}
 A_i\stackrel d=M_i\mid\{-U_i\le M_i<n-U_i\}.
\end{equation}

If \(J_i\) is the almost surely unique index attaining this minimum,
then, for almost every \(d\) with positive row density,
\begin{equation}\label{eq:first-failure-label-short}
 \Pp(J_i=j\mid U,D_i=d)
 =\frac{\omega_n(d+U_j-U_i)}
 {\sum_{k\ne i}\omega_n(d+U_k-U_i)}.
\end{equation}
The row variables are independent conditional on \(U\).
\end{lemma}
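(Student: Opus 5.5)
Under either reference law the conditional law of \(A=(A_i)\) given \(U\) is by definition the product \(\bigotimes_i\nu_{i,U}\) (see \eqref{eq:A-law-short} and \eqref{eq:B-law-short}). Conditional independence of the rows is therefore immediate, and both laws have literally the same kernel. The work is to identify \(\nu_{i,U}\) in the three forms stated, which I will do on \(\{Z_i(U)>0\}\). On the complement the convention \(\nu_{i,U}=\delta_{-U_i}\) applies; this set is null under the adaptive law, and for the gap law the displayed ratio is read with the same convention.

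\emph{Survival function.} Substitute \(a=d-U_i\), so \(D_i=A_i+U_i\in[0,n]\). Then \(F_{i,U}(d-U_i)=S_{i,U}(d)\) and \(\kappa_{i,U}(d-U_i)/n=h_{i,U}(d)\). The function \(S_{i,U}\) is a finite product of piecewise-linear, nonincreasing, absolutely continuous factors. Hence \(S'=-hS\) almost everywhere, which is the relation \(F'=-\kappa F/n\) already used for \eqref{eq:Zi-short}. Integrating the density of \eqref{eq:row-kernel-short} from \(d\) to \(n\) gives
\[
\int_d^n h_{i,U}S_{i,U}=S_{i,U}(d)-S_{i,U}(n-).
\]
Dividing by \(Z_i/n=S_{i,U}(0)-S_{i,U}(n-)\) yields \eqref{eq:row-survival-short}. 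The only care needed is at points where a factor \(\rho_n\) vanishes or has a kink; this is why I use \(n-\) and absolute continuity rather than a pointwise derivative.

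\emph{First failure.} For \(\Xi_{ij}\sim{\rm Unif}[-U_j,n-U_j]\) and \(a\in[-U_j,n-U_j]\),
\[
\Pp(\Xi_{ij}>a)=(n-U_j-a)/n=\rho_n(a+U_j).
\]
This identity also holds for \(a<-U_j\), where both sides equal one because \(\rho_n(t)=1\) for \(t\le0\), and beyond the interval, where both vanish. By independence, \(\Pp(M_i>a)=F_{i,U}(a)\). Conditioning on \(\{-U_i\le M_i<n-U_i\}\) gives the survival \((F(a)-F(n-U_i))/(F(-U_i)-F(n-U_i))\), which matches the survival function of \(\nu_{i,U}\) computed above. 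Since survival functions determine laws, \eqref{eq:first-failure-short} follows.

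\emph{Label identity.} The \(\Xi_{ij}\) are independent with densities \(1/n\) on their supports, so ties are null and \(J_i\) is almost surely unique. The joint density of \((M_i,J_i)\) at \((a,j)\) is
\[
\frac1n\1_{[-U_j,n-U_j]}(a)\prod_{k\ne i,j}\rho_n(a+U_k).
\]
Wherever \(\rho_n(a+U_j)>0\), this equals \(\frac1n\omega_n(a+U_j)F_{i,U}(a)\). The cases \(a+U_j<0\) must be checked separately: there the indicator kills the density, and \(\omega_n(a+U_j)=0\) by definition, so the two agree. Dividing by the marginal \(\frac1n\kappa_{i,U}(a)F_{i,U}(a)\) at \(a=d-U_i\) gives \eqref{eq:first-failure-label-short} wherever the row density is positive. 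I expect this boundary bookkeeping for \(\omega_n\) at \(a_{ij}<0\) and at \(a_{ij}\to n\) to be the only delicate point. It is resolved by noting that \(\omega_n\) vanishes exactly where the uniform support of \(\Xi_{ij}\) excludes the value.
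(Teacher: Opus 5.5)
Your proposal is correct and follows essentially the paper's argument. Both compute $\Pp(M_i\ge a)=F_{i,U}(a)$, use $-F'=\kappa F/n$ together with $Z_i/n=F_{i,U}(-U_i)-F_{i,U}(n-U_i)$ to identify the conditioned first-failure law with $\nu_{i,U}$ (you match survival functions, the paper matches densities), read off the label law from the per-constraint hazards, and treat conditional independence as holding by definition. Your explicit joint density of $(M_i,J_i)$ is a slightly more careful version of the paper's hazard-times-survival step, and your remark on $\{Z_i=0\}$ covers a degenerate case the paper passes over silently.
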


\begin{proof}
Fix \(U\).  For every \(a\in\mathbb R\),
\[
 \{M_i\ge a\}
 =\bigcap_{j\ne i}\{\Xi_{ij}\ge a\}.
\]
Since the \(\Xi_{ij}\)'s are independent and
\(\Pp(\Xi_{ij}\ge a\mid U)=\rho_n(a+U_j)\),
\[
 \Pp(M_i\ge a\mid U)
 =\prod_{j\ne i}\rho_n(a+U_j)
 =F_{i,U}(a).
\]
Thus \(F_{i,U}\) is the survival function of \(M_i\).  At every point
of differentiability,
\[
 -F_{i,U}'(a)
 =\frac1n\kappa_{i,U}(a)F_{i,U}(a).
\]
Moreover, by \eqref{eq:Zi-short},
\[
 \frac{Z_i(U)}n
 =F_{i,U}(-U_i)-F_{i,U}(n-U_i).
\]
It follows that the conditional density of \(M_i\) on
\([-U_i,n-U_i)\) is
\[
 \frac{\1_{[-U_i,n-U_i)}(a)\{-F_{i,U}'(a)\}}
 {F_{i,U}(-U_i)-F_{i,U}(n-U_i)}
 =
 \frac{\1_{[-U_i,n-U_i)}(a)
 \kappa_{i,U}(a)F_{i,U}(a)}
 {Z_i(U)}.
\]
This is precisely the density of \(\nu_{i,U}\), which proves
\eqref{eq:first-failure-short}.

Now \(D_i=A_i+U_i\) and
\[
 S_{i,U}(d)=F_{i,U}(d-U_i).
\]
Therefore, for \(0\le d<n\),
\begin{align*}
 \Pp(D_i\ge d\mid U)
 &=
 \Pp\bigl(M_i\ge d-U_i
   \,\bigm|\,-U_i\le M_i<n-U_i,\ U\bigr)\\
 &=
 \frac{S_{i,U}(d)-S_{i,U}(n-)}
      {S_{i,U}(0)-S_{i,U}(n-)},
\end{align*}
which proves \eqref{eq:row-survival-short}.  Logarithmic
differentiation also gives
\[
 -S_{i,U}'(d)=h_{i,U}(d)S_{i,U}(d)
\]
almost everywhere.

At level \(d\), the hazard contributed by the constraint indexed by
\(j\) is
\[
 \frac{\1_{\{0\le d+U_j-U_i<n\}}}
      {n-d-U_j+U_i}
 =\frac1n\omega_n(d+U_j-U_i).
\]
The density that \(j\) causes the first failure is this hazard
multiplied by the common survival factor.  Conditional on failure at
level \(d\), the survival factor and the conditioning normalization
cancel.  Dividing the \(j\)-th hazard by the sum of all hazards proves
\eqref{eq:first-failure-label-short}.

Finally, under both reference laws the conditional distribution of
\(A\) given \(U\) is the product kernel
\(\bigotimes_i\nu_{i,U}\).  Hence the row variables are independent
conditional on \(U\).
\end{proof}

\begin{lemma}[Uniform row estimates]\label{lem:row-bounds-short}
If \(R=\osc U\le(n-2)/4\), then uniformly in \(i\),
\[
 \left\|\frac{\dd\mathcal L(D_i\mid U)}{\dd d}\right\|_\infty\le C,
 \qquad c\le\Var(D_i\mid U)\le C(R+1)^2,
\]
and for fixed \(p\ge1\),
\begin{equation}\label{eq:row-moment-short}
 \E(|D_i-\E(D_i\mid U)|^p\mid U)\le C_p(R+1)^p,
 \qquad
 \Pp(D_i\ge R+s\mid U)\le Ce^{-s/2}.
\end{equation}
In particular, for every \(M\), some \(B_M\) satisfies
\begin{equation}\label{eq:edge-event-short}
 \Pp_{\mathbb P_n^{\mathrm{ref}}}\{R+\max_iD_i>B_M\log n\}\le C_Mn^{-M}.
\end{equation}
\end{lemma}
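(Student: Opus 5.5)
The whole argument runs through the first-failure representation of \cref{lem:first-failure-short}. Given \(U\), the row has survival function \(S_{i,U}(d)\) with hazard \(h_{i,U}(d)=\frac1n\sum_{j\ne i}\omega_n(d+U_j-U_i)\), truncated at the upper endpoint. We first show that the hazard is comparable to a constant once \(d\) exceeds the spread of the field.

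\textbf{Hazard bounds.} For \(0\le d\le n/2\), every active term has \(\omega_n\in[1,2]\). The constraint \(j\) is active, meaning \(d+U_j-U_i\ge0\), exactly when \(U_j\ge U_i-d\).
\begin{itemize}
\item For \(d\ge R\) all \(n-1\) constraints are active, so \(h_{i,U}(d)\in[(n-1)/n,\,2]\).
\item For general \(d\) we still have \(h\le 2\).
\item Hence for \(R\le d\le n/2\), \(S_{i,U}(d)\le S_{i,U}(R)\,e^{-(d-R)(n-1)/n}\le e^{-(d-R)/2}\) for \(n\ge2\).
\end{itemize}

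\textbf{Normalization.} The denominator \(S_{i,U}(0)-S_{i,U}(n-)\) must be bounded below.
\begin{itemize}
\item \(S_{i,U}(0)=\prod_{j\ne i}\rho_n(U_j-U_i)\ge(1-R/n)^{n}\ge e^{-2R}\)-type bounds are too weak when \(R\sim\log n\).
\item Instead, compare \(S(0)\) with \(S(d)\) directly. The numerator \(S(d)-S(n-)\) is at most \(S(d)\), and \(S(d)/S(0)=\exp(-\int_0^d h)\) because \(S\) is absolutely continuous with log-derivative \(-h\).
\item It therefore suffices to show \(S(n-)\le\frac12 S(0)\). Since \(S(n-)\le S(n/2)\), and \(S(n/2)/S(0)\le e^{-(n/2-R)/2}\le\frac12\) when \(R\le(n-2)/4\), this holds.
\end{itemize}
The ratio \(\Pp(D_i\ge d\mid U)\) is then at most \(2\exp(-\int_0^dh)\). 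This yields the tail bound \(\Pp(D_i\ge R+s\mid U)\le 2e^{-s/2}\), valid for \(R+s\le n/2\). Beyond \(n/2\) the bound is trivial from monotonicity, since \(e^{-(n/2-R)/2}\) is already tiny, so we can take \(C=2e\) after adjusting.

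\textbf{Density bound.} The conditional density is \(h(d)S(d)/(S(0)-S(n-))\le 2h(d)\cdot S(d)/S(0)\le 4\). Near \(d\to n\), \(\omega_n\) blows up, but \(\omega_n(a)\rho_n(a)=1\) on the active range. So \(hS\le \frac1n\sum_j\prod_{k\ne j}\rho_n\le (n-1)/n\) holds globally, since each factor is at most 1. This gives \(\|\cdot\|_\infty\le 2\).

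\textbf{Moments and variance.} The moment bound follows by integrating the tail: \(\E(D_i^p\mid U)\le C_p(R+1)^p\), and centering costs at most a factor \(2^p\). The lower bound \(\Var(D_i\mid U)\ge c\) follows because the density is bounded by \(2\): a law on \(\R\) with density at most \(2\) has variance at least \(1/48\), the uniform-on-an-interval-of-length-\(1/2\) extremal case.

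\textbf{Edge event.} For \eqref{eq:edge-event-short}, split on \(\{R>A\log n\}\), which has probability at most \(Cn^{-M}\) by \eqref{eq:weighted-tail-short}; this also covers \(R>(n-2)/4\). On the complement, take a union bound over the \(n\) rows of the tail estimate with \(s=2(M+1)\log n\).

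The main obstacle is controlling the upper-endpoint subtraction \(S(n-)\) and the \(\omega_n\) singularity near \(n\). Both are handled by the restriction \(R\le(n-2)/4\) and the identity \(\omega_n\rho_n=1\).
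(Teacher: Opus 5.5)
Your overall plan matches the paper's: first-failure survival, a hazard lower bound beyond \(R\), control of \(S_{i,U}(n-)\) relative to \(S_{i,U}(0)\), a bounded density giving the variance lower bound, and a union bound for the edge event. However, the density step is wrong as written.

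\textbf{The density bound.} Your first chain rests on \(h_{i,U}\le2\), which is false. Already for \(d\le n/2\) an active argument \(d+U_j-U_i\) can reach \(n/2+R\), so \(\omega_n\) can be close to \(4\). Near \(\tau_i=\min_{j\ne i}(n+U_i-U_j)\) the hazard blows up, as you then concede.

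Your fallback \(h_{i,U}S_{i,U}\le(n-1)/n\) is an \emph{absolute} bound, whereas the density is \(h_{i,U}S_{i,U}/\{S_{i,U}(0)-S_{i,U}(n-)\}\). Dividing by \(S_{i,U}(0)/2\) yields only \(2/S_{i,U}(0)\), and \(S_{i,U}(0)\) is not bounded below, as you noted yourself in the normalization step. For the row carrying the smallest potential, \(\log S_{i,U}(0)\approx-n^{-1}\sum_j(U_j-U_i)=U_i\), which is about \(-\log n\) under the gap law. For general \(U\) with \(R\le(n-2)/4\) it can be exponentially small in \(n\). So the claimed bound of \(2\) does not follow.

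What is needed is a relative bound \(h_{i,U}S_{i,U}\le CS_{i,U}(0)\). The paper gets it by monotonicity. First, \(g=h_{i,U}S_{i,U}\le2S_{i,U}(0)\) on \([0,R]\). On \([R,\tau_i)\) all constraints are active, so with \(b_{ij}=\omega_n(d+U_j-U_i)/n\) one has \(h'=\sum_jb_{ij}^2\le h^2\) and \(g'=S(h'-h^2)\le0\).

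Your identity \(\omega_n\rho_n=1\) also works if you compare each remaining factor with its value at \(d=0\) rather than with \(1\). This gives \(\prod_{k\ne i,j}\rho_n(d+U_k-U_i)\le S_{i,U}(0)/\rho_n(U_j-U_i)\le S_{i,U}(0)/(1-R/n)\). Hence \(h_{i,U}S_{i,U}\le\frac43S_{i,U}(0)\) for every \(d\), and the density is at most \(8/3\). This is a shorter route than the paper's.

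\textbf{The tail bound.} There is a smaller hole here. For \(R+s>n/2\), monotonicity gives only \(\Pp(D_i\ge R+s\mid U)\le2e^{-(n/2-R)/2}\). Comparing this with \(Ce^{-s/2}\) for \(s\) near \(n-R\) forces \(C\gtrsim e^{n/4}\), so no \(n\)-free constant comes out, and ``take \(C=2e\) after adjusting'' is unjustified.

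The fix, as in the paper, is that the lower bound \(h_{i,U}\ge(n-1)/n\) holds on all of \([R,\tau_i)\), where \(\tau_i\ge n-R\). There every constraint is active and \(\omega_n\ge1\), and \(S_{i,U}=0\) beyond \(\tau_i\). Your cutoff at \(n/2\) was only needed for the false upper bound \(\omega_n\le2\).

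With these two repairs, the remaining steps are fine:
\begin{itemize}
\item the moments follow by integrating the tail;
\item \(\Var\ge1/(12K^2)\) for a density bounded by \(K\), which is sharper than the paper's \(1/(32K_0^2)\);
\item the union bound on \(\{R\le A\log n\}\) gives the edge event.
\end{itemize}
The moment and edge-event bounds in fact only use the tail on \([R,n/2]\) plus a crude estimate on \([n/2,n]\).
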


\begin{proof}
Write
\[
 b_{ij}(d)=
 \frac{\1_{\{0\le d+U_j-U_i<n\}}}
      {n-d-U_j+U_i},
 \qquad
 h_{i,U}(d)=\sum_{j\ne i}b_{ij}(d).
\]
If \(d\le R\), every nonzero term has denominator at least
\[
 n-d-U_j+U_i\ge n-2R\ge\frac{n+2}{2}.
\]
Consequently,
\[
 h_{i,U}(d)\le\frac{2(n-1)}{n+2}<2.
\]

Now define
\[
 \tau_i=\min_{j\ne i}\{n+U_i-U_j\}.
\]
This is the first value of \(d\) at which one of the factors in
\(S_{i,U}(d)\) becomes zero.  Since
\[
 \tau_i\ge n-R>R,
\]
the survival is positive at \(R\).  
For \(R\le d<\tau_i\), one has
\[
 0\le d+U_j-U_i<n,\qquad j\ne i,
\]
and therefore
\[
 \rho_n(d+U_j-U_i)
 =1-\frac{d+U_j-U_i}{n}.
\]
Thus all \(n-1\) constraints contribute to the hazard, and each
contribution is at least \(1/n\):
\[
 h_{i,U}(d)\ge\frac{n-1}{n}\ge\frac12.
\]
For \(d\ge\tau_i\), at least one factor in the product defining
\(S_{i,U}(d)\) is zero, so \(S_{i,U}(d)=0\).  It follows, in both
cases, that
\[
 \frac{S_{i,U}(R+s)}{S_{i,U}(R)}
 \le e^{-s/2},\qquad s\ge0.
\]
Taking \(s=n-R\) and using \(S_{i,U}(R)\le S_{i,U}(0)\) gives
\[
 \frac{S_{i,U}(n-)}{S_{i,U}(0)}
 \le e^{-(n-R)/2}.
\]
Since \(R\le(n-2)/4\),
\[
 \frac{n-R}{2}\ge\frac{3n+2}{8}\ge1,
\]
and therefore
\[
 S_{i,U}(0)-S_{i,U}(n-)
 \ge(1-e^{-1})S_{i,U}(0).
\]

Using the exact survival formula
\eqref{eq:row-survival-short}, we obtain
\[
 \Pp(D_i\ge R+s\mid U)
 \le\frac{e^{-s/2}}{1-e^{-1}},\qquad s\ge0.
\]
Together with the trivial bound \(\Pp(D_i\ge d\mid U)\le1\) for
\(d\le R\), this yields
\[
 \E(D_i^p\mid U)
 =p\int_0^\infty d^{p-1}\Pp(D_i>d\mid U)\,\dd d
 \le C_p(R+1)^p.
\]
The inequality
\[
 |D_i-\E(D_i\mid U)|^p
 \le2^{p-1}\{D_i^p+\E(D_i\mid U)^p\}
\]
then gives the centered moment bound in
\eqref{eq:row-moment-short}, including the required upper variance
bound.

It remains to bound the row density.  Put
\[
 g_{i,U}(d)=h_{i,U}(d)S_{i,U}(d).
\]
Before \(R\), the preceding estimates give
\[
 g_{i,U}(d)\le2S_{i,U}(0).
\]
On any interval where the active set is fixed,
\[
 h_{i,U}'(d)=\sum_{j\ne i}b_{ij}(d)^2
 \le h_{i,U}(d)^2.
\]
Since \(S_{i,U}'=-h_{i,U}S_{i,U}\),
\[
 g_{i,U}'(d)
 =S_{i,U}(d)
   \{h_{i,U}'(d)-h_{i,U}(d)^2\}
 \le0.
\]
After \(R\), no new lower threshold can enter, so \(g_{i,U}\) is
nonincreasing until \(\tau_i\).  At \(\tau_i\), the survival becomes
zero, and \(g_{i,U}\) is zero thereafter.  Thus
\[
 g_{i,U}(d)\le2S_{i,U}(0)
\]
throughout the support.  Dividing by
\[
 S_{i,U}(0)-S_{i,U}(n-)
 \ge(1-e^{-1})S_{i,U}(0)
\]
shows that the normalized row density is bounded by
\[
 K_0:=\frac2{1-e^{-1}}<3.2.
\]

An interval of radius \(1/(4K_0)\) has probability at most \(1/2\)
under any density bounded by \(K_0\).  Applying this to the interval
centered at \(\E(D_i\mid U)\) gives
\[
 \Var(D_i\mid U)
 \ge\frac1{32K_0^2}>\frac1{330},
\]
which proves the lower variance bound.

Finally, choose \(A\) so that
\[
 \Pp_{\mathbb P_n^{\mathrm{ref}}}\{R>A\log n\}
 =O(n^{-M-2}).
\]
On \(\{R\le A\log n\}\), the row tail bound and a union bound give
\[
 \Pp\{\max_iD_i>R+B\log n\mid U\}
 \le Cn^{1-B/2}.
\]
Choosing \(B>2(M+1)\) and then enlarging the constant multiplying
\(\log n\) proves \eqref{eq:edge-event-short}.
\end{proof}

\subsection{The empirical vertex type}

Let
\[
 \varpi_n=\frac1n\sum_i\delta_{U_i},\qquad
 L(u)=\frac1{1+e^{-u}}, \qquad \ell(u)=\frac{e^{-u}}{(1+e^{-u})^2}.
\]

\begin{lemma}[Uniform local row limit]\label{lem:compact-row-short}
For every fixed \(K<\infty\), under \(\mathbb P_n^{\mathrm{ref}}\),
\begin{equation}\label{eq:compact-row-short}
 \max_{i:|U_i|\le K}\ \sup_{0\le d\le K}
 \left|\Pp(D_i\ge d\mid U)-\frac{L(U_i-d)}{L(U_i)}\right|
 \longrightarrow0
\end{equation}
in probability.
\end{lemma}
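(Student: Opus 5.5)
Our plan is to start from the exact survival formula \eqref{eq:row-survival-short} and show that \(\log S_{i,U}(d)-\log S_{i,U}(0)\) converges uniformly to \(\log L(U_i-d)-\log L(U_i)\). The boundary terms \(S_{i,U}(n-)\) are handled first. On the event \(\{R\le A\log n\}\), which has probability tending to one by \eqref{eq:weighted-tail-short}, the proof of \cref{lem:row-bounds-short} gives \(S_{i,U}(n-)\le e^{-(n-R)/2}S_{i,U}(0)\). Hence
\[
 \Pp(D_i\ge d\mid U)=\frac{S_{i,U}(d)}{S_{i,U}(0)}+O(e^{-n/3})
\]
uniformly in \(i\) and \(d\).

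Next we write
\[
 \log\frac{S_{i,U}(d)}{S_{i,U}(0)}=-\int_0^d h_{i,U}(s)\dd s .
\]
For \(d\le K\) and \(R\le A\log n\), every active denominator \(n-s-U_j+U_i\) equals \(n(1+O(\log n/n))\). Therefore
\[
 h_{i,U}(s)=\frac1n\#\{j\ne i:U_j\ge U_i-s\}\,(1+O(\log n/n))
 =\bigl(1-F_{\varpi_n}((U_i-s)-)\bigr)+O(\log n/n),
\]
where \(F_{\varpi_n}\) is the empirical distribution function of the potentials. Integrating, and using the substitution \(v=U_i-s\), gives
\[
 \log\frac{S_{i,U}(d)}{S_{i,U}(0)}=-\int_{U_i-d}^{U_i}\bigl(1-F_{\varpi_n}(v)\bigr)\dd v+O(K\log n/n).
\]
This holds uniformly over all \(i\) and all \(d\le K\). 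The limiting identity is \(\frac{\dd}{\dd v}\log L(v)=1-L(v)\). Consequently
\[
 \int_{U_i-d}^{U_i}\bigl(1-L(v)\bigr)\dd v=\log L(U_i)-\log L(U_i-d),
\]
which yields exactly the target \(L(U_i-d)/L(U_i)\).

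The remaining task, and the step requiring care, is to bound
\[
 \sup_{|u|\le K,\,0\le d\le K}\Bigl|\int_{u-d}^{u}\bigl(F_{\varpi_n}(v)-L(v)\bigr)\dd v\Bigr|.
\]
This is at most \(\int_{\R}|F_{\varpi_n}-L|\dd v=W_1(\varpi_n,\ell)\), and \(W_1(\varpi_n,\ell)\to0\) in probability by \eqref{eq:W1-short}. Because this single bound is uniform over all \(u\) and \(d\), the restriction to \(i\) with \(|U_i|\le K\) enters only to keep the limit ratio well behaved: \(L(U_i)\ge L(-K)>0\). The bound also makes the map from log-ratios to ratios Lipschitz, since both ratios lie in \([0,1]\).

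Combining the three error terms gives \eqref{eq:compact-row-short}:
\begin{itemize}
 \item the boundary correction, of order \(e^{-n/3}\);
 \item the denominator approximation, of order \(O(K\log n/n)\);
 \item the \(W_1\) term, of order \(O_{\Pp}(\sqrt{\log n/n})\).
\end{itemize}

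One point needs checking. The indicator \(\1_{\{0\le s+U_j-U_i<n\}}\) counts \(j\) with \(U_j\ge U_i-s\), which includes \(j=i\) only through the excluded index. The upper cutoff at \(n\) is never reached when \(R\le A\log n\). The resulting \(1/n\) discrepancies are absorbed into the error terms.
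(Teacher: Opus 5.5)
Your proposal is correct and follows essentially the paper's proof. Your hazard integral $\int_{U_i-d}^{U_i}(1-F_{\varpi_n}(v))\,\dd v$ is exactly the hinge average $\frac1n\sum_j\{(d+U_j-U_i)_+-(U_j-U_i)_+\}$, which the paper compares with $\log\{S_{i,U}(d)/S_{i,U}(0)\}$ and then with the logistic integral at cost $W_1(\varpi_n,\ell)+K/n$; the only cosmetic difference is that the paper disposes of the upper endpoint through $q_n^{\mathrm{cap}}\le(A\log n/n)^{n-1}$ in the maximum row rather than through your bound $S_{i,U}(n-)\le e^{-(n-R)/2}S_{i,U}(0)$, both on $\{R\le A\log n\}$.
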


\begin{proof}
On \(R\le A\log n\), uniformly in the displayed indices,
\[
 \log\frac{S_{i,U}(d)}{S_{i,U}(0)}
 =-\frac1n\sum_{j\ne i}
 \{(d+U_j-U_i)_+-(U_j-U_i)_+\}
 +O\!\left(\frac{(R+K)^2}{n}\right).
\]
The difference of hinge functions
$x \mapsto (d+x)_+ - x_+$ is one-Lipschitz.  Hence,
 \begin{align*}
 \sup_{\substack{1\le i\le n,\ |U_i|\le K\\0\le d\le K}}
 \left|
 \frac1n\sum_{j\ne i}
 \bigl\{(d+U_j-U_i)_+-(U_j-U_i)_+\bigr\}
 -
 \int_{\mathbb R}
 \bigl\{(d+v-U_i)_+-(v-U_i)_+\bigr\}
 \ell(v)\,\dd v
 \right|
 \\
\qquad \le W_1(\varpi_n,\ell)+\frac Kn
\end{align*}
by definition of $W_1$, the 1-Wasserstein distance.

By \cref{lem:gaps-short} this tends to zero in probability, while
the logistic integral equals
\(\log L(U_i)-\log L(U_i-d)\).  Exponentiation is uniform because
\(L(U_i)\ge L(-K)>0\).

The upper cap affects at most the maximum row.  Writing
\(r_i(d)=S_{i,U}(d)/S_{i,U}(0)\), its exact survival is
\((r_i(d)-q_n^{\mathrm{cap}})/(1-q_n^{\mathrm{cap}})\), whose discrepancy from
\(r_i(d)\) is at most
\(q_n^{\mathrm{cap}}/(1-q_n^{\mathrm{cap}})\).  On the same event,
\(q_n^{\mathrm{cap}}\le(A\log n/n)^{n-1}\).  Finally let \(A\) be large and
use \eqref{eq:weighted-tail-short}.
\end{proof}

We shall also use the following deterministic fact.  
\begin{lemma}[Weak convergence gives uniform cdf convergence]
\label{lem:cdf-uniform-short}
If probability measures \(\mu_n\) on \(\R\) converge weakly to a law
with continuous cdf \(F\), then their cdfs \(F_n\) satisfy
\(\sup_t|F_n(t)-F(t)|\to0\).  The same conclusion holds in
probability for random \(\mu_n\) converging weakly in probability.
\end{lemma}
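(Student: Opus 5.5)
This is Pólya's theorem, and I would prove it by a monotone-grid argument.

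\emph{Deterministic case.} Fix \(\varepsilon>0\). Since \(F\) is a continuous cdf with limits \(0\) and \(1\) at \(\mp\infty\), I can choose points \(-\infty=t_0<t_1<\cdots<t_{k-1}<t_k=+\infty\) such that \(F(t_{m})-F(t_{m-1})\le\varepsilon\) for each \(m\). The choice \(t_m=F^{-1}(m\varepsilon)\) (generalized inverse) works, with \(k\) of order \(1/\varepsilon\). The law with cdf \(F\) has no atoms, so every finite \(t_m\) is a continuity point, and weak convergence gives \(F_n(t_m)\to F(t_m)\) for the finitely many interior points. At the endpoints I use the conventions \(F_n(\pm\infty)=F(\pm\infty)\in\{0,1\}\).

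For \(t\in[t_{m-1},t_m]\), monotonicity of \(F_n\) and \(F\) gives the sandwich
\[
F_n(t)-F(t)\le F_n(t_m)-F(t_{m-1})\le |F_n(t_m)-F(t_m)|+\varepsilon,
\]
and symmetrically
\[
F_n(t)-F(t)\ge F_n(t_{m-1})-F(t_m)\ge -|F_n(t_{m-1})-F(t_{m-1})|-\varepsilon .
\]
Hence
\[
\sup_t|F_n-F|\le\max_m|F_n(t_m)-F(t_m)|+\varepsilon .
\]
The maximum is over finitely many points, so it tends to zero. Letting \(\varepsilon\downarrow0\) gives the first claim.

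\emph{Random case.} I use the subsequence characterization of convergence in probability. The statement "\(\mu_n\Rightarrow\mu\) in probability" means that \(d(\mu_n,\mu)\to0\) in probability for some metric \(d\) metrizing weak convergence on probability measures on \(\R\), for instance Lévy–Prokhorov or bounded-Lipschitz. Given any subsequence, I extract a further subsequence along which \(d(\mu_n,\mu)\to0\) almost surely. On that almost-sure event the deterministic case gives \(\sup_t|F_n(t)-F(t)|\to0\).

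A sequence of real random variables converges to zero in probability exactly when every subsequence has a further subsequence converging almost surely. Applying this to \(\sup_t|F_n-F|\) concludes the proof.

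Measurability of \(\sup_t|F_n(t)-F(t)|\) needs one remark. Since \(F_n\) is right-continuous and \(F\) is continuous, the supremum equals the supremum over rational \(t\), so it is a random variable.

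A more direct alternative avoids the subsequence step. Apply the grid bound above pathwise:
\[
\sup_t|F_n-F|\le\max_m|F_n(t_m)-F(t_m)|+\varepsilon .
\]
For each of the finitely many continuity points \(t_m\), weak convergence in probability yields \(F_n(t_m)\to F(t_m)\) in probability. This follows, for example, by sandwiching \(\1_{(-\infty,t_m]}\) between bounded Lipschitz functions differing only on an interval of \(F\)-mass at most \(\varepsilon\). A finite maximum of terms converging in probability also converges in probability, and \(\varepsilon\) was arbitrary.

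There is no real obstacle here. The only points needing care are the infinite endpoints of the grid and making precise what "weakly in probability" means, and both are handled above.
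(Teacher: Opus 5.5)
Your proof is correct and follows essentially the same route as the paper. Both use a finite grid of continuity points with \(F\)-increments at most \(\varepsilon\), the monotonicity sandwich between adjacent grid values, and the subsequence criterion for the random case. Your bound \(\max_m|F_n(t_m)-F(t_m)|+\varepsilon\) replaces the paper's \(3\varepsilon\). Your direct alternative, convergence in probability at the finitely many grid points, is also valid and avoids the subsequence step.
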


\begin{proof}
Given \(\varepsilon>0\), choose finitely many continuity points
\(-\infty=t_0<t_1<\cdots<t_m=\infty\) so that
\(F(t_{r+1})-F(t_r)\le\varepsilon\).  Monotonicity traps both cdfs
between their values at adjacent grid points, so convergence on this
finite grid gives \(\sup_t|F_n(t)-F(t)|\le3\varepsilon\) eventually.
For random measures, apply the deterministic assertion along every
subsequence on which weak convergence holds almost surely; the usual
subsequence criterion yields convergence in probability.
\end{proof}

\begin{lemma}[Empirical types]\label{lem:types-short}
Under \(\mathbb P_n^{\mathrm{ref}}\), the empirical law of \((A_i,U_i)\) converges
weakly in probability to
\begin{equation}\label{eq:mustar-short}
 \mu_*(\dd a\,\dd u)=
 \1_{\{a+u\ge0\}}\frac{\ell(a)\ell(u)}{L(u)}\,\dd a\,\dd u.
\end{equation}
Moreover, if
\[
 N_i(d)=\#\{j\ne i:U_j\ge U_i-d\},\qquad
 N_i=N_i(D_i)=\#\{j\ne i:A_i+U_j\ge0\},
\]
then
\begin{equation}\label{eq:degree-uniform-short}
 \sup_i\left|\frac{N_i+1}{n}-L(A_i)\right|\longrightarrow0
\end{equation}
in probability.  The empirical law of \(N_i/n\) therefore converges
to the law on \((0,1)\) with density
\begin{equation}\label{eq:rho-short}
 \rho(x)=-\log(1-x).
\end{equation}
\end{lemma}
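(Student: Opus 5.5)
We prove the three assertions of \cref{lem:types-short} in order: the joint $(A,U)$ limit, the uniform degree statement, and the degree density.

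\emph{Empirical law of $(A_i,U_i)$.} The empirical law of $U$ alone converges to $\ell$ by \cref{lem:gaps-short}. Given $U$, the $D_i$ are independent by \cref{lem:first-failure-short}. So it suffices to prove, for each fixed bounded Lipschitz test function $f(a,u)$,
\[
 \frac1n\sum_i f(A_i,U_i)\longrightarrow\int f\,\dd\mu_*
\]
in probability. We split this into a conditional-mean term and a fluctuation term.
\begin{itemize}
\item The fluctuation $n^{-1}\sum_i\{f(A_i,U_i)-\E(f(A_i,U_i)\mid U)\}$ has conditional variance at most $\|f\|_\infty^2/n$, by conditional independence.
\item For the conditional mean, fix $K$ and discard rows with $|U_i|>K$. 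Their proportion is at most $\ell(\{|u|>K\})+o_{\Pp}(1)$.
\item On the remaining rows, the tail bound \eqref{eq:row-moment-short}, together with $R=O_{\Pp}(\log n)$, is not uniform enough. Instead we use \cref{lem:compact-row-short}: the limiting survival $L(U_i-d)/L(U_i)$ is a proper law, so the mass beyond $d=K'$ is small uniformly in $|U_i|\le K$ once $K'$ is large. Since $\Pp(D_i\ge K'\mid U)\to L(U_i-K')/L(U_i)$, truncating $D_i$ at $K'$ costs little.
\end{itemize}
On the truncated region, \eqref{eq:compact-row-short} states that $\E(f(A_i,U_i)\mid U)=\E(f(D_i-U_i,U_i)\mid U)$ is uniformly close to $g(U_i)$, where
\[
 g(u)=\int_0^\infty f(d-u,u)\,\frac{\ell(u-d)}{L(u)}\,\dd d .
\]
This uses integration by parts against a Lipschitz $f$, or approximation of $f$ by step functions in $d$ together with \cref{lem:cdf-uniform-short}. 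Averaging $g(U_i)$ over the empirical $U$-law and applying its $W_1$ convergence gives $\int g\,\ell\,\dd u$. Substituting $a=d-u$ gives $\int f\,\dd\mu_*$, because $\ell(u-d)=\ell(d-u)=\ell(a)$. Letting $K,K'\to\infty$ finishes this step.

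\emph{Uniform degree statement.} Note that $N_i(d)+1=\#\{j:U_j\ge U_i-d\}=n\{1-F_{\varpi_n}((U_i-d)-)\}$. With $d=D_i$ we have $U_i-D_i=-A_i$, so
\[
 \frac{N_i+1}{n}=1-F_{\varpi_n}(-A_i-)\quad\text{up to a } 1/n \text{ error}.
\]
The empirical $U$-law converges weakly to $\ell$, whose cdf $L$ is continuous. By \cref{lem:cdf-uniform-short}, $\sup_t|F_{\varpi_n}(t)-L(t)|\to0$ in probability, and the same holds for left limits. Since $1-L(-a)=L(a)$, this gives \eqref{eq:degree-uniform-short} uniformly in $i$, with no restriction on $A_i$. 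This step is purely deterministic given the cdf convergence.

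\emph{Degree density.} By the two previous steps, the empirical law of $N_i/n$ is within $o_{\Pp}(1)$ in Lévy distance of the pushforward of the $A$-marginal of $\mu_*$ under $L$. That marginal has density
\[
 \ell(a)\int_{-a}^\infty\frac{\ell(u)}{L(u)}\,\dd u=\ell(a)\bigl[\log L(u)\bigr]_{-a}^{\infty}=-\ell(a)\log L(-a).
\]
Change variables to $x=L(a)$, so that $\dd x=\ell(a)\,\dd a$ and $L(-a)=1-x$. The density becomes $-\log(1-x)$, which is \eqref{eq:rho-short}, and it integrates to one.

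The main obstacle is the first step. Specifically, it is controlling the rows with large $|U_i|$ or large $D_i$, where \cref{lem:compact-row-short} gives no information. This must be handled by tightness of the limiting marginal and not by the crude $(R+1)$ bounds, which grow like $\log n$.
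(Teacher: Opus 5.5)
Your proof is correct. The degree and density steps are the paper's own: $(N_i+1)/n$ is rewritten as an empirical upper-tail probability at $-A_i$, the cdfs converge uniformly by \cref{lem:cdf-uniform-short}, and the variables are changed via $x=L(a)$.

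The genuine difference is how you get tightness of the $D$-coordinate when extending \cref{lem:compact-row-short} to bounded continuous tests.

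\emph{The paper's route.} It proves a separate row-averaged tail bound. Put $V_n=n^{-1}\sum_iU_i^2$. On $\{V_n\le K^2/64\}$, at least $3n/4$ potentials exceed $-K/4$. An elementary product inequality then gives $S_{i,U}(K)/S_{i,U}(0)\le e^{-K/4}$ for every non-maximal row with $U_i\le K/4$. Hence $n^{-1}\sum_i\Pp(D_i>K\mid U)$ is at most $e^{-K/4}$, plus the empirical mass of $\{U_i>K/4\}$, plus exceptional events.

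\emph{Your route.} You discard the rows with $|U_i|>K$ wholesale, using only boundedness of the test function and tightness of the empirical $U$-law. For the rows with $|U_i|\le K$, you read the tail directly off \cref{lem:compact-row-short} applied on a larger window $K'\ge K$, together with $L(U_i-K')/L(U_i)\le L(K-K')/L(-K)$.

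\emph{Comparison.} Your route is more economical: it needs nothing beyond the preceding lemmas, provided the limits are taken in the order $n\to\infty$, then $K'\to\infty$, then $K\to\infty$. The paper's route buys a quantitative exponential tail bound for the empirical law of $D$. That bound rests only on second-moment control of $U$, not on the fixed-window logistic limit, and it covers the rows with very negative $U_i$ as well.

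Two small points. First, the identity $(N_i+1)/n=1-F_{\varpi_n}\bigl((-A_i)-\bigr)$ is exact, so no $1/n$ error is needed. Second, in the density step the L\'evy-distance comparison should be made with the empirical law of $L(A_i)$. That law then converges to the pushforward of the $A$-marginal of $\mu_*$ by the continuous mapping theorem.
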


\begin{proof}
To establish tightness, we combine second-moment control of the
potential field with an averaged conditional tail bound for \(D_i\).
Put
\[
 V_n=\frac1n\sum_iU_i^2.
\]
  On
\(\{V_n\le K^2/64,R<n\}\), at least \(3n/4\) targets have
\(U_j\ge-K/4\).  Indeed,
\[
 \#\{j:U_j<-K/4\}
 \le\frac{16}{K^2}\sum_jU_j^2
 =\frac{16nV_n}{K^2}\le\frac n4.
\]
We also record the elementary product inequality used next.  For
\(x\in(-n,n)\), set
\[
 a=\frac{(K+x)_+}{n},\qquad b=\frac{x_+}{n}.
\]
If the numerator residual length is nonzero, then
\(0\le b\le a<1\), and
\[
 \frac{1-a}{1-b}
 \le1-(a-b)\le e^{-(a-b)};
\]
the first inequality follows after multiplication by \(1-b\) from
\(b(a-b)\ge0\).  If the numerator is zero, the desired bound is
automatic.  Multiplying this inequality over the row explains the
direction of the following exponential bound.  Thus, if
\(U_i\le K/4\), the product survival at \(K\) for a nonmaximum row
satisfies
\[
 \frac{S_{i,U}(K)}{S_{i,U}(0)}
 \le\exp\left\{-\frac1n\sum_{j\ne i}
 \bigl[(K+U_j-U_i)_+-(U_j-U_i)_+\bigr]\right\}
 \le e^{-K/4}.
\]
For the last inequality, after omitting \(j=i\), at least \(n/2\)
of the preceding \(3n/4\) targets remain (for \(n\ge4\)); each has
\(U_j-U_i\ge-K/2\), so its hinge increment is at least \(K/2\).
The finitely many smaller \(n\) are harmless.
The possible maximum row is isolated as the first term below.  Hence
\[
 \frac1n\sum_i\Pp(D_i>K\mid U)
 \le \frac1n+e^{-K/4}
 +\frac1n\sum_i\1_{\{U_i>K/4\}}
 +\1_{\{V_n>K^2/64\}}+\1_{\{R\ge n\}}.
\]
The expectation of the right side tends to zero as first
\(n\to\infty\) and then \(K\to\infty\).  Thus the compact convergence
of \cref{lem:compact-row-short} extends to bounded continuous tests.
Conditional independence makes
the conditional variance of each empirical average \(O(1/n)\), and
differentiating the limiting survival gives
\[
 \Pp(D\in\dd d\mid U=u)=\frac{\ell(u-d)}{L(u)}\1_{\{d\ge0\}}\dd d.
\]
The change of variables \(a=d-u\), and logistic symmetry, yield
\eqref{eq:mustar-short}.

The full-diagonal active proportion is exactly
\[
 \frac{N_i+1}{n}=\frac1n\sum_j\1_{\{U_j\ge-A_i\}}.
\]
By \cref{lem:gaps-short}, the empirical measures of \(U\) converge
weakly in probability to the logistic law.  The deterministic
\cref{lem:cdf-uniform-short} therefore gives uniform convergence of
their cdfs despite the dependence among the \(U_i\)'s.  Since the gap
law has no ties and \(1-L(-a)=L(a)\), evaluation at the random points
\(-A_i\) proves
\eqref{eq:degree-uniform-short}.  Under \(\mu_*\), the change
\(x=L(a)\) gives the \(A\)-marginal
\[
 \ell(a)\left\{\int_{-a}^{\infty}\frac{\ell(u)}{L(u)}\,\dd u\right\}
 \dd a
 =-\ell(a)\log\{1-L(a)\}\,\dd a,
\]
which becomes \(-\log(1-x)\dd x\).  This proves
\eqref{eq:rho-short}.
\end{proof}

The inverse degree is singular at zero, so weak convergence alone is
insufficient.  The following estimate is the quantitative endpoint
input for the tree factor.

\subsection{Sparse degrees and the conditional row CLT}

Whenever \(Z_i(U)>0\), the factor \(\kappa_{i,U}(a)\) in the
density of \(\nu_{i,U}\) implies that, almost surely,
\[
 0\le A_i+U_j<n
\]
for at least one \(j\ne i\).  Consequently \(N_i\ge1\).  On the
exceptional set where \(Z_i(U)=0\), inverse degrees are interpreted
according to the convention introduced above.

\begin{lemma}[Singular inverse degrees]\label{lem:sparse-short}
For \(1\le K\le n/2\),
\begin{equation}\label{eq:sparse-short}
 \E_{\mathbb P_n^{\mathrm{ref}}}\sum_i
 \frac{\1_{\{1\le N_i\le K\}}}{N_i}\le C\frac Kn,
 \qquad
 \E_{\mathbb P_n^{\mathrm{ref}}}\sum_i\frac1{N_i^2}
 \le C\frac{\log n}{n}.
\end{equation}
Consequently
\begin{equation}\label{eq:inv-degree-limit-short}
 \sum_i\frac1{N_i}\longrightarrow\zeta(2)
 \quad\hbox{in }L^1(\mathbb P_n^{\mathrm{ref}}).
\end{equation}
\end{lemma}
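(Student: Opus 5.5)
The plan is to compute, conditionally on \(U\), the law of the degree \(N_i\) row by row using the first-failure representation of \cref{lem:first-failure-short}, and then to average over the gap law using the exact exponential spacings of \cref{lem:gaps-short}.

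\emph{Localizing the event \(N_i=k\).} Fix \(U\) with \(R\le n/4\); the complement is handled by \eqref{eq:weighted-tail-short}, using \(1/N_i\le1\) and the fact that the sums have at most \(n\) terms. Since \(N_i(d)=\#\{j\ne i:U_j\ge U_i-d\}\) is nondecreasing in \(d\) and \(N_i(0)\ge r_i-1\), where \(r_i\) is the rank of \(U_i\) from the top, the event \(\{N_i=k\}\) occurs only if \(k\ge r_i-1\). On that event \(D_i\) lies in the interval \(I_{i,k}\) of levels \(d\) with \(N_i(d)=k\). Up to boundary shifts by one in \(k\), the length of \(I_{i,k}\) is the ordered gap \(G_k:=U_{(n-k+1)}-U_{(n-k)}\).

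\emph{Bounding \(\Pp(N_i=k\mid U)\).} On \(I_{i,k}\) the conditional density of \(D_i\) is
\[
 \frac{h_{i,U}(d)\,S_{i,U}(d)}{S_{i,U}(0)-S_{i,U}(n-)}.
\]
Here \(h_{i,U}(d)\le N_i(d)/(n-2R)\le 2k/n\). The survival ratio is at most one, and the denominator is at least \((1-e^{-1})S_{i,U}(0)\), exactly as in the proof of \cref{lem:row-bounds-short}. Hence
\[
 \Pp(N_i=k\mid U)\le C\,\frac kn\,G_k .
\]
By \eqref{eq:spacing-short}, \(\E G_k= n/(k(n-k))\le 2/k\) for \(k\le n/2\).

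\emph{Summing.} For the first bound, a row of top rank \(r\le K+1\) contributes
\[
 \sum_{k=r-1}^{K}\frac1k\cdot\frac{Ck}{n}\,\E G_k\le \frac Cn\bigl(1+\log(K/r)\bigr).
\]
Summing over \(r\le K+1\) gives \(CK/n\), since \(\sum_{r\le K}\log(K/r)=O(K)\). For the second bound, a row of rank \(r\) contributes
\[
 \frac Cn\sum_{k\ge r-1}\frac1{k^2}\cdot k\cdot\frac1k\le \frac C{n\,r}
\]
from \(k\le n/2\). Levels \(k>n/2\) contribute at most \(C/n\) in total per \(O(1)\) row, because there \(1/N_i^2\le 4/n^2\). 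Summing over \(r\le n\) yields \(C\log n/n\). The main obstacle I expect is bookkeeping at this step. One issue is the indexing between \(N_i(d)\), ranks and gaps when \(d\) crosses thresholds. The other is making sure the cap correction \(q_n^{\mathrm{cap}}\) on the maximum row and the dependence of \(I_{i,k}\) on \(U_i\) itself do not spoil the per-gap bound. Conditioning on \(U\) first, and using only deterministic interval lengths, should keep this clean.

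\emph{The \(L^1\) limit.} Fix \(\varepsilon\in(0,1/2)\) and split
\[
 \sum_i 1/N_i=\sum_i N_i^{-1}\1_{\{N_i\le\varepsilon n\}}+\frac1n\sum_i f_\varepsilon(N_i/n),\qquad f_\varepsilon(x)=x^{-1}\1_{\{x>\varepsilon\}}.
\]
By \eqref{eq:sparse-short} with \(K=\varepsilon n\), the first part has expectation at most \(C\varepsilon\). The second part is bounded by \(1/\varepsilon\). By \eqref{eq:degree-uniform-short} and \cref{lem:types-short} it converges in probability to \(\int_\varepsilon^1\rho(x)/x\,\dd x\); the discontinuity at \(\varepsilon\) can be smoothed away, since the limit law has a density. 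Bounded convergence therefore upgrades this to \(L^1\). Finally,
\[
 \int_0^1\frac{-\log(1-x)}x\,\dd x=\zeta(2),
\]
and the remainder \(\int_0^\varepsilon\rho(x)/x\,\dd x\to0\). Letting \(\varepsilon\downarrow0\) proves \eqref{eq:inv-degree-limit-short}.
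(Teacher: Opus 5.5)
Your proposal is correct and is essentially the paper's argument. Your hazard bound \(h_{i,U}(d)\le 2N_i(d)/n\) on levels with \(N_i(d)\le n-2\) (where automatically \(d\le R\)) plays the role of the paper's density bound \(2N_i(d)/n\) from the conditioned uniform minimum, your rank-by-rank summation of gap lengths is exactly the sweep identity \(\sum_i\int\Phi(N_i(d))\,\dd d=\sum_m m\Phi(m-1)s_m\), and the \(\varepsilon\)-split via \cref{lem:types-short} is the same; the only bookkeeping fix is that \(\{d:N_i(d)=k\}\) has length exactly \(U_{(n-k)}-U_{(n-k-1)}\) (mean \(n/((k+1)(n-k-1))\)) for every \(r_i-1\le k\le n-2\), rather than \(G_k\), which changes nothing in your estimates.
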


\begin{proof}
Condition on \(U\) and first work on the event \(R\le n/4\).  In the
notation of \cref{lem:first-failure-short}, put
\[
 X_{ij}=\Xi_{ij}+U_i.
\]
Then \(D_i\) has the distribution of
\(\min_{j\ne i}X_{ij}\), conditional on this minimum belonging to
\([0,n)\).

We impose this conditioning in two steps.  First condition on
\(X_{ij}\ge0\) for every \(j\ne i\).  Because this event factors over
\(j\), the variables remain independent.  Each conditional uniform
interval has length at least \(n-R\), so its density is at most
\[
 (n-R)^{-1}\le\frac4{3n}.
\]
It remains to condition the minimum to be less than \(n\).  The event
that all the \(X_{ij}\)'s are at least \(n\) has probability zero
unless \(U_i\) is the largest potential.  In that exceptional row its
probability is at most \(4^{-(n-1)}\).  The second conditioning
therefore increases densities by a factor of at most
\[
 \bigl(1-4^{-(n-1)}\bigr)^{-1},
\]
and
\[
 \frac4{3n}\frac1{1-4^{-(n-1)}}<\frac2n.
\]

At a given level \(d\), only indices satisfying
\[
 U_j\ge U_i-d
\]
can attain the minimum.  There are \(N_i(d)\) such indices.  Summing
their possible density contributions shows that the conditional
density of \(D_i\) at \(d\) is at most \(2N_i(d)/n\).  Hence, for
every nonnegative function \(g\),
\begin{equation}\label{eq:failure-dom-short}
 \E\{g(N_i(D_i))\mid U\}
 \le\frac2n\int_0^n g(N_i(d))N_i(d)\,\dd d.
\end{equation}

We next evaluate sums of the integrals on the right.  Order the
potentials decreasingly:
\[
 V_1>\cdots>V_n,\qquad s_m=V_m-V_{m+1}.
\]
For any nonnegative function \(\Phi\), the change of variables
\(t=U_i-d\), followed by summation over \(i\), gives
\begin{equation}\label{eq:sweep-short}
 \sum_i\int_0^n
 \1_{\{N_i(d)\le n-2\}}\Phi(N_i(d))\,\dd d
 =
 \sum_{m=1}^{n-1}m\Phi(m-1)s_m.
\end{equation}
Indeed, fix \(t\in(V_{m+1},V_m)\).  Exactly \(m\) potentials exceed
\(t\).  For each of the corresponding \(m\) rows, set
\(d=U_i-t\).  The count \(N_i(d)\) includes the other \(m-1\)
potentials above \(t\), but excludes the source label \(i\), and is
therefore equal to \(m-1\).  The set of values of \(t\) for which this
count remains unchanged has length \(s_m\).  Moreover,
\[
 0<d=U_i-t\le R<n,
\]
so the restriction \(0\le d\le n\) does not remove any of these
contributions.  Values of \(t\) below the smallest potential give
\(N_i(d)=n-1\) and are excluded by the indicator on the left.

Apply \eqref{eq:failure-dom-short} with
\[
 g(k)=\frac{\1_{\{1\le k\le K\}}}{k}.
\]
The factor \(N_i(d)\) cancels the denominator.  Using
\eqref{eq:sweep-short} with
\(\Phi(k)=\1_{\{1\le k\le K\}}\) gives
\[
 \E\sum_{m=2}^{K+1}ms_m
 =
 n\sum_{m=2}^{K+1}\frac1{n-m}
 \le CK,
\]
where we used
\[
 \E s_m=\frac{n}{m(n-m)}.
\]
Multiplication by \(2/n\) proves the first estimate in
\eqref{eq:sparse-short}.

For the second estimate, take
\[
 g(k)=\frac{\1_{\{1\le k\le n-2\}}}{k^2}.
\]
Then \(g(k)k=1/k\) for \(1\le k\le n-2\).  Applying
\eqref{eq:sweep-short} with
\[
 \Phi(0)=0,\qquad
 \Phi(k)=\frac1k,\quad 1\le k\le n-2,
\]
and again using the mean of \(s_m\), the factor \(2/n\) in
\eqref{eq:failure-dom-short} gives
\[
 \frac2n\E\sum_{m=2}^{n-1}\frac{m}{m-1}s_m
 =2\sum_{m=2}^{n-1}\frac1{(m-1)(n-m)}
 =
 O\left(\frac{\log n}{n}\right).
\]
The remaining case \(N_i=n-1\) contributes at most
\[
 \frac{n}{(n-1)^2}=O(n^{-1}).
\]
On the complementary event \(R>n/4\), each of the sums under
consideration is at most \(n\), while the probability of this event
is exponentially small.  This proves \eqref{eq:sparse-short}.

Finally, fix \(\eta>0\).  By \cref{lem:types-short},
\[
 \sum_i\frac{\1_{\{N_i>\eta n\}}}{N_i}
 \longrightarrow
 \int_\eta^1\frac{-\log(1-x)}x\,\dd x
\]
in \(L^1\); the left side is bounded by \(1/\eta\).  The first
estimate in \eqref{eq:sparse-short}, with
\(K=\lfloor\eta n\rfloor\), shows that
\[
 \E\sum_i\frac{\1_{\{1\le N_i\le\eta n\}}}{N_i}
 \le C\eta.
\]
Letting \(\eta\downarrow0\) and using
\[
 \int_0^1\frac{-\log(1-x)}x\,\dd x=\zeta(2)
\]
proves \eqref{eq:inv-degree-limit-short}.
\end{proof}

If \(m_{i,n}=\E(D_i\mid U)\) and
\(v_{i,n}=\Var(D_i\mid U)\), the preceding moment estimates give the
conditional row central limit theorem in the precise form needed
later.

\begin{lemma}[Conditional row characteristic function]
\label{lem:conditional-cf-short}
For every fixed \(t\), under \(\mathbb P_n^{\mathrm{ref}}\),
\begin{equation}\label{eq:conditional-cf-short}
 \E\left[\exp\left\{\frac{it}{\sqrt n}
 \sum_i(D_i-m_{i,n})\right\}\middle|U\right]
 =\exp\left\{-\frac{t^2}{2n}\sum_iv_{i,n}+o_{\Pp}(1)\right\}.
\end{equation}
\end{lemma}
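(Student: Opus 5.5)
Conditionally on \(U\), the rows are independent by \cref{lem:first-failure-short}, so the conditional characteristic function factorizes:
\[
 \E\left[\exp\left\{\frac{it}{\sqrt n}\sum_i(D_i-m_{i,n})\right\}\middle|U\right]
 =\prod_i\phi_{i}(t/\sqrt n),\qquad
 \phi_i(s)=\E\bigl[e^{is(D_i-m_{i,n})}\bigm|U\bigr].
\]
I will therefore verify a Lyapunov-type condition, conditionally on \(U\), on a good event \(G_n\) of probability tending to one, and show that the error is uniform there. Take
\(G_n=\{R\le A\log n\}\), with \(A\) chosen from \eqref{eq:weighted-tail-short}, so that \(\Pp(G_n^c)\to0\). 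For \(n\) large, \(G_n\) lies inside \(\{R\le(n-2)/4\}\), so \cref{lem:row-bounds-short} applies to every row. On \(G_n^c\) nothing needs to be proved: an \(o_{\Pp}(1)\) statement only concerns events of probability tending to one. One can define the remainder arbitrarily there, or note that the modulus of the left side is at most one. The exceptional rows with \(Z_i=0\) lie inside \(\{R\ge n\}\), so they do not occur on \(G_n\).

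On \(G_n\), I expand each factor to second order with a third-order remainder. With \(s=t/\sqrt n\),
\[
 \Bigl|\phi_i(s)-1+\tfrac{s^2}{2}v_{i,n}\Bigr|
 \le \tfrac{|s|^3}{6}\E(|D_i-m_{i,n}|^3\mid U)
 \le C|t|^3n^{-3/2}(R+1)^3
\]
by \eqref{eq:row-moment-short} with \(p=3\). Also \(s^2v_{i,n}\le Ct^2(R+1)^2/n\), and on \(G_n\) this is at most \(Ct^2\log^2 n/n\to0\) uniformly in \(i\). Hence every \(\phi_i(s)\) lies in a disc of small radius around \(1\), and the principal logarithm is defined. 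Using \(|\log(1+z)-z|\le|z|^2\) for \(|z|\le1/2\), I get
\[
 \sum_i\log\phi_i(s)=-\frac{t^2}{2n}\sum_iv_{i,n}+\mathcal E_n,
\]
where
\[
 |\mathcal E_n|\le C|t|^3n^{-1/2}(R+1)^3+C\,n\Bigl(\frac{t^2(R+1)^2}{n}\Bigr)^2 .
\]
On \(G_n\) both terms are \(O(\log^3n/\sqrt n)+O(\log^4n/n)\to0\). Exponentiating gives \eqref{eq:conditional-cf-short}, with \(o_{\Pp}(1)\) holding uniformly on \(G_n\).

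The only delicate point is that the row moment bounds depend on \(R\) rather than being uniform. This is handled by the \(R=O_{\Pp}(\log n)\) control from \cref{lem:gaps-short}: the polylogarithmic growth is absorbed by the factor \(n^{-1/2}\). The lower bound \(v_{i,n}\ge c\) is not needed for this statement. It will matter only later, when the limiting variance \(n^{-1}\sum_iv_{i,n}\) is identified and shown to be nondegenerate.
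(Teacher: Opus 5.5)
Your proof is correct and follows the paper's argument essentially step by step. It uses conditional independence of the rows given $U$, a third-order Taylor expansion controlled by the $p=3$ row moment bound, and a logarithm expansion costing $O((R+1)^4/n)$, all restricted to a high-probability event on which $R$ is logarithmic. The only cosmetic difference is that you take the good event to be $\{R\le A\log n\}$ directly, whereas the paper works on $\{R\le(n-2)/4\}$ and then invokes $R=O_{\Pp}(\log n)$.
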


\begin{proof}
Conditional on \(U\), the rows are independent.  Taylor expansion of
each centered conditional characteristic function gives a third-order
remainder bounded by
\[
 \frac{C_t}{n^{3/2}}
 \E(|D_i-m_{i,n}|^3\mid U).
\]
The moment bound in \cref{lem:row-bounds-short} is asserted only on
\[
 \mathcal G_n=\{R\le(n-2)/4\}.
\]
This restriction causes no loss: by
\eqref{eq:weighted-tail-short},
\(\Pp(\mathcal G_n^c)=O(n^{-M})\) for every fixed \(M\).
On \(\mathcal G_n\), after summing over \(i\),
\cref{lem:row-bounds-short} bounds the remainder by
\(C_t(R+1)^3/\sqrt n=o_{\Pp}(1)\).  Expanding the logarithm of the
product costs at most \(C_t(R+1)^4/n=o_{\Pp}(1)\).  The quadratic
terms give the exponent in \eqref{eq:conditional-cf-short}.
\end{proof}

\section{The Ferrers tree factor}
\label{sec:ferrers-short}

The adaptive product law \(\mathbb P_n^{\mathrm{ad}}\) accounts for the local constraints in each
row, but it does not yet impose that the corresponding tight edges fit
together into one tree.  This remaining global requirement is carried
by \(\mathfrak T(w)\).  The purpose of this section is to show that,
after the local row weights have been removed, the tree factor
converges to a constant.

The argument has three steps.  First, the matrix-tree theorem writes
the normalized tree factor as a determinant.  Second, after ordering
the potentials, the active entries in every row form a block starting
from the first column.  Replacing their weights by one then produces a
nested zero--one matrix whose determinant can be computed exactly.
Finally, we put back the true weights.  They differ only slightly from
one on the relevant event, so the determinant changes by a negligible
amount.  The resulting constant makes the likelihood ratio between
the canonical and adaptive laws asymptotically flat.

\subsection{A bounded cofactor below the tree likelihood}

At any configuration with positive row weights, recalling the notation
from Section~\ref{sec:RandRoot}, put
\[
 \kappa_i=\sum_jw_{ij},\qquad P_{ij}=\frac{w_{ij}}{\kappa_i},
 \qquad
 \widehat{\mathcal T}_n
 =n\frac{\mathfrak T(w)}{\prod_i\kappa_i}.
\]
Under the adaptive law, \(\kappa_i>0\) almost surely for every \(i\),
so \(P\) is row-stochastic.  The preceding first-failure calculation gives its
probabilistic meaning:
\[
 P_{ij}=\Pp(J_i=j\mid U,D_i).
\]
Thus \(\kappa_i/n\) is the total rate at which some off-diagonal
constraint in row \(i\) becomes tight, and \(P_{ij}\) is the share of
that risk carried by column \(j\).  

It also explains exactly why the normalized tree polynomial is a
Markov-chain quantity.  Fix a root \(o\), and independently choose an
outgoing successor \(J_i\) with law \(P_{i\cdot}\) for every
\(i\ne o\).  For a particular in-arborescence \(T\) toward \(o\),
\[
 \prod_{e\in T}w_e
 =\left(\prod_{i\ne o}\kappa_i\right)
   \prod_{e\in T}P_e.
\]
Consequently \(\tau_o(P)\) is the probability that all these arrows
form an in-arborescence toward \(o\), equivalently that every chosen
path eventually reaches \(o\).  The root has no outgoing tree edge,
which leaves the factor \(1/\kappa_o\); summing over roots gives
\begin{equation}
\label{eq:normalized-tree-risk-short}
 \frac{\mathfrak T(w)}{\prod_i\kappa_i}
 =\sum_o\frac{\tau_o(P)}{\kappa_o},
 \qquad
 \widehat{\mathcal T}_n
 =\sum_o\tau_o(P)\frac n{\kappa_o}.
\end{equation}
In words, \(\kappa_i\) is the total weight of the possible outgoing
boundary edges from each nonroot vertex \(i\).  After these local
weights are factored out, \(\tau_o(P)\) imposes the global requirement
that the selected edges form a single tree directed toward \(o\),
rather than directed cycles or disconnected components.
Thus, \(\widehat{\mathcal T}_n\) is the residual global weight of
the requirement that the locally selected outgoing edges form a
single rooted spanning tree.

For \(M>0\), let \(f_M(x)=M\wedge x^{-1}\), set
\[
 (u_M)_i=f_M(\kappa_i/n),\qquad 1\le i\le n,
\]
and define
\begin{equation}\label{eq:bounded-det-short}
\mathcal K_{n,M}
 =\det\Bigl\{I-P+\tfrac1n\,u_M\mathbf1^{\mathsf T}\Bigr\}.
\end{equation}
Here \(\tfrac1n u_M\mathbf1^{\mathsf T}\) is the rank-one matrix with
entries \((u_M)_i/n\); on the adaptive law \(\kappa_i>0\) for every
\(i\), so \(P\) is row-stochastic and \(u_M\) is well defined.

\begin{lemma}[Cofactor lower bound]\label{lem:cofactor-short}
For every \(M>0\),
\begin{equation}\label{eq:cofactor-bound-short}
 0\le\mathcal K_{n,M}\le\widehat{\mathcal T}_n.
\end{equation}
\end{lemma}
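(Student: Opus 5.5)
The plan is to compute \(\mathcal K_{n,M}\) exactly with the matrix determinant lemma and the directed matrix-tree theorem. This shows that it is a truncated version of the representation \eqref{eq:normalized-tree-risk-short} of \(\widehat{\mathcal T}_n\), and both inequalities then follow termwise.

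Put \(L=I-P\). On the adaptive law \(P\) is row-stochastic with zero diagonal, since \(w_{ii}=0\). Hence \(L\mathbf1=0\), \(\det L=0\), and \(L\) is exactly the out-degree Laplacian of the weighted digraph with weights \(P_{ij}\), \(i\ne j\). For any matrix \(L\) and vectors \(b,c\), the matrix determinant lemma reads
\[
 \det(L+bc^{\mathsf T})=\det L+c^{\mathsf T}\operatorname{adj}(L)\,b .
\]
We apply it with \(b=u_M/n\) and \(c=\mathbf1\), which gives
\(\mathcal K_{n,M}=n^{-1}\mathbf1^{\mathsf T}\operatorname{adj}(L)u_M\).

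Next we identify the adjugate. The identity \(L\operatorname{adj}(L)=(\det L)I=0\) puts every column of \(\operatorname{adj}(L)\) in \(\ker L\).
\begin{itemize}
\item If \(\operatorname{rank}L=n-1\), then \(\ker L=\operatorname{span}(\mathbf1)\), so \(\operatorname{adj}(L)=\mathbf1 v^{\mathsf T}\) for some vector \(v\). Reading off the diagonal gives \(v_o=\operatorname{adj}(L)_{oo}\), which is the principal minor of \(L\) with row and column \(o\) deleted.
\item If \(\operatorname{rank}L<n-1\), then \(\operatorname{adj}(L)=0\). All principal \((n-1)\)-minors vanish, so the same formula \(\operatorname{adj}(L)=\mathbf1v^{\mathsf T}\) holds trivially with \(v=0\).
\end{itemize}

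By the directed matrix-tree theorem for the out-degree Laplacian, this principal minor equals \(\sum_{T\in\mathfrak T_o}\prod_{e\in T}P_e=\tau_o(P)\). Here each nonroot row \(i\) contributes its one outgoing edge, and the root row is deleted. This is the same orientation convention fixed in Section~\ref{sec:chart} and used in \eqref{eq:normalized-tree-risk-short}.

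Substituting \(\operatorname{adj}(L)=\mathbf1v^{\mathsf T}\) with \(v_o=\tau_o(P)\) gives
\[
 \mathcal K_{n,M}=\frac1n\,\mathbf1^{\mathsf T}\mathbf1\sum_o\tau_o(P)(u_M)_o=\sum_o\tau_o(P)\,\Bigl(M\wedge\frac n{\kappa_o}\Bigr).
\]
Every \(\tau_o(P)\) is a polynomial with nonnegative coefficients in the nonnegative entries of \(P\), and \(0\le M\wedge n/\kappa_o\le n/\kappa_o\). Comparing termwise with \eqref{eq:normalized-tree-risk-short} yields \(0\le\mathcal K_{n,M}\le\widehat{\mathcal T}_n\).

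The only step needing care is the identification of the adjugate with the rooted tree sums. One must check the orientation: rows of \(L\) carry outgoing edges, so deleting row and column \(o\) counts in-arborescences toward \(o\) rather than out-arborescences. One must also cover the degenerate rank case. Both are handled by the argument above, using only \(L\mathbf1=0\); the remaining computation is routine.
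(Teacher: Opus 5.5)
Your proof is correct and follows the paper's argument. You apply the rank-one determinant identity using $\det(I-P)=0$, identify $\operatorname{adj}(I-P)=\mathbf 1\bigl(\tau_1(P),\ldots,\tau_n(P)\bigr)$ via the directed matrix-tree theorem, and compare $\sum_o\tau_o(P)f_M(\kappa_o/n)$ termwise with \eqref{eq:normalized-tree-risk-short}. The only difference is that you derive the constant-column form of the adjugate from $L\operatorname{adj}(L)=0$ and the principal-minor version of the theorem, covering the rank-deficient case as well, whereas the paper cites the all-minors (Chaiken--Kleitman) version directly.
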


\begin{proof}
Since \(P\) is row-stochastic, \((I-P)\mathbf1=0\) and hence
\(\det(I-P)=0\).  The directed matrix-tree theorem in the
outgoing-edge convention identifies the adjugate as
\[
 \operatorname{adj}(I-P)=\mathbf1\,\bigl(\tau_1(P),\ldots,\tau_n(P)\bigr),
 \qquad\text{i.e.}\qquad
 \operatorname{adj}(I-P)_{io}=\tau_o(P)
\]
for all \(i,o\) \cite{ChaikenKleitman1978}.  For a square matrix \(B\)
and column vectors \(u,v\in\mathbb R^n\), the rank-one determinant
identity reads
\[
 \det\bigl(B+uv^{\mathsf T}\bigr)
 =\det B+v^{\mathsf T}\operatorname{adj}(B)\,u .
\]
Apply it with \(B=I-P\), \(u=u_M\), and \(v^{\mathsf T}=\tfrac1n\mathbf1^{\mathsf T}\).
The adjugate formula gives
\[
 \operatorname{adj}(I-P)\,u_M
 =\Bigl(\sum_o\tau_o(P)f_M(\kappa_o/n)\Bigr)\mathbf1 ,
\]
and \(\tfrac1n\mathbf1^{\mathsf T}\mathbf1=1\), so that
\[
 \mathcal K_{n,M}=\sum_o\tau_o(P)\,f_M(\kappa_o/n).
\]
On the other hand, by \eqref{eq:normalized-tree-risk-short},
\[
 \widehat{\mathcal T}_n=\sum_o\tau_o(P)\,\frac n{\kappa_o}.
\]
The tree weights \(\tau_o(P)\) are nonnegative, being sums of products
of nonnegative transition probabilities, and for every \(o\),
\[
 0\le f_M(\kappa_o/n)\le\frac n{\kappa_o}.
\]
Summing these inequalities over \(o\) proves
\eqref{eq:cofactor-bound-short}.
\end{proof}

\subsection{A six-vertex picture}

Before giving the general argument, consider a configuration with
\(n=6\) and decreasing potentials
\[
 U=\left(\frac54,\frac34,\frac14,-\frac14,-\frac34,-\frac54\right).
\]
Choose
\[
 A=\left(-\frac12,0,0,1,\frac32,\frac32\right),
 \qquad
 D=A+U
 =\left(\frac34,\frac34,\frac14,
          \frac34,\frac34,\frac14\right).
\]
Every nonnegative residual \(A_i+U_j\) is smaller than \(6\), so the
upper cutoff removes no entry in this example.
The condition \(A_i+U_j\ge0\), with the diagonal entry removed, gives
the active-edge matrix
\[
 (c_{ij})_{1\le i,j\le6}
 =
 \begin{pmatrix}
 0&1&0&0&0&0\\
 1&0&1&0&0&0\\
 1&1&0&0&0&0\\
 1&1&1&0&1&0\\
 1&1&1&1&0&1\\
 1&1&1&1&1&0
 \end{pmatrix}.
\]
Its row degrees and the corresponding prefix lengths are
\[
 (N_i)=(1,2,2,4,5,5),
 \qquad
 (k_i)=(N_i+1)=(2,3,3,5,6,6).
\]
Adding back the diagonal turns row \(i\) into a solid block of ones in
columns \(1,\ldots,k_i\).  Thus
\[
 P^{(0)}_{ij}=\frac{c_{ij}}{N_i},\qquad
 Q_{ij}=\frac{\1_{\{j\le k_i\}}}{k_i},
\]
and
\[
 I-P^{(0)}
 =\diag\left(\frac{k_i}{k_i-1}\right)(I-Q).
\]
This is the basic reason for introducing the restored diagonal: the
rows of \(Q\) are nested prefixes.  After the elementary row operations
used below, the determinant becomes upper triangular.  In this example,
the exact product formula is
\[
 \mathcal T_6^{(0)}
 =\prod_{i=1}^6\left(1+\frac1{N_i}\right)
 =2\left(\frac32\right)^2\frac54
   \left(\frac65\right)^2
 =\frac{81}{10}.
\]
For the true matrix, every active entry \(1\) above is replaced by
\(1+\varepsilon_{ij}\).  The remainder of the section shows that these
small changes do not alter the determinant asymptotically.

\subsection{The nested zero--one support}

Fix a tail index \(p\) and let \(B_p\) be as in
\eqref{eq:edge-event-short}.  All Ferrers identities below are
asserted on
\[
 \mathcal E_{n,p}
 =\{\,\osc U+\max_iD_i\le B_p\log n\,\}.
\]
If \(A_i+U_j\ge0\), then
\[
 0\le A_i+U_j=D_i+U_j-U_i
 \le D_i+\osc U\le B_p\log n<n/2
\]
for all large \(n\).  Hence on \(\mathcal E_{n,p}\) the support of
\(w\) is exactly
\[
 c_{ij}=\1_{\{i\ne j\}}\1_{\{A_i+U_j\ge0\}},\qquad
 N_i=\sum_jc_{ij},\qquad P^{(0)}_{ij}=c_{ij}/N_i,
\]
and this identification is pointwise.  On an active edge, write
\[
 a_{ij}=A_i+U_j=D_i+U_j-U_i.
\]
Since \(\omega_n(a)=(1-a/n)^{-1}\) for \(0\le a<n\), we have
\[
 w_{ij}=c_{ij}(1+\varepsilon_{ij}),\qquad
 \varepsilon_{ij}=\frac{a_{ij}}{n-a_{ij}}
\]
on active edges, and we set \(\varepsilon_{ij}=0\) otherwise.  Thus,
pointwise on \(\mathcal E_{n,p}\),
\[
 0\le\varepsilon_{ij}
 \le\frac{B_p\log n}{n-B_p\log n}
 \le\frac{2B_p\log n}{n}
\]
for every \(i,j\) and all sufficiently large \(n\).  In particular,
\(\kappa_i\ge N_i\) for every \(i\).
On \(\mathcal E_{n,p}^{\,c}\) the auxiliary objects may be extended
arbitrarily.  Since the convergence statements below that use these
objects are in probability under \(\mathbb P_n^{\mathrm{ref}}\),
\eqref{eq:edge-event-short} makes this extension immaterial.  Order the potentials
decreasingly by applying the same permutation to rows and columns;
this preserves both the aligned diagonal and every determinant.
Restoring the diagonal, the active set in row \(i\) is
a prefix
\begin{equation}\label{eq:prefix-short}
 \{j:A_i+U_j\ge0\}=\{1,\ldots,k_i\},
 \qquad k_i=N_i+1\ge i.
\end{equation}
Indeed, \(A_i+U_i=D_i\ge0\), so the restored diagonal lies in the
prefix and \(i\le k_i\).

\begin{lemma}[Ferrers product]\label{lem:ferrers-product-short}
Let
\[
 \mathcal T_n^{(0)}=
 \det\left\{I-P^{(0)}+ \frac1n\left(\frac n{N_i}\right)_i
\mathbf1^{\mathsf T}   \right\}.
\]
Then
\begin{equation}\label{eq:ferrers-product-short}
 \mathcal T_n^{(0)}=\prod_i\left(1+\frac1{N_i}\right)
 \longrightarrow e^{\zeta(2)}
\end{equation}
in probability under \(\mathbb P_n^{\mathrm{ref}}\).
\end{lemma}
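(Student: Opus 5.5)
The plan is to compute the determinant exactly by factoring out a diagonal. After that, the rank-one perturbation turns out to cancel the prefix blocks of \(Q\) and leaves a unit upper-triangular matrix. Convergence then follows from \cref{lem:sparse-short}. Throughout I work pointwise on \(\mathcal E_{n,p}\), with the potentials ordered decreasingly so that \eqref{eq:prefix-short} holds. I also use \(N_i\ge1\), which holds almost surely as noted before \cref{lem:sparse-short}.

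\emph{Step 1: factor the diagonal.} Put \(k_i=N_i+1\). Row \(i\) of \(c\) with the diagonal restored is the indicator of \(\{1,\ldots,k_i\}\). Hence \((I-P^{(0)})_{ij}=\1_{\{i=j\}}-(\1_{\{j\le k_i\}}-\1_{\{i=j\}})/N_i\), which equals \(\frac{k_i}{N_i}(I-Q)_{ij}\) with \(Q_{ij}=\1_{\{j\le k_i\}}/k_i\). The rank-one term \(\frac1n(n/N_i)\mathbf1^{\mathsf T}\) has every entry of row \(i\) equal to \(1/N_i=\frac{k_i}{N_i}\cdot\frac1{k_i}\). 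Therefore
\[
 \mathcal T_n^{(0)}=\prod_i\frac{k_i}{N_i}\,
 \det\Bigl\{I-Q+\bigl(k_i^{-1}\bigr)_i\mathbf1^{\mathsf T}\Bigr\}.
\]

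\emph{Step 2: triangular determinant.} In row \(i\), adding \(1/k_i\) to every column cancels the prefix block \(1/k_i\) on columns \(j\le k_i\). The row of the remaining matrix is therefore \(\1_{\{j=i\}}+k_i^{-1}\1_{\{j>k_i\}}\). Since \(k_i\ge i\) by \eqref{eq:prefix-short}, the extra entries lie strictly above the diagonal. The matrix is thus unit upper triangular, its determinant is \(1\), and \(\mathcal T_n^{(0)}=\prod_i(1+1/N_i)\). This is consistent with the six-vertex example, where the product is \(81/10\). The simultaneous row and column permutation used for the ordering leaves the determinant unchanged, so the identity holds for the original labelling as well.

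\emph{Step 3: the limit.} Write
\[
\log\mathcal T_n^{(0)}=\sum_i\log(1+1/N_i).
\]
Since \(0\le x-\log(1+x)\le x^2/2\) for \(x\ge0\), we have \(|\log\mathcal T_n^{(0)}-\sum_i1/N_i|\le\frac12\sum_iN_i^{-2}\). By \eqref{eq:sparse-short}, this error is \(O_{\Pp}(\log n/n)\to0\). By \eqref{eq:inv-degree-limit-short}, \(\sum_i1/N_i\to\zeta(2)\) in \(L^1(\mathbb P_n^{\mathrm{ref}})\). Hence \(\log\mathcal T_n^{(0)}\to\zeta(2)\) in probability, and continuity of the exponential gives \eqref{eq:ferrers-product-short}. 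Finally, the identity was proved only on \(\mathcal E_{n,p}\), but \(\Pp(\mathcal E_{n,p}^c)\to0\) by \eqref{eq:edge-event-short}, so the convergence in probability is unaffected.

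The only genuinely delicate input is the singular inverse-degree control, already supplied by \cref{lem:sparse-short}. The main point to check carefully is Step 2: that the rank-one term exactly cancels the prefix blocks, and that \(k_i\ge i\) so the leftover entries are strictly upper triangular. Both follow directly from the Ferrers structure \eqref{eq:prefix-short}.
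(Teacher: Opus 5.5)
Your proof is correct and follows the paper's argument. You factor $\diag(k_i/N_i)$ off $I-P^{(0)}$ to reach the nested prefix matrix $Q$, and you observe that the rank-one term cancels the prefix blocks and leaves a triangular matrix. The paper instead multiplies by $\diag(k_i)$ and triangularizes $B=K-\mathsf F+\mathbf1\mathbf1^{\mathsf T}$, which is just your unit upper-triangular matrix with row $i$ scaled by $k_i$. You then conclude, as the paper does, from \cref{lem:sparse-short} and $0\le x-\log(1+x)\le x^2/2$.
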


\begin{proof}
Let \(Q_{ij}=\1_{\{j\le k_i\}}/k_i\) and
\(C=\diag(k_i/(k_i-1))\).  Direct inspection gives
\(I-P^{(0)}=C(I-Q)\): restoring the diagonal changes a uniform row
of length \(k_i\) into the same row with its diagonal entry removed
and the remaining \(k_i-1=N_i\) entries renormalized.  Since
\(C^{-1}(n/N_i)_i=(n/k_i)_i\), factoring \(C\) and then multiplying
the remaining determinant on the left by \(K=\diag(k_i)\) reduces it to
\[
 B=K-\mathsf F+\mathbf1\mathbf1^{\mathsf T},\qquad
 \mathsf F_{ij}=\1_{\{j\le k_i\}}.
\]
Its diagonal is \(k_i\); an off-diagonal entry can be nonzero only
when \(j>k_i\ge i\).  Thus \(B\) is upper triangular and
\(\det B=\prod_ik_i\).  Keeping track of the two diagonal factors
gives
\[
 \mathcal T_n^{(0)}
 =\prod_i\frac{k_i}{k_i-1}
 =\prod_i\left(1+\frac1{N_i}\right),
\]
which is the exact identity.  By
\cref{lem:sparse-short},
\[
 \sum_iN_i^{-1}\to\zeta(2),\qquad
 \sum_iN_i^{-2}\to0
\]
in probability.  The inequality
\(0\le x-\log(1+x)\le x^2/2\) proves the limit.
\end{proof}

\subsection{Truncating the root weight and restoring the true weights}

The bounded-root version of \(\mathcal T_n^{(0)}\) is
\[
 \mathcal K_{n,M}^{(0)}=
 \det\Bigl\{I-P^{(0)}+
 \tfrac1n(f_M(N_i/n))_i\mathbf1^{\mathsf T}\Bigr\}.
\]

\begin{lemma}[Root truncation and perturbation]\label{lem:root-perturb-short}
There are weights \(\omega_i^{\rm root}\ge0\), summing to one and
satisfying \(\omega_i^{\rm root}\le1/(N_i+1)\), such that
\begin{equation}\label{eq:root-mixture-short}
 \frac{\mathcal K_{n,M}^{(0)}}{\mathcal T_n^{(0)}}
 =\sum_i\omega_i^{\rm root}\min\left\{\frac{MN_i}{n},1\right\}.
\end{equation}
Consequently
\begin{equation}\label{eq:root-loss-short}
 0\le1-\frac{\mathcal K_{n,M}^{(0)}}{\mathcal T_n^{(0)}}
 \le\sum_{N_i<n/M}\frac1{N_i}.
\end{equation}
For fixed \(M\),
\begin{equation}\label{eq:det-perturb-short}
 \mathcal K_{n,M}-\mathcal K_{n,M}^{(0)}\longrightarrow0
\end{equation}
in probability under \(\mathbb P_n^{\mathrm{ref}}\).
\end{lemma}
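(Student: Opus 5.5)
The plan is to read both determinants through the matrix-tree representation already used in \cref{lem:cofactor-short}, and then exploit the Ferrers structure of \cref{lem:ferrers-product-short}.

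\emph{Root-mixture identity.} Exactly as in \cref{lem:cofactor-short}, the rank-one determinant identity together with $\operatorname{adj}(I-P^{(0)})=\mathbf1(\tau_1,\dots,\tau_n)$ gives
\[
 \mathcal K^{(0)}_{n,M}=\sum_o\tau_o(P^{(0)})f_M(N_o/n),
 \qquad
 \mathcal T^{(0)}_n=\sum_o\tau_o(P^{(0)})\frac n{N_o}.
\]
I therefore define
\[
 \omega^{\rm root}_o=\frac{\tau_o(P^{(0)})\,n/N_o}{\mathcal T^{(0)}_n}.
\]
These weights are nonnegative and sum to one, and since $f_M(x)/x^{-1}=\min\{Mx,1\}$ they give \eqref{eq:root-mixture-short} at once.

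For the bound $\omega^{\rm root}_o\le1/(N_o+1)$ I will use the Markov chain tree theorem. The vector $(\tau_o(P^{(0)}))_o$ is proportional to the stationary law $\pi$ of $P^{(0)}$. The factorization $I-P^{(0)}=C(I-Q)$ shows that $\pi C$ is stationary for the Ferrers chain $Q$. Since $C_{oo}=k_o/N_o$, the weights $\omega^{\rm root}$ are exactly proportional to $\pi_oC_{oo}$, so $\omega^{\rm root}$ is the stationary law $\pi^Q$ of $Q$.

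The stationarity equations for $Q$ read $\pi^Q_j=\sum_{i:k_i\ge j}\pi^Q_i/k_i$. Equivalently, $\pi^Q$ is the normalized left null vector of the upper-triangular system obtained from $B$ in \cref{lem:ferrers-product-short}. I would solve this system by back-substitution (or by Cramer's rule on the triangular $B$, where replacing a column keeps a computable structure) to obtain $\pi^Q_o\le1/k_o=1/(N_o+1)$. Monotonicity of $\pi^Q_j$ in $j$ is one tool available here. I expect this inequality to be the delicate combinatorial point of the first half.

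Given the identity and the weight bound, \eqref{eq:root-loss-short} is immediate. We have $1-\min\{MN_i/n,1\}\le\1_{\{N_i<n/M\}}$, and $\omega_i\le1/(N_i+1)\le1/N_i$.

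\emph{Perturbation \eqref{eq:det-perturb-short}.} I work on $\mathcal E_{n,p}$, which has probability $1-O(n^{-p})$ by \eqref{eq:edge-event-short}. There $w_{ij}=c_{ij}(1+\varepsilon_{ij})$ with $0\le\varepsilon_{ij}\le2B_p\log n/n$, and $\kappa_i\ge N_i$. I write
\[
 \mathcal K_{n,M}-\mathcal K^{(0)}_{n,M}
 =\sum_o\bigl[\tau_o(P)-\tau_o(P^{(0)})\bigr]f_M(\kappa_o/n)
 +\sum_o\tau_o(P^{(0)})\bigl[f_M(\kappa_o/n)-f_M(N_o/n)\bigr].
\]
The second sum is small because $f_M$ is Lipschitz with constant $M^2$ and $|\kappa_o-N_o|\le N_o\cdot O(\log n/n)$. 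Also $\sum_o\tau_o(P^{(0)})=$ (the probability that the random arrows form a tree) $\le1$.

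The first sum is the main obstacle. Each tree monomial changes by a factor $\prod_{i\ne o}(1+\varepsilon_{iJ_i})/(1+\bar\varepsilon_i)$, where $\bar\varepsilon_i$ is the row average of $\varepsilon_{i\cdot}$ over active entries. A crude bound loses a factor $e^{O(\log n)}$. The cancellation has to come from centering.

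I would expand $\varepsilon_{ij}=(A_i+U_j)/n+O(\log^2n/n^2)$. The $A_i$ part is constant along row $i$ and cancels in the normalization. The $U_j$ part enters as $\sum_{i\ne o}\bigl(U_{J_i}-\langle U\rangle_{i}\bigr)/n$, where $\langle U\rangle_i$ is the average of $U$ over the active prefix of row $i$. Under the product arrow law $P^{(0)}$, this is a sum of independent centered terms of size $O(\log n/n)$, hence $O_{\Pp}(\log n/\sqrt n)$. I would make this rigorous by a second-moment bound on $\sum_o\tau_o(P^{(0)})\,\E^{(0)}\!\bigl[\,|e^{X}-1|\;\big|\;\text{tree at }o\bigr]$, where $X$ denotes that centered sum. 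The conditioning on forming a tree is handled by dominating it by the unconditioned product law. The weights $f_M\le M$ keep everything bounded.

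If conditioning on the tree event spoils independence too badly, the fallback is to bound the determinant directly. Here $I-P+\tfrac1n u_M\mathbf1^{\mathsf T}$ is a rank-one perturbation of $I-P^{(0)}+\tfrac1n u_M\mathbf1^{\mathsf T}$ plus a matrix with row sums zero and entries of size $O(\log n/n)$ relative to $P^{(0)}$. One would then use the resolvent of the Ferrers triangular form to show that the resulting trace term $\operatorname{tr}\bigl((\cdot)^{-1}\Delta\bigr)$ is $o_{\Pp}(1)$.
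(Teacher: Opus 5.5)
\textbf{Weight bound.} Your mixture identity and your weights $\omega_o=\tau_o(P^{(0)})(n/N_o)/\mathcal T_n^{(0)}$ agree with the paper's, and \eqref{eq:root-loss-short} does follow from the weight bound as you say. The gap is the weight bound itself.

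The identification $\omega^{\rm root}=\pi^Q$ drops a factor. Since $\pi_o\propto\tau_o(P^{(0)})$, you have $\omega_o\propto\pi_o/N_o=\pi_oC_{oo}/k_o$, so in fact $\omega_o\propto\pi^Q_o/k_o$. The inequality you then plan to prove, $\pi^Q_o\le1/k_o$, is false. In the paper's six-vertex example, $k=(2,3,3,5,6,6)$, and $Q$ has closed class $\{1,2,3\}$ with $\pi^Q=(2/5,2/5,1/5,0,0,0)$. Thus $\pi^Q_2=2/5>1/3=1/k_2$, whereas $\omega^{\rm root}=(1/2,1/3,1/6,0,0,0)$ does satisfy the bound.

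With the correct vector there is nothing delicate. $\omega^{\rm root}$ is the normalized left null vector of $K-\mathsf F=\diag(N_i)(I-P^{(0)})$, i.e.\ $(\omega^{\rm root})^{\mathsf T}=\mathbf 1^{\mathsf T}B^{-1}$. The $j$-th column of $(\omega^{\rm root})^{\mathsf T}B=\mathbf 1^{\mathsf T}$ reads $k_j\omega_j^{\rm root}+\sum_{i:k_i<j}\omega_i^{\rm root}=1$. Together with $\omega^{\rm root}\ge0$, this gives $\omega^{\rm root}_j\le1/k_j$ at once. It is also what the stationarity equations for $Q$ say after substituting $\pi^Q_j\propto k_j\omega^{\rm root}_j$.

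\textbf{Perturbation.} For \eqref{eq:det-perturb-short}, your main route does not close. Replacing the tree-conditioned arrow law by the product law costs the unbounded density $\1_{\{\text{tree at }o\}}/\tau_o(P^{(0)})$. What you actually get is $|\tau_o(P)-\tau_o(P^{(0)})|\le\E^{(0)}|e^{X_o}-1|=O(\log n/\sqrt n)$ per root, hence $O(M\sqrt n\log n)$ after summing over the $n$ roots. Even the Cauchy--Schwarz refinement $\E^{(0)}[|e^{X_o}-1|\1_{\{\text{tree at }o\}}]\le(\E^{(0)}|e^{X_o}-1|^2)^{1/2}\tau_o(P^{(0)})^{1/2}$ only yields $O(M\log n)$. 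Under the conditioned law the arrows are not independent and $X_o$ is not centred, so the $\log n/\sqrt n$ cancellation is unavailable exactly where you need it. A smaller point: $\sum_o\tau_o(P^{(0)})$ is not a probability (it equals $81/40$ in the six-vertex example). Your second sum is rescued by $\sum_o\tau_o(P^{(0)})\le\mathcal T^{(0)}_n=O_{\Pp}(1)$.

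The missing idea is that no tree-level cancellation is needed. At the matrix level the perturbation is tiny in Hilbert--Schmidt norm: $\|P-P^{(0)}\|_{\rm F}^2\le(\max_{k,l}\varepsilon_{kl})^2\sum_iN_i^{-1}=O_{\Pp}(\log^2n/n^2)$. Meanwhile $\|P^{(0)}\|_{\rm F}^2=\sum_iN_i^{-1}=O_{\Pp}(1)$ by \eqref{eq:inv-degree-limit-short}, and the root vectors are bounded by $M$. The paper then applies the dimension-free local Lipschitz estimate for $\det{}_2$ in Frobenius norm. It restores the explicit trace factor $\exp\{n^{-1}\sum_iu_{M,i}\}$, which is available because $P$ has zero diagonal.

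Your fallback points in this direction, but a first-order term $\operatorname{tr}(A^{-1}\Delta)$ alone proves nothing. You would still need to bound the Ferrers resolvent and every higher-order term of $\det(I+A^{-1}\Delta)$ uniformly in $n$. That is precisely what the $\det{}_2$ estimate supplies.
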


\begin{proof}
\medskip\noindent\emph{Root truncation.}
With \(B\) as above, the rank-one determinant formula gives
\eqref{eq:root-mixture-short} with
\((\omega^{\rm root})^{\mathsf T}=\mathbf1^{\mathsf T}B^{-1}\).
Indeed, if
\[
 \widetilde L=K-\mathsf F=\diag(N_i)(I-P^{(0)}),\qquad
 B=\widetilde L+\mathbf1\mathbf1^{\mathsf T},
\]
then \(\widetilde L\mathbf1=0\), hence \(B\mathbf1=n\mathbf1\) and
\[
 (\omega^{\rm root})^{\mathsf T}\mathbf1
 =\mathbf1^{\mathsf T}B^{-1}\mathbf1=1.
\]
The identity
\(\det(\widetilde L+z\mathbf1^{\mathsf T})
=\mathbf1^{\mathsf T}\operatorname{adj}(\widetilde L)z\), the matrix-tree
theorem \cite{ChaikenKleitman1978}, and
\(\det B=(\prod_iN_i)\mathcal T_n^{(0)}\) give
\[
 \omega_o^{\rm root}
 =\frac{\tau_o(P^{(0)})\,n/N_o}{\mathcal T_n^{(0)}}\ge0
 \qquad(1\le o\le n).
\]
The \(j\)-th column of
\((\omega^{\rm root})^{\mathsf T}B=\mathbf1^{\mathsf T}\) reads
\[
 k_j\omega_j^{\rm root}
 +\sum_{i:k_i<j}\omega_i^{\rm root}=1,
\]
so \(\omega_j^{\rm root}\le1/k_j\).  This proves
\eqref{eq:root-loss-short}.  For \(M\ge2\), its expected right side is
\(O(1/M)\) by \eqref{eq:sparse-short}.

\medskip\noindent\emph{Same-dimension perturbation.}
On \(\mathcal E_{n,p}\), put
\[
 \bar\varepsilon_i=\frac1{N_i}\sum_jc_{ij}\varepsilon_{ij}.
\]
Then \(\kappa_i=N_i(1+\bar\varepsilon_i)\), and
\[
 P_{ij}-P^{(0)}_{ij}
 =\frac{c_{ij}}{N_i}
   \frac{\varepsilon_{ij}-\bar\varepsilon_i}
        {1+\bar\varepsilon_i}.
\]
Since \(0\le\bar\varepsilon_i\le\max_{k,l}\varepsilon_{kl}\),
\[
 |\varepsilon_{ij}-\bar\varepsilon_i|
 \le\max_{k,l}\varepsilon_{kl}.
\]
Consequently,
\[
 \|P-P^{(0)}\|_{\rm F}^2
 \le\left(\max_{k,l}\varepsilon_{kl}\right)^2
       \sum_iN_i^{-1}=o_{\Pp}(1),
\]
where we used the deterministic bound above and
\eqref{eq:inv-degree-limit-short}.  The same bound gives, pointwise on
\(\mathcal E_{n,p}\),
\[
 0\le\kappa_i-N_i=N_i\bar\varepsilon_i
 \le 2B_p\log n,
\]
and hence
\(\max_i|\kappa_i-N_i|/n=O(\log n/n)\).  Since \(f_M\) is
\(M^2\)-Lipschitz, if
\(u_M^{(0)}=(f_M(N_i/n))_i\), then
\[
 \left\|\tfrac1n(u_M-u_M^{(0)})\mathbf1^{\mathsf T}\right\|_{\rm F}^2
 =\frac1n\sum_i|u_{M,i}-u_{M,i}^{(0)}|^2=o_{\Pp}(1).
\]
For each fixed \(M\), the two matrices have tight Frobenius norms because
\(\|P^{(0)}\|_{\rm F}^2=\sum_iN_i^{-1}\), while the root vectors are
bounded by \(M\).  Thus the exponential factor below is
\(O_{\Pp,M}(1)\); its bound is allowed to depend on \(M\).  The
dimension-free estimate
\[
 |\det{}_2(I-S)-\det{}_2(I-T)|
 \le\|S-T\|_{\rm F}
 \exp\{C(1+\|S\|_{\rm F}+\|T\|_{\rm F})^2\}
\]
therefore applies on events of arbitrarily high probability
\cite[Theorem~9.2(c)]{Simon2005}.  To check the trace correction, put
\(S=P-\tfrac1n u_M\mathbf1^{\mathsf T}\), and for the comparison matrix use
\(S^{(0)}=P^{(0)}-\tfrac1n u_M^{(0)}\mathbf1^{\mathsf T}\).  Since \(P\) has
zero diagonal,
\[
 \operatorname{tr}S=-\frac1n\sum_i u_{M,i},\qquad
 \det\Bigl(I-P+\tfrac1n u_M\mathbf1^{\mathsf T}\Bigr)
 =\exp\left\{\frac1n\sum_i u_{M,i}\right\}\det{}_2(I-S).
\]
The \(L^2\) convergence of the root vectors implies
\(\left|n^{-1}\sum_i(u_{M,i}-u_{M,i}^{(0)})\right|=o_{\Pp}(1)\)
by Cauchy--Schwarz.
Thus, for each fixed \(M\), both the regularized determinant and its scalar
trace factor are continuous under the preceding Frobenius convergence.  This proves
\eqref{eq:det-perturb-short}.  By \eqref{eq:edge-event-short}, the
complement of \(\mathcal E_{n,p}\) does not affect this convergence in
probability.
\end{proof}

\subsection{The normalization squeeze and the second comparison}

\begin{proposition}[The second total-variation comparison]
\label{prop:second-tv-short}
\begin{equation}\label{eq:tree-L1-short}
 \widehat{\mathcal T}_n\longrightarrow e^{\zeta(2)}
 \quad\hbox{in }L^1(\mathbb P_n^{\mathrm{ad}}),
\end{equation}
and
\begin{equation}\label{eq:second-tv-short}
 \|\mathbb P_n^{\mathrm{can}}-\mathbb P_n^{\mathrm{ad}}\|_{\rm TV}\longrightarrow0.
\end{equation}
Hence \(\|\mathbb P_n^{\mathrm{can}}-\mathbb P_n^{\mathrm{ref}}\|_{\rm TV}\to0\).
\end{proposition}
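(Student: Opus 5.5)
The plan is to compute the Radon--Nikodym derivative \(\dd\mathbb P_n^{\mathrm{can}}/\dd\mathbb P_n^{\mathrm{ad}}\) explicitly, recognise it as a constant multiple of \(\widehat{\mathcal T}_n\), and then prove \eqref{eq:tree-L1-short} by a squeeze between \(\mathcal K_{n,M}\) and \(\widehat{\mathcal T}_n\) combined with exact normalization.

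\emph{Step 1: the likelihood ratio.} In the \((U,A)\) coordinates, the adaptive density is
\[
(Z_n^{\rm ad})^{-1}\prod_i\1_{[-U_i,n-U_i]}(A_i)\,\kappa_{i,U}(A_i)F_{i,U}(A_i).
\]
Here \(\prod_iF_{i,U}(A_i)=\prod_{i\ne j}\rho_n(A_i+U_j)\), and \(\kappa_{i,U}(A_i)=\sum_jw_{ij}=\kappa_i\). The support indicator is exactly \(\{0\le D_i\le n\}\). Comparing with the second form of the density in \cref{prop:canonical}, the row products cancel. Using \eqref{eq:Zad-short}, I expect to obtain
\[
\frac{\dd\mathbb P_n^{\mathrm{can}}}{\dd\mathbb P_n^{\mathrm{ad}}}
=\frac{n!}{n^{2n-1}}\,Z_n^{\rm ad}\,\frac{\mathfrak T(w)}{\prod_i\kappa_i}
=\overline W_n\,\widehat{\mathcal T}_n .
\]
Since the left side integrates to one, \(\E_{\rm ad}\widehat{\mathcal T}_n=1/\overline W_n\), and this tends to \(e^{\zeta(2)}\) by \cref{prop:first-tv-short}. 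This exact normalization is the upper half of the squeeze. Moreover, once \eqref{eq:tree-L1-short} is established, we get \(2\|\mathbb P_n^{\mathrm{can}}-\mathbb P_n^{\mathrm{ad}}\|_{\rm TV}=\E_{\rm ad}|\overline W_n\widehat{\mathcal T}_n-1|\to0\). The final claim then follows from the triangle inequality with \cref{prop:first-tv-short}.

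\emph{Step 2: the lower half.} Fix \(M\) and put \(c=e^{\zeta(2)}\). By \cref{lem:cofactor-short}, \(\widehat{\mathcal T}_n\ge\mathcal K_{n,M}\ge0\). Under \(\mathbb P_n^{\mathrm{ref}}\), \cref{lem:root-perturb-short} and \cref{lem:ferrers-product-short} give
\[
\mathcal K_{n,M}\ \ge\ \mathcal T_n^{(0)}\Bigl(1-\sum_{N_i<n/M}N_i^{-1}\Bigr)-o_{\Pp}(1),
\qquad \mathcal T_n^{(0)}\to c .
\]
In addition, \(\E_{\rm ref}\sum_{N_i<n/M}N_i^{-1}\le C/M\) by \eqref{eq:sparse-short}. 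Hence, for every \(\eta>0\),
\[
\limsup_n\mathbb P_n^{\mathrm{ref}}\{\mathcal K_{n,M}<c-\eta\}\le C'/(M\eta).
\]
The same bound transfers to \(\mathbb P_n^{\mathrm{ad}}\) because \(\dd\mathbb P_n^{\mathrm{ad}}/\dd\mathbb P_n^{\mathrm{ref}}\le2e^{\zeta(2)}\). The auxiliary objects defined off \(\mathcal E_{n,p}\) are harmless, since that event has probability \(O(n^{-p})\) under both laws. Letting \(M\to\infty\) gives \((c-\widehat{\mathcal T}_n)_+\le(c-\mathcal K_{n,M})_+\to0\) in probability under \(\mathbb P_n^{\mathrm{ad}}\). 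The quantity \((c-\widehat{\mathcal T}_n)_+\) is bounded by \(c\), so its \(\mathbb P_n^{\mathrm{ad}}\)-expectation tends to zero.

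\emph{Step 3: the squeeze.} Write
\[
\E|\widehat{\mathcal T}_n-c|=\E(\widehat{\mathcal T}_n-c)+2\E(c-\widehat{\mathcal T}_n)_+ .
\]
The first term tends to zero by Step 1 and the second by Step 2, which proves \eqref{eq:tree-L1-short}.

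I expect the main obstacle to be Step 2. The upper side is free from normalization, but the lower side must survive both the singular root weight \(n/\kappa_o\) and the weight perturbation. The truncation by \(f_M\) together with the mixture bound \eqref{eq:root-loss-short} is what makes this work, and one has to check carefully that the \(O(1/M)\) loss is uniform in \(n\) before exchanging the limits in \(n\) and \(M\).
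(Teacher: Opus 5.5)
Your overall route is the paper's: the exact ratio $\overline W_n\widehat{\mathcal T}_n$, the lower bound $\widehat{\mathcal T}_n\ge\mathcal K_{n,M}$ proved under $\mathbb P_n^{\mathrm{ref}}$ and transferred to $\mathbb P_n^{\mathrm{ad}}$, and a squeeze against the normalization. Step 2 is fine; the pointwise bound $2e^{\zeta(2)}$ on $\dd\mathbb P_n^{\mathrm{ad}}/\dd\mathbb P_n^{\mathrm{ref}}$ does the same job as the paper's total-variation transfer.

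\textbf{The gap is in Step 1.} $\mathbb P_n^{\mathrm{can}}$ is not absolutely continuous with respect to $\mathbb P_n^{\mathrm{ad}}$, so ``the left side integrates to one'' is false. The tree factor $\mathfrak T(w)=\sum_o\tau_o(w)$ never requires the root to have an outgoing edge. Hence the canonical density is positive on an open, nonempty set where exactly one row $o$ has $\kappa_o=0$:
\begin{itemize}
\item Take $U_o$ the strict maximum and $0\le D_o<U_o-\max_{j\ne o}U_j$. Then every $a_{oj}<0$, so $\kappa_o=0$.
\item When $\osc U+\max_iD_i<n$, every other row has the active edge $i\to o$. The star into $o$ then contributes to $\tau_o(w)$.
\item Already for $n=2$, root $2$ with $Y_{12}>Y_{11}+Y_{22}$ gives $\kappa_2=0$ with positive probability.
\end{itemize}
On that set every adaptive row kernel vanishes, because its density carries the factor $\kappa_{i,U}$. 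Off the zero-degree sector, the common factor $\prod_{i\ne j}\rho_n(a_{ij})$ does give absolute continuity. So your computation is correct exactly for the restriction $\mathbb P_n^{\mathrm{can},+}$ of the canonical law to $\{\kappa_i>0\ \forall i\}$. Its mass $p_n^+$ is strictly less than one, and what you actually get is $\E_{\mathbb P_n^{\mathrm{ad}}}\widehat{\mathcal T}_n=p_n^+/\overline W_n\le1/\overline W_n$, not equality.

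\textbf{Consequences.} The inequality is all your Step 3 uses, since it only needs $\limsup_n\E(\widehat{\mathcal T}_n-c)\le0$. So \eqref{eq:tree-L1-short} survives. The total-variation step does not, as written. With $\mathcal L_n=\overline W_n\widehat{\mathcal T}_n$, the correct identity is $2\|\mathbb P_n^{\mathrm{can}}-\mathbb P_n^{\mathrm{ad}}\|_{\mathrm{TV}}=\E_{\mathbb P_n^{\mathrm{ad}}}|\mathcal L_n-1|+(1-p_n^+)$. You must therefore also show that the singular mass $1-p_n^+$ vanishes. This takes one line: $1-p_n^+=\E_{\mathbb P_n^{\mathrm{ad}}}(1-\mathcal L_n)\le\E_{\mathbb P_n^{\mathrm{ad}}}|\mathcal L_n-1|\to0$. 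This is precisely how the paper closes the argument, and with this correction your proof is complete.
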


\begin{proof}
By \cref{lem:cofactor-short,lem:ferrers-product-short,lem:root-perturb-short},
for fixed \(M\),
\[
 \widehat{\mathcal T}_n\ge\mathcal K_{n,M}
 \ge\mathcal T_n^{(0)}
 \left(1-\sum_{N_i<n/M}N_i^{-1}\right)+o_{\Pp}(1).
\]
The expected truncated sum is \(O(1/M)\).  First let \(n\to\infty\),
then \(M\to\infty\), to obtain under \(\mathbb P_n^{\mathrm{ref}}\)
\begin{equation}\label{eq:tree-lower-short}
 \Pp\{\widehat{\mathcal T}_n<e^{\zeta(2)}-\varepsilon\}\to0.
\end{equation}
The first total-variation comparison transfers this one-sided bound
to \(\mathbb P_n^{\mathrm{ad}}\).  Notice the order: the empirical logistic type
and the Ferrers lower bound were proved under the explicit reference
law before any use of the conclusion sought here.

Let \(\mathbb P_n^{\mathrm{can},+}\) be the unnormalized restriction of the
canonical law to the sector \(\kappa_i>0\) for every \(i\), and let
\(p_n^+\) be its mass.  These are the same degrees that appear in the
adaptive density.  Indeed, the potential density of
\(\mathbb P_n^{\mathrm{ad}}\) contains \(\prod_iZ_i\), so
\(Z_i>0\) for every \(i\), almost surely.  Conditional on such a
potential, \(\nu_{i,U}\) has density proportional to
\(\kappa_{i,U}F_{i,U}\); it therefore assigns probability one to
\(\kappa_i=\kappa_{i,U}(A_i)>0\).  Thus the adaptive law is supported
entirely on the positive-degree sector, while the missing canonical
mass is exactly \(1-p_n^+\).  Comparing \eqref{eq:canonical-density-short},
\eqref{eq:A-law-short}, \eqref{eq:Zcan}, and \eqref{eq:Zad-short}
gives the exact identity
\begin{equation}\label{eq:Q-A-RN-short}
 \frac{\dd\mathbb P_n^{\mathrm{can},+}}{\dd\mathbb P_n^{\mathrm{ad}}}
 =\overline W_n\widehat{\mathcal T}_n.
\end{equation}
No additional canonical mass is hidden on the set
\(\{\osc U\ge n\}\), where the adaptive potential density vanishes.
Indeed, choose \(i\) with minimal potential and \(j\) with maximal
potential.  Since \(D_i\ge0\),
\[
 a_{ij}=D_i+U_j-U_i\ge U_j-U_i=\osc U\ge n.
\]
Thus \(\rho_n(a_{ij})=0\); by the pointwise boundary convention in
\cref{lem:boundary-version}, every canonical tree monomial vanishes.
Hence the canonical density itself is zero on this set.
Therefore
\[
 \E_{\mathbb P_n^{\mathrm{ad}}}\widehat{\mathcal T}_n
 =\frac{p_n^+}{\overline W_n}
 \le\frac1{\overline W_n}\longrightarrow e^{\zeta(2)}.
\]
For nonnegative variables, the lower bound
\eqref{eq:tree-lower-short} and this mean upper bound force
\eqref{eq:tree-L1-short}: the negative parts relative to the limit
are bounded and vanish in probability; convergence of the means then
forces the positive parts to vanish.  Thus
\(\mathcal L_n=\overline W_n\widehat{\mathcal T}_n\to1\) in \(L^1(\mathbb P_n^{\mathrm{ad}})\),
and \(1-p_n^+\le\E_{\mathbb P_n^{\mathrm{ad}}}|\mathcal L_n-1|\to0\).

On the disjoint positive- and zero-degree sectors, direct integration
of the density difference gives
\[
 2\|\mathbb P_n^{\mathrm{can}}-\mathbb P_n^{\mathrm{ad}}\|_{\rm TV}
 =\E_{\mathbb P_n^{\mathrm{ad}}}|\mathcal L_n-1|+(1-p_n^+),
\]
which proves \eqref{eq:second-tv-short}.  The last assertion follows
from the triangle inequality and \cref{prop:first-tv-short}.
\end{proof}

\section{The two Gaussian fluctuations}
\label{sec:fluct-short}

This section separates the two sources of fluctuation.  Once the
potential field \(U\) is fixed, the row variables are independent, and
their centered sum produces the \emph{row fluctuation}.  Their
conditional means still move with \(U\); this produces the
\emph{environment fluctuation}.

The first two subsections identify the limiting row law and justify
replacing the finite row moments by their logistic limits.  The rest of
the section studies the environment term.  Its main steps are to
linearize the mean row response around the logistic law and then to
write this linear response as a sum over the independent exponential
gaps.  The intervening remainder and gradient estimates make these two
steps rigorous.  The final proposition collects the resulting
environment central limit theorem, and the two Gaussian contributions
are combined in the next section.

\subsection{The continuum row law and its variance}

For a probability measure \(\varpi\) with finite first moment, define
\[
 \Lambda_\varpi(a)=\int(a+v)_+\,\varpi(\dd v),
\]
\begin{equation}\label{eq:continuum-survival-short}
 \overline F_\varpi(d\mid u)=
 \exp\{-\Lambda_\varpi(d-u)+\Lambda_\varpi(-u)\},
 \qquad d\ge0,
\end{equation}
and
\[
 m_\varpi(u)=\int_0^\infty\overline F_\varpi(d\mid u)\,\dd d,
 \qquad
 v_\varpi(u)=2\int_0^\infty d\overline F_\varpi(d\mid u)\,\dd d
 -m_\varpi(u)^2.
\]
Put \(\mathcal M(\varpi)=\int m_\varpi(u)\,\varpi(\dd u)\) and
\(\mathcal V(\varpi)=\int v_\varpi(u)\,\varpi(\dd u)\).

\begin{lemma}[Logistic row moments]\label{lem:logistic-moments-short}
For the logistic law \(\ell(u)\dd u\), let \(x=L(u)\) and
\(b(x)=-\log(1-x)\).  Then
\begin{equation}\label{eq:logistic-row-short}
 \overline F_\ell(d\mid u)=\frac{L(u-d)}{L(u)},\qquad
 m_\ell(u)=\frac{b(x)}x,\qquad
 \mathcal M(\ell)=\zeta(2),
\end{equation}
and
\begin{equation}\label{eq:vrow-short}
 v_{\rm row}:=\mathcal V(\ell)=4\zeta(3)-2\zeta(2).
\end{equation}
\end{lemma}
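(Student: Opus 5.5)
The plan is to compute everything in closed form. The integrated tail $\Lambda_\ell$ turns out to be the softplus function, and after that the substitution $x=L(u)$ reduces each moment to a polylogarithm integral over $(0,1)$.

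\emph{Step 1: the survival function.} First I would identify $\Lambda_\ell$. Since $\Lambda_\ell'(a)=\int\1_{\{v>-a\}}\ell(v)\,\dd v=1-L(-a)=L(a)$ and $\Lambda_\ell(a)\to0$ as $a\to-\infty$, we get
\[
 \Lambda_\ell(a)=\int_{-\infty}^aL(s)\,\dd s=\log(1+e^{a}).
\]
Substituting into \eqref{eq:continuum-survival-short} gives
\[
 \overline F_\ell(d\mid u)=\frac{1+e^{-u}}{1+e^{d-u}}=\frac{L(u-d)}{L(u)}.
\]
This agrees with the local row limit of \cref{lem:compact-row-short}, which is a useful consistency check.

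\emph{Step 2: the mean.} Next, with $s=u-d$,
\[
 m_\ell(u)=\frac1{L(u)}\int_{-\infty}^uL(s)\,\dd s=\frac{\Lambda_\ell(u)}{L(u)}.
\]
Since $\log(1+e^u)=-\log(1-L(u))=b(x)$, this is $m_\ell(u)=b(x)/x$. Under $\ell$, the variable $x=L(u)$ is uniform on $(0,1)$. Hence
\[
 \mathcal M(\ell)=\int_0^1\frac{-\log(1-x)}{x}\,\dd x=\zeta(2),
\]
which is the same integral already used in the proof of \cref{lem:sparse-short}.

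\emph{Step 3: the variance.} For the second moment, I would again set $s=u-d$ and integrate by parts twice:
\[
 \int_0^\infty d\,L(u-d)\,\dd d=\int_{-\infty}^u(u-s)L(s)\,\dd s=\int_{-\infty}^u\log(1+e^s)\,\dd s=-\mathrm{Li}_2(-e^u).
\]
With $e^u=x/(1-x)$, Landen's identity
\[
 \mathrm{Li}_2\!\left(-\tfrac{x}{1-x}\right)=-\mathrm{Li}_2(x)-\tfrac12\log^2(1-x)
\]
gives $2\int_0^\infty d\,\overline F_\ell\,\dd d=\{2\mathrm{Li}_2(x)+b(x)^2\}/x$. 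Therefore
\[
 v_\ell(u)=\frac{2\mathrm{Li}_2(x)+b^2}{x}-\frac{b^2}{x^2}.
\]
Integrating over uniform $x$ uses three integrals:
\[
 \int_0^1\frac{2\mathrm{Li}_2(x)}{x}\,\dd x=2\zeta(3),\qquad
 \int_0^1 b^2\Bigl(\frac1x-\frac1{x^2}\Bigr)\dd x=-\int_0^1\frac{(1-x)b^2}{x^2}\,\dd x.
\]
The last integral must be evaluated as a whole: the $1/x^2$ singularity is cancelled by $b^2\sim x^2$ at $0$, and near $x=1$ the factor $(1-x)$ tames the logarithm.

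\emph{Expected obstacle.} The main delicate point is this combined integral, which should equal $2\zeta(2)-2\zeta(3)$. I would substitute $y=1-x$, so that it becomes $\int_0^1 y\log^2y\,(1-y)^{-2}\,\dd y$. Expanding $(1-y)^{-2}=\sum_{k\ge1}ky^{k-1}$ and using $\int_0^1 y^k\log^2y\,\dd y=2/(k+1)^3$ gives
\[
 \sum_{k\ge1}\frac{2k}{(k+1)^3}=2\zeta(2)-2\zeta(3),
\]
where we write $k=(k+1)-1$. Termwise integration is justified by positivity (monotone convergence). Collecting terms,
\[
 \mathcal V(\ell)=2\zeta(3)-(2\zeta(2)-2\zeta(3))=4\zeta(3)-2\zeta(2),
\]
which is \eqref{eq:vrow-short}. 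The remaining care points are the routine boundary terms in the integrations by parts, all of which vanish because $L(s)$ and $\log(1+e^s)$ decay exponentially as $s\to-\infty$.
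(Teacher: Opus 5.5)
Your proposal is correct and follows the paper's proof: the softplus form of $\Lambda_\ell$, the change of variables $x=L(u)$ with $x$ uniform under $\ell$, and termwise series evaluation of the resulting integrals. You deviate only in two computational details. You reach $\{2\operatorname{Li}_2(x)+b(x)^2\}/x$ through $-\operatorname{Li}_2(-e^u)$ and Landen's identity, where the paper substitutes $z=L(u-d)$ directly. You also merge the last two integrals, which is harmless but, contrary to your remark, not forced: $\int_0^1 b(x)^2x^{-2}\dd x=2\zeta(2)$ converges on its own, since the integrand tends to $1$ at $0$ and $\log^2(1-x)$ is integrable at $1$. This is why the paper can split the sum as $2\zeta(3)+2\zeta(3)-2\zeta(2)$.
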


\begin{proof}
For the logistic law,
\(\Lambda_\ell(a)=\log(1+e^a)\), proving the survival formula.  The
substitution \(z=L(u-d)\) gives
\[
 m_\ell(u)=\frac1x\int_0^x\frac{\dd z}{1-z}=\frac{b(x)}x.
\]
Since \(L(U)\) is uniform when \(U\) is logistic, its average is
\(\int_0^1b(x)x^{-1}\dd x=\zeta(2)\).  The same substitution in the
second-moment survival integral yields
\[
 \E(D^2\mid x)=\frac{2\operatorname{Li}_2(x)+b(x)^2}{x}.
\]
Therefore
\[
 \mathcal V(\ell)=
 2\int_0^1\frac{\operatorname{Li}_2(x)}x\dd x
 +\int_0^1\frac{b(x)^2}x\dd x
 -\int_0^1\frac{b(x)^2}{x^2}\dd x.
\]
Termwise integration gives respectively
\(2\zeta(3),2\zeta(3),2\zeta(2)\), proving
\eqref{eq:vrow-short}.
\end{proof}

We now return to the finite row law.  For
\(\varpi_n=n^{-1}\sum_j\delta_{U_j}\), put
\begin{equation}\label{eq:H-short}
 H_i(d;U)=\Lambda_{\varpi_n}(d-U_i)-\Lambda_{\varpi_n}(-U_i)
 =\frac1n\sum_j\{(d+U_j-U_i)_+-(U_j-U_i)_+\}.
\end{equation}
As quantified by the logarithmic comparison
\eqref{eq:product-log-short} below, \(H_i(d;U)\) is the first-order
approximation to
\(-\log\{S_{i,U}(d)/S_{i,U}(0)\}\), obtained by replacing
\(-\log(1-x)\) with \(x\) in the finite product.
We then set
\begin{equation}\label{eq:Phi-short}
 \Phi_n(U)=n\mathcal M(\varpi_n)
 =\sum_i\int_0^\infty e^{-H_i(d;U)}\,\dd d.
\end{equation}

\subsection{Replacing finite products by continuum survivals}

\begin{lemma}[Finite-product replacement]\label{lem:finite-row-short}
Under \(\mathbb P_n^{\rm gap}\),
\begin{equation}\label{eq:finite-mean-short}
 \frac1{\sqrt n}\left\{\sum_im_{i,n}(U)-\Phi_n(U)\right\}
 \longrightarrow0
\end{equation}
in \(L^1\), and
\begin{equation}\label{eq:finite-var-short}
 \frac1n\sum_iv_{i,n}(U)\longrightarrow v_{\rm row}
\end{equation}
in probability and in \(L^1\).
\end{lemma}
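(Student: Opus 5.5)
We compare the exact finite row survival with its continuum surrogate row by row, on the good event \(\mathcal G_n'=\{R\le A\log n\}\). By \eqref{eq:weighted-tail-short}, the complement contributes \(O(n^{-M})\) to every expectation below: on \(\{R\le n/4\}\) each \(m_{i,n}\), \(v_{i,n}\) and \(\Phi_n\)-summand is polynomially bounded in \(n\) and \(R\), and on \(\{R>n/4\}\) the trivial bounds \(D_i\le n\) suffice.

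Fix \(i\) and work on \(\mathcal G_n'\). By \eqref{eq:row-survival-short},
\[
 \Pp(D_i\ge d\mid U)=\frac{r_i(d)-q_i}{1-q_i},\qquad r_i(d)=\frac{S_{i,U}(d)}{S_{i,U}(0)},
\]
where \(q_i=0\) except for the maximum row, for which \(q_i=q_n^{\rm cap}\le (A\log n/n)^{n-1}\). The cap is therefore negligible after multiplying by \(n\).

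For \(d\le n/4\), use \(|-\log(1-x)-x|\le x^2\) for \(0\le x\le 1/2\). Applied termwise to the differences \((d+U_j-U_i)_+-(U_j-U_i)_+\), this gives the logarithmic comparison
\[
 \bigl|-\log r_i(d)-H_i(d;U)\bigr|\le \frac{C(d+R)^2}{n}.
\]
I take this to be \eqref{eq:product-log-short}. The second-order term matters here: a naive \(O(1/n)\) error per row summed over \(n\) rows gives \(O(1)\), which is still \(o(\sqrt n)\). So the crude bound suffices, and no cancellation is needed.

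We also need tail control. Lemma~\ref{lem:row-bounds-short} gives \(r_i(d)\le e^{-(d-R)_+/2}\). For \(H_i\), a linear-growth lower bound \(H_i(d;U)\ge (d-R)\) for \(d\ge R\) holds directly, since every hinge is active. Consequently,
\[
 |m_{i,n}-m_{\varpi_n}(U_i)|\le \int_0^{n/4} e^{-\min(H_i,-\log r_i)}\,\frac{C(d+R)^2}{n}\,\dd d+O(e^{-cn})\le \frac{C(R+1)^3}{n},
\]
using \(|e^{-x}-e^{-y}|\le e^{-\min(x,y)}|x-y|\). Summing over \(i\) gives \(|\sum_i m_{i,n}-\Phi_n(U)|\le C(R+1)^3\) on \(\mathcal G_n'\). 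Dividing by \(\sqrt n\), the \(O_{\Pp}(\log^3 n)\) moments of \(R\) from Lemma~\ref{lem:gaps-short} yield \(L^1\) convergence in \eqref{eq:finite-mean-short}.

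For \eqref{eq:finite-var-short}, the same comparison applied to \(2\int_0^\infty d\,\Pp(D_i\ge d\mid U)\,\dd d\) shows that
\[
 \frac1n\sum_i v_{i,n}-\mathcal V(\varpi_n)=O\bigl((R+1)^4/n\bigr)
\]
on the good event. It then remains to show \(\mathcal V(\varpi_n)\to\mathcal V(\ell)=v_{\rm row}\) (Lemma~\ref{lem:logistic-moments-short}), and this is the main obstacle. The plan is a continuity argument in \(W_1\) on a truncated window.

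First, \(\Lambda_\varpi\) is \(1\)-Lipschitz in \(\varpi\) for \(W_1\), uniformly in \(a\). Hence, for \(|u|\le K\) and \(d\le K'\), the survivals \(\overline F_{\varpi_n}(d\mid u)\) and \(\overline F_\ell(d\mid u)\) differ by at most \(2W_1(\varpi_n,\ell)\). This is the same argument as in Lemma~\ref{lem:compact-row-short}, and it makes \(m_{\varpi_n}\) and \(v_{\varpi_n}\) converge uniformly on \(|u|\le K\).

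Second, the integrals over \(d>K'\) need uniform exponential tails. We get these from \(H_i(d)\ge\) (a fraction of \(d\)) whenever a positive proportion of the potentials lie above \(u-K'/2\), which is exactly the second-moment argument in the proof of Lemma~\ref{lem:types-short}.

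Third, the mass of \(\varpi_n\) outside \([-K,K]\) is handled as follows. There \(v_{\varpi_n}(u)\le C(1+|u|)^2\), and \(\int u^2\,\dd\varpi_n\to\pi^2/3\) by \eqref{eq:moment-short}, which gives uniform integrability. Moreover \(\varpi_n\to\ell\) in \(W_1\) with rate \eqref{eq:W1-short}.

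Letting \(n\to\infty\) and then \(K,K'\to\infty\) gives convergence in probability. The \(L^1\) convergence then follows from the uniform bound \(n^{-1}\sum_i v_{i,n}\le C(R+1)^2\), whose moments are bounded.
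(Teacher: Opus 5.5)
Your argument for \eqref{eq:finite-mean-short} follows the paper's proof: the logarithmic comparison \eqref{eq:product-log-short}, the envelopes $r_i(d)\le e^{-(d-R)_+/2}$ and $H_i(d;U)\ge(d-R)_+$, the negligible cap, and \eqref{eq:weighted-tail-short} off $\{R\le A\log n\}$. One small correction: $H_i$ contains the self-index $j=i$, which is absent from the finite product and contributes $d/n$, so the comparison should read $C(d+R+1)^2/n$.

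For the in-probability half of \eqref{eq:finite-var-short}, your compact-window argument works, but it does more work than needed. The bound you quote, $|\Lambda_{\varpi_n}(a)-\Lambda_\ell(a)|\le\delta_n:=W_1(\varpi_n,\ell)$ for \emph{every} $a$, already gives the global relative comparison $e^{-2\delta_n}\overline F_\ell(d\mid u)\le\overline F_{\varpi_n}(d\mid u)\le e^{2\delta_n}\overline F_\ell(d\mid u)$ for all $d\ge0$ and all $u$. Hence $|v_{\varpi_n}(u)-v_\ell(u)|\le C\delta_n(1+u_+^2)$ on $\{\delta_n\le1\}$, and the logistic survival controls every tail. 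This is the paper's route, and it removes both truncations.

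The genuine gap is your last sentence, i.e.\ the $L^1$ part of \eqref{eq:finite-var-short}. The majorant $C(R+1)^2$ does \emph{not} have bounded moments. Since $R=\sum_k(k^{-1}+(n-k)^{-1})E_k$ has $\E R=2\mathsf H_{n-1}\sim2\log n$, one gets $\E(R+1)^2\asymp\log^2n\to\infty$. Domination by variables whose means diverge gives no uniform integrability, so convergence in probability is not upgraded to $L^1$ this way. What is needed is a uniformly integrable majorant on the bulk, with the crude bound used only on an event of small enough probability. The paper does this in three parts:
\begin{itemize}
\item On $\{\delta_n\le1\}$, it averages $C\delta_n(1+u_+^2)$ over the rows. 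Cauchy--Schwarz then applies, using $\E\delta_n^2=O(\log n/n)$ from \eqref{eq:W1-short} and $\sup_n\E(n^{-1}\sum_iU_i^2)^2<\infty$; the latter follows from \eqref{eq:moment-short} and the boundedness of $n^{-1}\sum_j\bar u_{j,n}^2$.
\item On $\{\delta_n>1\}$, it uses your $C(R+1)^2$. The cost is only $\{\E(R+1)^4\}^{1/2}\Pp(\delta_n>1)^{1/2}=O(\log^{5/2}n/\sqrt n)$.
\item Finally, $n^{-1}\sum_iv_\ell(U_i)\to\mathcal V(\ell)$ in $L^1$ by bounded convergence, because $v_\ell$ is bounded and continuous.
\end{itemize}

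Alternatively, you can repair your own route by making the third step quantitative. With $s^2=\int v^2\,\varpi(\dd v)$, Chebyshev shows that for $d\ge2u_++4s$ at least $3/4$ of the mass of $\varpi$ lies above $u-d/2$. There every hinge increment is at least $d/2$, so the exponent is at least $3d/8$. This gives $v_\varpi(u)\le C\{1+u_+^2+s^2\}$ with an absolute $C$, so your constant actually depends on the second moment of $\varpi_n$. Then $C(1+2n^{-1}\sum_iU_i^2)$ is a uniformly integrable majorant for $\mathcal V(\varpi_n)$. In either version, the finite-versus-continuum error $O((R+1)^4/n)$ on the good event is harmless in $L^1$, since its expectation is $O(\log^4n/n)$.
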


\begin{proof}
Write
\(\widetilde F_{i,n}(d)=S_{i,U}(d)/S_{i,U}(0)\). We first omit the upper-endpoint correction \(S_{i,U}(n-)\) in the
exact survival formula and define the resulting auxiliary moments 
\[
 \widetilde m_{i,n}
 =\int_0^\infty\widetilde F_{i,n}(d)\,\dd d,\qquad
 \widetilde M_{2,i,n}
 =2\int_0^\infty d\,\widetilde F_{i,n}(d)\,\dd d.
\]
Here and below the finite survival is extended by zero for \(d\ge n\).
For the continuum survival in
\eqref{eq:continuum-survival-short}, write
\[
 M_{2,\varpi}(u)
 =2\int_0^\infty d\,\overline F_\varpi(d\mid u)\,\dd d.
\]
When
\(R+d\le n/2\), put
\[
 a_j=\frac{(d+U_j-U_i)_+}{n},\qquad
 b_j=\frac{(U_j-U_i)_+}{n}.
\]
Then \(0\le a_j,b_j\le1/2\), and
\[
 |\log(1-a_j)-\log(1-b_j)+(a_j-b_j)|
 \le a_j^2+b_j^2.
\]
Summing over \(j\ne i\) costs at most
\(2(R+d)^2/n\).  The self index \(j=i\), present in \(H_i\) but
absent from the finite product, contributes \(d/n\).  Hence,
uniformly in \(i\),
\begin{equation}\label{eq:product-log-short}
 \left|\log\widetilde F_{i,n}(d)+H_i(d;U)\right|
 \le\frac{3(R+d+1)^2}{n}.
\end{equation}

We also record the integrable envelope needed to turn this logarithmic
comparison into moment bounds.  For every
\(x\in[-R,R]\),
\[
 (d+x)_+-x_+\ge(d-R)_+,
\]
and therefore
\[
 H_i(d;U)\ge(d-R)_+,\qquad
 \int_0^\infty e^{-H_i(d;U)}\,\dd d\le R+1.
\]
On the event \(R\le A\log n\), integrate this comparison up to
\(R+B\log n\).  On this range the right side of
\eqref{eq:product-log-short} is \(O(\log^2n/n)\); for large \(n\),
\(|e^\varepsilon-1|\le2|\varepsilon|\) therefore converts
\eqref{eq:product-log-short} and the preceding envelope into an
integrated error \(O(\log^3n/n)\).  Multiplication by \(d\) costs one
further factor \(O(\log n)\).  Beyond \(R\), the two survivals are bounded by
\(e^{-(d-R)/2}\) and \(e^{-(d-R)}\), respectively.  Thus
\begin{align}
 |\widetilde m_{i,n}-m_{\varpi_n}(U_i)|
 &\le C\frac{\log^3n}{n}+O(n^{-B/2}),\label{eq:mean-bound-short}\\
 |\widetilde M_{2,i,n}-M_{2,\varpi_n}(U_i)|
 &\le C\frac{\log^4n}{n}+O(n^{-B/2}\log n).
 \label{eq:second-bound-short}
\end{align}
The upper-cap subtraction is zero except in the maximum-potential
row, where its ratio is \(q_n^{\mathrm{cap}}\le(A\log n/n)^{n-1}\).  From
\eqref{eq:row-survival-short}, its pointwise effect is at most
\(q_n^{\mathrm{cap}}/(1-q_n^{\mathrm{cap}})\); after integration, its effect on the mean and second
moment is at most, respectively,
\[
 \frac{nq_n^{\mathrm{cap}}}{1-q_n^{\mathrm{cap}}},\qquad
 \frac{n^2q_n^{\mathrm{cap}}}{1-q_n^{\mathrm{cap}}}.
\]
These are superpolynomially small on the event, so the same bounds
hold for the exact conditioned moments.  Summing
\eqref{eq:mean-bound-short} proves \eqref{eq:finite-mean-short} on
the event.   If
\(\mathcal B_{n,A}=\{R\le A\log n\}\), then
\begin{align*}
 &\E\left[
 \frac{\1_{\mathcal B_{n,A}^{c}}}{\sqrt n}
 \sum_i|m_{i,n}-m_{\varpi_n}(U_i)|\right]\\
 &\qquad\le
 n^{3/2}\Pp(\mathcal B_{n,A}^{c})
 +\sqrt n\,\E[(R+1)\1_{\mathcal B_{n,A}^{c}}],\\
 &\E\left[
 \frac{\1_{\mathcal B_{n,A}^{c}}}{n}
 \sum_i|\E(D_i^2\mid U)-M_{2,\varpi_n}(U_i)|\right]\\
 &\qquad\le
 n^2\Pp(\mathcal B_{n,A}^{c})
 +C\E[(R+1)^2\1_{\mathcal B_{n,A}^{c}}].
\end{align*}
Here \(0\le D_i\le n\),
\(m_{\varpi_n}(U_i)\le R+1\), and
\(M_{2,\varpi_n}(U_i)\le C(R+1)^2\) were used.  Choosing the
polynomial margin in \eqref{eq:weighted-tail-short} larger than the
displayed powers of \(n\) makes both bounds \(o(1)\).  This removes
the exceptional event.

It remains to identify the averaged continuum variance.  With
\(\delta_n=W_1(\varpi_n,\ell)\), the hinge is one-Lipschitz, so
\[
 e^{-2\delta_n}\overline F_\ell(d\mid u)
 \le\overline F_{\varpi_n}(d\mid u)
 \le e^{2\delta_n}\overline F_\ell(d\mid u).
\]
On \(\delta_n\le1\), integration gives
\[
 |v_{\varpi_n}(u)-v_\ell(u)|
 \le C\delta_n\{1+(u_+)^2\}.
\]
The bound \eqref{eq:W1-short} gives
\(\E\delta_n^2=O(\log n/n)\).  We spell out the uniform
square-integrability used here.  With
\(\Delta_n=\sum_j(U_{(j)}-\bar u_{j,n})^2\),
\[
 \frac1n\sum_iU_i^2
 \le\frac2n\sum_j\bar u_{j,n}^2+\frac{2\Delta_n}{n}.
\]
The deterministic first term is uniformly bounded, while
\(\E\Delta_n^2=O(\log^2n)\) by
\eqref{eq:moment-short}.  Consequently
\[
 \sup_n\E\left(\frac1n\sum_iU_i^2\right)^2<\infty.
\]
Thus the empirical average of the last bound tends to zero in
\(L^1\) by Cauchy--Schwarz.  On \(\delta_n>1\), use
\(n^{-1}\sum_iv_{\varpi_n}(U_i)\le C(R+1)^2\), the fourth moment of
\(R\), and \(\Pp(\delta_n>1)=O(\log n/n)\).  More explicitly,
\[
 \E[(R+1)^2\1_{\{\delta_n>1\}}]
 \le \{\E(R+1)^4\}^{1/2}\Pp(\delta_n>1)^{1/2}=o(1).
\]
Finally
the explicit formulas in \cref{lem:logistic-moments-short} give
\[
 \lim_{u\to-\infty}v_\ell(u)=1,
 \qquad
 \lim_{u\to+\infty}v_\ell(u)=\frac{\pi^2}{3}.
\]
Thus \(v_\ell\) is bounded and continuous, so the logistic empirical
limit gives \(n^{-1}\sum_iv_\ell(U_i)\to\mathcal V(\ell)\).  Combine
this with \eqref{eq:second-bound-short} and the elementary comparison
\[
 |v_{i,n}-v_{\varpi_n}(U_i)|
 \le |\E(D_i^2\mid U)-M_{2,\varpi_n}(U_i)|
 +|m_{i,n}-m_{\varpi_n}(U_i)|
  \{m_{i,n}+m_{\varpi_n}(U_i)\}
\]
to prove
\eqref{eq:finite-var-short}.
\end{proof}

\subsection{First variation of the mean row response}

The fluctuation of \(\Phi_n\) is determined by the first variation
of \(\mathcal M\) at the logistic law.  We first take a compactly
supported signed measure \(h\) of total mass zero and insert
\(\ell+t h\).  The coefficient of \(t\) is the \emph{formal
first variation}.  Whenever \(\ell+t h\) is a probability for a
one-sided interval of \(t\)'s, the same coefficient is the ordinary
directional derivative.  The resulting linear functional extends to
every mass-zero signed measure with finite first absolute moment.  For
such an \(h\), put \(R_h(a):=\int(a+v)_+h(\dd v)\).

\begin{lemma}[Influence function]\label{lem:influence-short}
The formal first variation of \(\mathcal M\) at \(\ell\), and hence
every admissible probability-direction derivative, has the form
\[
 D\mathcal M_\ell[h]=\int f(v)h(\dd v),
\]
where \(f\) is unique up to an additive constant.  If \(x=L(u)\),
\begin{equation}\label{eq:reff-short}
 f'(u)=r(x):=1-\frac{(1-x)b(x)}x-\log x\log(1-x),
\end{equation}
\begin{equation}\label{eq:Aeff-short}
 \int_0^xr(y)\,\dd y
 =A(x):=\log x\{x+(1-x)\log(1-x)\},
 \qquad \int_0^1r(x)\dd x=0.
\end{equation}
\end{lemma}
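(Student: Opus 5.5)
The plan is to differentiate \(\mathcal M(\varpi)=\int m_\varpi(u)\,\varpi(\dd u)\) directly along \(\varpi_t=\ell+th\). The key structural fact is that \(\Lambda_{\varpi}\) is \emph{linear} in \(\varpi\): \(\Lambda_{\varpi_t}=\Lambda_\ell+tR_h\). Consequently
\[
 \overline F_{\varpi_t}(d\mid u)=\overline F_\ell(d\mid u)\exp\{-t[R_h(d-u)-R_h(-u)]\}.
\]
The first variation therefore splits into two terms. The \emph{direct term} \(\int m_\ell(v)\,h(\dd v)\) comes from varying the outer measure. The \emph{response term} is
\[
 -\int\ell(u)\int_0^\infty\overline F_\ell(d\mid u)\{R_h(d-u)-R_h(-u)\}\,\dd d\,\dd u .
\]
Writing out \(R_h\) and applying Fubini turns the response term into \(\int g(v)\,h(\dd v)\), with
\[
 g(v)=-\int\ell(u)\int_0^\infty\frac{L(u-d)}{L(u)}\{(d-u+v)_+-(v-u)_+\}\,\dd d\,\dd u .
\]
So \(f=m_\ell+g\). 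Since \(h\) has mass zero, \(f\) is determined only up to an additive constant, and conversely any two representatives differ by a constant (test against \(h=\delta_a-\delta_b\)). To justify differentiating under the integrals for compactly supported \(h\), I would use the envelope \(|R_h(d-u)-R_h(-u)|\le d\,\|h\|\) together with the Gaussian-free exponential decay \(\overline F_\ell(d\mid u)\le e^{-(d-u_+)_+}\)-type bound from the logistic formula. Dominated convergence then gives both the formal first variation and the one-sided directional derivative whenever \(\ell+th\) is a probability.

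Next I would compute \(f'\). For the direct term, use \(m_\ell(u)=b(x)/x\) from \cref{lem:logistic-moments-short} and \(\dd x/\dd u=x(1-x)\). This gives
\[
 m_\ell'(u)=x(1-x)\Bigl\{\frac{1}{x(1-x)}-\frac{b(x)}{x^2}\Bigr\}=1-\frac{(1-x)b(x)}{x}.
\]
For the response term, the \(v\)-derivative of the hinge difference is \(\1_{\{u-d<v\le u\}}\). Hence
\[
 g'(v)=-\int_{u\ge v}\frac{\ell(u)}{L(u)}\int_{-\infty}^{v}L(s)\,\dd s\,\dd u .
\]
I would evaluate the two factors separately, with \(x=L(v)\):
\[
 \int_{-\infty}^vL(s)\,\dd s=\log(1+e^v)=b(x),\qquad
 \int_v^\infty\frac{\ell}{L}\,\dd u=-\log x .
\]
Thus \(g'(v)=b(x)\log x=-\log x\log(1-x)\). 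Adding the two derivatives gives \eqref{eq:reff-short}.

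For \eqref{eq:Aeff-short}, I would differentiate \(A(x)\) and check that \(A'=r\). I would then check the boundary values: \(A(0^+)=0\), because \(x+(1-x)\log(1-x)=O(x^2)\) kills \(\log x\), and \(A(1)=0\) trivially. This gives both \(\int_0^x r=A(x)\) and \(\int_0^1 r=0\).

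Finally, \(r\) is bounded on \((0,1)\): near \(x=0\), \(r\to0\); near \(x=1\), \(\log x\log(1-x)\to0\). Hence \(f\) is Lipschitz, and \(\int f\,\dd h\) extends continuously to all mass-zero signed measures with finite first absolute moment.

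I expect the main obstacle to be bookkeeping rather than ideas. The difficult points are the Fubini/dominated-convergence justifications in the response term, where \(u\to+\infty\) makes the \(\dd d\)-range long, and checking that the normalization term \(R_h(-u)\) is correctly accounted for in \(g'\). I would handle the tail by the explicit logistic survival \(L(u-d)/L(u)\) and the linear growth of the hinges.
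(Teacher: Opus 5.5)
Your proposal is correct and follows essentially the paper's route: you use the linearity of $\Lambda_\varpi$ in $\varpi$, apply Fubini to get $f=m_\ell+g$, and then differentiate in $v$ using $\int_{-\infty}^v L(s)\,\dd s=b(x)$ and $\int_v^\infty \ell/L=-\log x$. The only cosmetic difference is that you differentiate the combined hinge difference and obtain $\1_{\{u-d<v\le u\}}$ directly, whereas the paper differentiates the two hinge pieces separately and then observes that the $u<v$ contributions cancel.
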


\begin{proof}
Differentiating \eqref{eq:continuum-survival-short} under the
integral gives
\[
 \dot m_h(u)=m_\ell(u)R_h(-u)
 -\frac1{L(u)}\int_0^\infty L(u-d)R_h(d-u)\,\dd d.
\]
Substitute this into
\[
 D\mathcal M_\ell[h]
 =\int_{\mathbb R}m_\ell(v)h(\dd v)
  +\int_{\mathbb R}\dot m_h(u)\ell(u)\,\dd u,
\]
apply Fubini first for compactly supported \(h\), and differentiate
the coefficient of \(h(\dd v)\).  The two indicator derivatives
cancel for \(u<v\); for \(u>v\) one obtains
\[
 f'(v)=m_\ell'(v)-\int_{u>v}\ell(u)
 \int_{d>u-v}\frac{L(u-d)}{L(u)}\,\dd d\,\dd u.
\]
If \(x=L(v)\), the inner integral is \(b(x)/L(u)\), while
\(m_\ell'(v)=1-(1-x)b(x)/x\).  Changing variables \(y=L(u)\) proves
\eqref{eq:reff-short}.  Direct integration gives
\eqref{eq:Aeff-short}; \(A(0)=A(1)=0\).  The resulting \(f'\) is
bounded, so truncation extends this linear functional continuously to
every mass-zero signed measure with finite first absolute moment.
A detailed calculation is given in
\cref{app:influence-derivative}.
\end{proof}

Define the spacing coefficient profile
\begin{equation}\label{eq:Gamma-short}
 \Gamma(x)=-\frac{A(x)}{x(1-x)}.
\end{equation}
The profile \(\Gamma(x)\) is the limiting coefficient with which a
centered exponential gap of rank asymptotic to \(nx\) enters the
linearized fluctuation of \(n^{-1/2}\Phi_n\); its squared
\(L^2(0,1)\)-norm is the environment variance.

\subsection{The exponential-gap triangular array}

By \eqref{eq:ordered-short}, the displacement of the potential field
from its harmonic grid is a linear function of the independent centered
gaps \(E_k-1\), and the coefficients below are exactly those obtained
by applying the first variation to that displacement.  Thus the next
lemma proves the central limit theorem for the linearized environment
response; the following remainder and gradient estimates show that it
also governs the original functional \(\Phi_n\).

\begin{lemma}[Environment triangular array]\label{lem:env-array-short}
Put
\[
 a_{n,k}=\frac1{\sqrt n}\left[
 \frac1{n-k}\sum_{j>k}f'(\bar u_{j,n})
 -\frac1k\sum_{j\le k}f'(\bar u_{j,n})\right].
\]
Then
\begin{equation}\label{eq:array-short}
 \max_k|a_{n,k}|\to0,\qquad
 \sum_ka_{n,k}^2\to
 v_{\rm env}:=\int_0^1\Gamma(x)^2\dd x
 =6\zeta(2)-8\zeta(3),
\end{equation}
and
\begin{equation}\label{eq:array-clt-short}
 \sum_{k=1}^{n-1}a_{n,k}(E_k-1)
 \Longrightarrow\mathcal N(0,v_{\rm env}).
\end{equation}
\end{lemma}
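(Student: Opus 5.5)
The argument runs through a Riemann-sum identification of the coefficients, followed by the Lindeberg--Feller theorem for weighted centered exponentials. Write $x_j=j/n$ and recall that $\bar u_{j,n}=\mathsf H_{j-1}-\mathsf H_{n-j}$ differs from the logistic quantile $\log\{j/(n+1-j)\}$ by $O(j^{-1}+(n+1-j)^{-1})$. Since $f'=r\circ L$ by \cref{lem:influence-short}, we have
\[
 f'(\bar u_{j,n})=r(x_j)+\epsilon_{j,n}.
\]
The function $r$ is bounded, and $|r'(x)|\lesssim x^{-1}+(1-x)^{-1}$ up to logarithms. Hence $|\epsilon_{j,n}|\lesssim (j^{-1}+(n-j)^{-1})\log n$. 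The key algebraic input is $\int_0^1 r=0$ from \eqref{eq:Aeff-short}. It gives
\[
 \frac1n\sum_{j}r(x_j)=O(\log n/n),
\]
so the two block averages in $a_{n,k}$ can be merged. Writing $S_k=\frac1n\sum_{j\le k}f'(\bar u_{j,n})$ and $T=\frac1n\sum_j f'(\bar u_{j,n})$,
\[
 \sqrt n\,a_{n,k}=\frac{n}{n-k}(T-S_k)-\frac nk S_k
 =\frac{n}{n-k}T-\frac{n^2}{k(n-k)}S_k .
\]

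For the first claim, boundedness of $f'$ gives $|S_k|\le Ck/n$ and $|T|\le C$. Therefore $|\sqrt n\,a_{n,k}|\le C'$, and $\max_k|a_{n,k}|=O(n^{-1/2})\to0$. This settles the first part of \eqref{eq:array-short}.

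For the variance, I would set $x=k/n$ and show that $S_k=A(x)+\eta_{n,k}$ and $T=\eta_n$. Here $\eta_n=O(\log^2 n/n)$ comes from the $\epsilon$-errors, and $|\eta_{n,k}|\lesssim \log^2 n/n$ uniformly, by comparing the Riemann sum of $r$ with $\int_0^x r=A(x)$ and using the integrable singularity of $r'$. Then
\[
 \sqrt n\,a_{n,k}=-\frac{A(x)}{x(1-x)}+\frac{\mathrm{err}_{n,k}}{x(1-x)}=\Gamma(x)+\mathrm{err}'_{n,k}.
\]
Consequently $\sum_k a_{n,k}^2=\frac1n\sum_k\Gamma(k/n)^2+o(1)$, provided the errors are controlled near the endpoints. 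The expected main obstacle is exactly this edge control, for $k$ or $n-k$ of order $o(n)$, where the factor $n^2/(k(n-k))$ amplifies the Riemann-sum errors.

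There I would not use the expansion at all. Instead I would bound $a_{n,k}$ directly: $|S_k|\le Ck/n$ and $T=O(\log^2n/n)$ give $|\sqrt n\,a_{n,k}|\le C$. The indices with $k\le\delta n$ or $n-k\le\delta n$ then contribute at most $C\delta$ to $\sum_k a_{n,k}^2$, which is uniformly small. On the bulk $\delta n<k<(1-\delta)n$, the Riemann-sum convergence is uniform. Also $\Gamma$ is bounded near $0$ and $1$: as $x\to0$, $A(x)\sim x\log x\cdot(x/2)$ after cancellation, and at $1$ the expansion is symmetric. So letting $\delta\to0$ gives $\sum_k a_{n,k}^2\to\int_0^1\Gamma^2$.

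It then remains to evaluate $\int_0^1 \Gamma(x)^2\,\dd x=6\zeta(2)-8\zeta(3)$. I would expand $A(x)/(x(1-x))=\log x\,\{1/(1-x)+\log(1-x)/x\}$, square it, and integrate the three resulting terms. Each is reduced to a standard integral: $\int\log^2x/(1-x)^2$, $\int\log^2x\log(1-x)/(x(1-x))$, and $\int\log^2x\log^2(1-x)/x^2$. The reductions use power series in $x$ together with the known values $\int_0^1\log^2x\,x^{m}\,\dd x=2/(m+1)^3$ and Euler-sum identities such as $\sum \mathsf H_m/m^2=2\zeta(3)$. I expect the answer to be assembled from these pieces, and I would check it numerically against the stated constant.

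Finally, \eqref{eq:array-clt-short} follows from Lindeberg--Feller. The summands $a_{n,k}(E_k-1)$ are independent, centered, and have variances $a_{n,k}^2$, whose sum converges by \eqref{eq:array-short}. The Lyapunov ratio satisfies
\[
 \sum_k|a_{n,k}|^3\,\E|E_1-1|^3\le\max_k|a_{n,k}|\sum_k a_{n,k}^2\cdot C\to0,
\]
which completes the argument.
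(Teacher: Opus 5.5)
Your structure matches the paper's proof: uniform Riemann-sum convergence of $\sqrt n\,a_{n,k}$ to $\Gamma(k/n)$ on the central block, a uniform bound on the two edge blocks, then $\delta\downarrow0$, then Lindeberg--Feller (your Lyapunov bound is fine). There is one slip. You derive $|\sqrt n\,a_{n,k}|\le C$ from the merged form using only $|S_k|\le Ck/n$ and $|T|\le C$ (or $T=O(\log^2n/n)$). That form only gives $\frac{n}{n-k}(|T|+C)$, which is unbounded when $n-k=o(n)$. Use the original form instead: each bracketed quantity is an average of values of $f'=r\circ L$, so $|\sqrt n\,a_{n,k}|\le2\|r\|_\infty$ for every $k$. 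This gives both $\max_k|a_{n,k}|\to0$ and your edge estimate. Also, $\Gamma(x)\to1$ as $x\uparrow1$ because $A(x)\sim-(1-x)$; the endpoint behaviour is not ``symmetric'', though only boundedness is used.

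The genuine omission is the constant. The identity $\int_0^1\Gamma^2=6\zeta(2)-8\zeta(3)$ is part of the statement, and ``I expect the answer to be assembled from these pieces'' plus a numerical check does not prove it. Your three-integral route does close. Use $\frac{\log(1-x)}{1-x}=-\sum_{m\ge1}\mathsf H_mx^m$, $\log^2(1-x)=\sum_{m\ge2}\frac{2\mathsf H_{m-1}}{m}x^m$, $\int_0^1x^m\log^2x\dd x=2/(m+1)^3$, and $\frac1{p^3(p+1)}=\frac1{p^3}-\frac1{p^2}+\frac1p-\frac1{p+1}$. Then
\[
 \int_0^1\frac{\log^2x}{(1-x)^2}\dd x=2\zeta(2),\qquad
 2\int_0^1\frac{\log^2x\,\log(1-x)}{x(1-x)}\dd x=-4\sum_{m\ge1}\frac{\mathsf H_m}{m^3},
\]
\[
 \int_0^1\frac{\log^2x\,\log^2(1-x)}{x^2}\dd x
 =4\sum_{p\ge1}\frac{\mathsf H_p}{p^3(p+1)}
 =4\sum_{p\ge1}\frac{\mathsf H_p}{p^3}-8\zeta(3)+4\zeta(2).
\]
The last step uses $\sum_p\mathsf H_p/p^2=2\zeta(3)$ and $\sum_p\mathsf H_p/\{p(p+1)\}=\zeta(2)$; the latter follows by swapping the order of summation. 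The weight-four sums $\sum\mathsf H_m/m^3$ cancel, so you never need their value, and the total is $6\zeta(2)-8\zeta(3)$.

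The paper instead expands $\{x+(1-x)\log(1-x)\}/\{x(1-x)\}=\sum_{k\ge1}\frac{k}{k+1}x^k$, squares the series, and groups terms by $m=k+l$. It then reduces to two elementary sums, again using Euler's $\sum_r\mathsf H_r/r^2=2\zeta(3)$. The two computations are of comparable length, but you need to actually carry one out.
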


\begin{proof}
The harmonic grid satisfies
\(L(\bar u_{j,n})\asymp j/n\) and
\(1-L(\bar u_{j,n})\asymp(n+1-j)/n\), uniformly in \(j\), and has
the sharper approximation \(L(\bar u_{j,n})=j/n+O_\varepsilon(n^{-1})\)
on central indices.  Riemann sums and \eqref{eq:Aeff-short} therefore
give, uniformly for \(\varepsilon n\le k\le(1-\varepsilon)n\),
\[
 \sqrt n\,a_{n,k}\longrightarrow
 \frac1{1-x}\int_x^1r(y)\dd y-
 \frac1x\int_0^xr(y)\dd y
 =\Gamma(x),\qquad x=k/n.
\]
The endpoint behavior is explicit:
\[
 r(x)=x\log x+O(x)\quad(x\downarrow0),
 \qquad r(x)\longrightarrow1\quad(x\uparrow1),
\]
and, from \eqref{eq:Aeff-short}--\eqref{eq:Gamma-short},
\[
 \Gamma(x)\longrightarrow0\quad(x\downarrow0),
 \qquad \Gamma(x)\longrightarrow1\quad(x\uparrow1).
\]
Thus both discrete averages and \(\Gamma\) are uniformly bounded on
the endpoint blocks.
Letting \(\varepsilon\downarrow0\) proves the first two assertions in
\eqref{eq:array-short}.  Lindeberg--Feller applies to
\(a_{n,k}(E_k-1)\): if \(\alpha_n=\max_k|a_{n,k}|\), its Lindeberg
sum is bounded by
\[
 \left(\sum_ka_{n,k}^2\right)
 \E\bigl[(E_1-1)^2\1_{\{|E_1-1|>\varepsilon/\alpha_n\}}\bigr]
 \longrightarrow0.
\]

For the constant, expand positively
\[
 \frac{x+(1-x)\log(1-x)}{x(1-x)}
 =\sum_{k\ge1}\frac{k}{k+1}x^k
\]
and use \(\int_0^1x^m\log^2x\dd x=2/(m+1)^3\).  Grouping by
\(m=k+l\) gives
\[
 v_{\rm env}=2\sum_{m\ge2}\frac1{(m+1)^3}
 \left[m-1-\frac{2(m+1)}{m+2}(\mathsf H_m-1)\right].
\]
The two elementary sums
\[
 \sum_{m\ge2}\frac{m-1}{(m+1)^3}
 =\zeta(2)-2\zeta(3)+1,
\]
\[
 \sum_{m\ge2}\frac{\mathsf H_m-1}{(m+1)^2(m+2)}
 =\zeta(3)-\zeta(2)+\frac{1}{2}
\]
then yield \(6\zeta(2)-8\zeta(3)\).  The second identity follows by
partial fractions, summation by parts, and Euler's
\(\sum_{r\ge1}\mathsf H_r/r^2=2\zeta(3)\).
\end{proof}

To connect the continuum derivative to the finite functional, we use
the following direct hinge estimate.  It is the only nonlinear
remainder estimate in the proof.

\subsection{A direct nonlinear remainder bound}

\begin{lemma}[Direct two-cloud linearization]\label{lem:linearization-short}
If \(U,V\in\R^n\) have the same strict coordinate order, \(z=U-V\),
and \(\bar z=n^{-1}\sum_i z_i\), then
\begin{equation}\label{eq:hinge-linear-short}
 |\Phi_n(U)-\Phi_n(V)-D\Phi_n(V)[z]|
 \le C\{1+\osc U+\osc V\}
 \|z-\bar z\mathbf1\|_2^2.
\end{equation}
\end{lemma}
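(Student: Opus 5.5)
The plan is to reduce \eqref{eq:hinge-linear-short} to a second-derivative bound along the segment \(U_s=V+sz\), \(s\in[0,1\)], using the integral form of Taylor's theorem.

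\emph{Centering.} \(H_i(d;U)\) depends on \(U\) only through the differences \(U_j-U_i\), so \(\Phi_n(U+c\mathbf1)=\Phi_n(U)\) and \(D\Phi_n(V)[\mathbf1]=0\). Hence \(z\) may be replaced by \(z-\bar z\mathbf1\), and we assume \(\bar z=0\) from now on.

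\emph{Rewriting \(\Phi_n\).} Substitute \(t=U_i-d\) in \eqref{eq:Phi-short} and put
\[
 G_U(t)=\frac1n\sum_j(U_j-t)_+ .
\]
Then
\[
 \Phi_n(U)=\sum_i\int_{-\infty}^{U_i}\exp\{G_U(U_i)-G_U(t)\}\,\dd t .
\]
The function \(G_U\) is convex and nonincreasing, with slope \(-\#\{j:U_j>t\}/n\). In particular its slope equals \(-1\) below \(\min_jU_j\).

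Two consequences follow.
\begin{itemize}
\item The integrand \(e^{G(U_i)-G(t)}\) is at most \(1\) on the whole range \(t\le U_i\).
\item It decays like \(e^{-(\min U-t)}\) below \(\min U\).
\end{itemize}
Therefore
\[
 \int_{-\infty}^{U_i}e^{G(U_i)-G(t)}\,\dd t\le 1+\osc U,
\]
and every weighted version of this integral, with weights bounded by powers of \(|\partial_sG|\le\|z\|_\infty\), is controlled in the same way.

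\emph{Second derivative along the segment.} Because \(U_s\) keeps the same strict order for all \(s\in[0,1]\), the positions of the points \(U_i(s)\) relative to one another never change. So \(s\mapsto G_{U_s}(U_i(s))\) is smooth: it is a fixed linear combination of differences \(U_j-U_i\) with \(U_j>U_i\). The only nonsmoothness is in the \(t\)-variable, at \(t=U_j(s)\).

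Differentiating twice in \(s\) produces three kinds of terms.
\begin{enumerate}
\item Squares of first variations, such as \((\partial_sG(U_i)-\partial_sG(t))^2\) with
\[
 \partial_sG(t)=\frac1n\sum_jz_j\1_{\{U_j>t\}} .
\]
By Cauchy--Schwarz these are at most \(\|z\|_2^2/n\). Integrated against the kernel they give \((1+\osc)\|z\|_2^2/n\) per row, hence \(C(1+\osc)\|z\|_2^2\) after summing over \(i\).
\item Boundary terms from the moving upper limit \(U_i(s)\). These involve \(z_i^2\) and \(z_i\,\partial_sG\). Since the integrand equals \(1\) at \(t=U_i\), they contribute at most \(C\|z\|_2^2\).
\item Distributional terms from \(\partial_s^2(U_j-t)_+=z_j^2\,\delta_{\{t=U_j\}}\), together with the mixed term of the form \(\frac1n z_j z_i\) coming from the kink of \(G\) passing through \(U_i\). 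The \(\delta\)-terms evaluate the kernel at \(t=U_j<U_i\), giving
\[
 \frac1n\sum_jz_j^2\sum_{i:U_i>U_j}e^{G(U_i)-G(U_j)}\le\sum_jz_j^2 ,
\]
because each exponential is at most \(1\) and there are at most \(n\) indices \(i\).
\end{enumerate}

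Collecting these, \(|\partial_s^2\Phi_n(U_s)|\le C(1+\osc U_s)\|z\|_2^2\). The oscillation is monotone along a same-order segment, so \(\osc U_s\le\osc U+\osc V\). Taylor's formula \(\Phi(1)-\Phi(0)-\Phi'(0)=\int_0^1(1-s)\Phi''(s)\,\dd s\) then gives \eqref{eq:hinge-linear-short}.

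\emph{Expected obstacle.} The main difficulty is making the differentiation rigorous in the \(\delta\)-terms. \(G\) is only piecewise linear in \(t\), so \(\partial_s^2\) must be taken in the distributional sense inside the \(t\)-integral.

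The first approach would be to split \((-\infty,U_i]\) at the ordered points \(U_{(k)}(s)\). On each piece \(G\) is affine, so each piece integral is an explicit smooth function of the endpoints, namely \((e^{\alpha b}-e^{\alpha a})/\alpha\)-type expressions. One then differentiates these closed forms and checks that the endpoint contributions telescope into the \(\delta\)-terms above.

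A fallback is to mollify the hinge, prove the bound uniformly in the mollification parameter, and pass to the limit by dominated convergence using the \(1+\osc\) envelope.
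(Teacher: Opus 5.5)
Your argument is correct, but it runs through a second-derivative bound along the segment, whereas the paper never differentiates twice.

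The paper stays in the variable $d$, where the integration domain $[0,\infty)$ does not move. It writes $H_i(d;U)-H_i(d;V)=L_i(d)+R_i(d)$, with $L_i$ the indicator part and $R_i$ a sum of nonnegative hinge triangles of total area $(2n)^{-1}\sum_{k<i}(z_i-z_k)^2$. It then applies the pointwise inequality $0\le e^{-b}-e^{-a}+e^{-a}(b-a)\le\tfrac12(b-a)^2(e^{-a}+e^{-b})$ together with $|H_i(d;U)-H_i(d;V)|^2\le n^{-1}\sum_{k<i}(z_i-z_k)^2$. Centering enters only at the end, through $\sum_{i>k}(z_i-z_k)^2=n\|z-\bar z\mathbf1\|_2^2$.

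The two arguments match term by term. Your $\delta$-terms are the infinitesimal form of the paper's triangles: in the $d$ variable, $\int_0^1(1-s)\,\partial_s^2H_i(d;U_s)\,\dd s$ is exactly $R_i(d)$. Your squared first variations correspond to the paper's quadratic exponential remainder.

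The paper's route needs no $C^2$ regularity of $s\mapsto\Phi_n(U_s)$ and produces no boundary terms. Yours requires exactly the justification you flag. It does go through, because the preserved strict order freezes the combinatorics; for instance, write the first $s$-derivative of each row integral as a finite combination of tail integrals $\int_\theta^\infty e^{-H_i}$ and differentiate once more by Leibniz. Working in $t=U_i-d$ also makes individual terms non-translation-invariant (the boundary term is $-\frac{n-i+1}{n}z_i^2$). Your centering step is therefore genuinely needed there, whereas in the $d$ variable every term is expressed through $z_i-z_k$.

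One bookkeeping correction. The square that actually appears inside the integral is $(g_i'(s)-\partial_sG(t))^2$, with $g_i(s)=G_{U_s}(U_i(s))$. Here $g_i'$ contains the chain-rule piece $-\frac{\#\{j:U_j>U_i\}}{n}z_i$, which is of size $|z_i|$ rather than $\|z\|_2/\sqrt n$. It is integrated against the whole kernel, so it contributes $O\{(1+\osc U_s)z_i^2\}$ per row rather than a pure boundary term. This still sums to $C(1+\osc U_s)\|z\|_2^2$ after centering, so your conclusion stands.
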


\begin{proof}
Relabel so that \(V_1<\cdots<V_n\) and \(U_1<\cdots<U_n\).  For
\(k<i\), put \(t_{ik}=V_i-V_k\), \(h_{ik}=z_i-z_k\).  Then
\[
 H_i(d;V)=\frac{n-i+1}{n}d+
 \frac1n\sum_{k<i}(d-t_{ik})_+.
\]
For one moving threshold,
\[
 R_{\rm hinge}(d;t,h)
 =(d-t-h)_+-(d-t)_++h\1_{\{d>t\}}
\]
is nonnegative and is supported on the interval with endpoints
\(t\) and \(t+h\).  Indeed, with \(y=d-t\),
\[
 R_{\rm hinge}=\begin{cases}
 h-y,& h>0,\ 0<y<h,\\
 y-h,& h<0,\ h<y<0,\\
 0,&\text{otherwise}.
 \end{cases}
\]
The common-order assumption gives \(t>0\) and \(t+h>0\), and the
two triangles have area \(h^2/2\).  Therefore
\[
 H_i(d;U)-H_i(d;V)=L_i(d)+R_i(d),
\]
where
\[
 L_i(d)=-\frac1n\sum_{k<i}h_{ik}\1_{\{d>t_{ik}\}},\qquad
 R_i(d)=\frac1n\sum_{k<i}R_{\rm hinge}(d;t_{ik},h_{ik}),
\]
and \(\int R_i=(2n)^{-1}\sum_{k<i}h_{ik}^2\).  Difference quotients
in the ordered-hinge formula converge away from the finitely many
thresholds and are dominated by an integrable exponential tail.
Thus
\[
 D\Phi_n(V)[z]
 =-\sum_i\int_0^\infty e^{-H_i(d;V)}L_i(d)\,\dd d.
\]
Indeed, along a sufficiently short segment \(V+sz\),
\(H_i(d;V+sz)\ge(d-R_0)_+\) for a fixed \(R_0\), while the hazard
difference quotient is bounded by
\(c_i:=n^{-1}\sum_{k<i}|h_{ik}|\).  The mean-value theorem therefore
gives the explicit integrable domination
\[
 \left|
 \frac{e^{-H_i(d;V+sz)}-e^{-H_i(d;V)}}s
 \right|
 \le c_i e^{-(d-R_0)_+},
 \qquad |s|\le s_0.
\]
This proves the differentiation under the integral by dominated
convergence.

The elementary
inequality, for \(a,b\ge0\),
\[
 0\le e^{-b}-e^{-a}+e^{-a}(b-a)
 \le\frac{(b-a)^2}{2}(e^{-a}+e^{-b})
\]
applied with \(a=H_i(d;V)\), \(b=H_i(d;U)\) gives
\[
 e^{-b}-e^{-a}+e^{-a}L_i
 =\{e^{-b}-e^{-a}+e^{-a}(b-a)\}-e^{-a}R_i.
\]
Both the term in braces and \(R_i\) are nonnegative.  Hence the
absolute value is bounded by the sum of the quadratic exponential
remainder and \(e^{-a}R_i\).  Together with the bound
\[
 |H_i(d;U)-H_i(d;V)|^2
 \le\frac1n\sum_{k<i}h_{ik}^2
\]
this shows, after integrating and using
\(\int e^{-H_i(d;W)}\dd d\le1+\osc W\), that the Taylor remainder is
at most
\[
 \frac{C(1+\osc U+\osc V)}n\sum_{i>k}(z_i-z_k)^2.
\]
The identity
\(\sum_{i>k}(z_i-z_k)^2=n\sum_i(z_i-\bar z)^2\) proves
\eqref{eq:hinge-linear-short}.
\end{proof}

\subsection{The finite gradient on the harmonic grid}

\begin{lemma}[Harmonic-grid gradient]\label{lem:grid-gradient-short}
At the harmonic grid,
\begin{equation}\label{eq:gradient-short}
 \frac1n\sum_{j=1}^n
 \left|\partial_{U_j}\Phi_n(\bar u_n)
 -r\{L(\bar u_{j,n})\}\right|^2\longrightarrow0.
\end{equation}
\end{lemma}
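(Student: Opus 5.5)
I will compute the partial derivative \(\partial_{U_j}\Phi_n\) at the grid exactly, and then identify it as a Riemann-sum discretization of the influence derivative \(f'(u)\) from \cref{lem:influence-short}. Write \(\bar u_n\) increasingly ordered. By the ordered-hinge representation in the proof of \cref{lem:linearization-short}, with \(t_{ik}=\bar u_{i,n}-\bar u_{k,n}\) for \(k<i\), we have
\[
 \partial_{U_j}H_i(d;\bar u_n)=-\frac1n\1_{\{d>t_{ij}\}}\1_{\{j<i\}}
 +\frac1n\1_{\{i=j\}}\sum_{k<i}\1_{\{d>t_{ik}\}}.
\]
Differentiating under the integral (the domination argument from \cref{lem:linearization-short}) then gives
\[
 \partial_{U_j}\Phi_n(\bar u_n)
 =\frac1n\sum_{i>j}\int_{t_{ij}}^\infty e^{-H_i(d)}\dd d
 -\frac1n\int_0^\infty e^{-H_j(d)}\#\{k<j:t_{jk}<d\}\,\dd d .
\]
The first sum is the term involving \(\int_{u>v}\ell(u)\int_{d>u-v}\overline F\) in the proof of \cref{lem:influence-short}. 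The second term equals \(\int_0^\infty e^{-H_j}\,(\partial_d H_j-\tfrac{n-j+1}{n})\dd d\). Integrating by parts, this becomes \(1-\frac{n-j+1}{n}\,m_{\varpi}(\bar u_{j,n})\), which matches \(m_\ell'(v)=1-(1-x)m_\ell\cdot\) (the continuum analogue, since \(m'_\ell(v)=1-(1-x)b(x)/x\) and \(m_\ell=b/x\)). So the decomposition is exact term by term.

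Next, I replace the grid empirical law \(\bar\varpi_n\) by \(\ell\) in each piece. The survivals \(e^{-H_i(d;\bar u_n)}\) equal \(\overline F_{\bar\varpi_n}(d\mid\bar u_{i,n})\). By the one-Lipschitz hinge bound used in \cref{lem:finite-row-short}, they lie within factors \(e^{\pm2\delta}\) of \(\overline F_\ell\), where \(\delta=W_1(\bar\varpi_n,\ell)=O(\log n/n)\) by \eqref{eq:W1-short}. Hence the diagonal term differs from \(m_\ell'(\bar u_{j,n})\) by \(O(\delta(1+|\bar u_{j,n}|))\). Since the grid has \(n^{-1}\sum_j\bar u_{j,n}^2=O(1)\), this error is negligible in the averaged square.

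For the off-diagonal sum, define \(G(u,v)=\int_{u-v}^\infty \overline F_\ell(d\mid u)\dd d=b(L(v))/L(u)\), which is the closed form computed in \cref{lem:influence-short}. The sum \(\frac1n\sum_{i>j}G(\bar u_{i,n},\bar u_{j,n})\) is a Riemann sum, in the variable \(y=L(\bar u_{i,n})\approx i/n\), for \(b(x)\int_x^1 y^{-1}\dd y=-b(x)\log x\). The error comes from three sources: replacing \(L(\bar u_{i,n})\) by \(i/n\), which costs \(O(1/i+1/(n+1-i))\); the Riemann discretization; and the \(\bar\varpi_n\) versus \(\ell\) perturbation. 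The combined error is \(O(b(x_j)\log n/n)+O(b(x_j)/j)\). Then \(r(x)=1-(1-x)b/x-\log x\log(1-x)\) is recovered exactly.

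The main obstacle is the endpoint indices, where \(j\) or \(n-j\) is small. There, \(1/L(u)\) is large for small \(i\), and \(b(x)\) and \(\log x\) blow up logarithmically. I will handle this by bounding each term uniformly: for every \(j\), both \(|\partial_{U_j}\Phi_n(\bar u_n)|\) and \(|r|\) are \(O(\log n)\). The derivative bound follows from \(\frac1n\sum_i 1/L(\bar u_{i,n})\lesssim\log n\), together with the envelope \(\int e^{-H}\le 1+\osc\), where \(\osc\bar u_n=O(\log n)\). The central-range errors are \(o(1)\) uniformly for \(\varepsilon n\le j\le(1-\varepsilon)n\). Therefore the average in \eqref{eq:gradient-short} is bounded by \(o(1)+C\varepsilon\log^2 n\). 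To close this, I will take \(\varepsilon=\varepsilon_n\to0\) slowly and use the sharper per-index bound \(O((\log n)/j+\log^2 n/n)\). Its square, summed and divided by \(n\), gives \(O(\log^2n/n)\), which avoids the \(\varepsilon\log^2n\) loss.
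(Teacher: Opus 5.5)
\textbf{Sign error in the gradient.} Your formula for \(\partial_{U_j}H_i\) has both signs reversed. For \(j<i\), raising \(U_j\) lowers the threshold \(t_{ij}=U_i-U_j\) and so raises \((d-t_{ij})_+\). Hence \(\partial_{U_j}H_i=+\frac1n\1_{\{d>t_{ij}\}}\). Raising \(U_i\) raises every \(t_{ik}\), so \(\partial_{U_i}H_i=-\frac1n\sum_{k<i}\1_{\{d>t_{ik}\}}\). This is the functional \(L_i\) in the proof of \cref{lem:linearization-short} evaluated at \(z=e_j\). The true gradient is therefore
\[
 \partial_{U_j}\Phi_n(\bar u_n)=\frac1n\sum_{k<j}G_j(t_{jk})-\frac1n\sum_{i>j}G_i(t_{ij}),
 \qquad G_i(s)=\int_s^\infty e^{-H_i(d;\bar u_n)}\dd d,
\]
which is \eqref{eq:exact-gradient-short}. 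Your displayed formula is exactly its negative. Taken literally, it converges to \(-r(x)\): your cross sum tends to \(b(x)\log(1/x)\) and your own-row term to \(-m_\ell'\). The term-by-term match with \(f'=m_\ell'-\int_{u>v}\cdots\) from \cref{lem:influence-short} only appears to work because you drop the minus sign in front of your second term while keeping the plus sign on the first. This is a computational slip rather than a missing idea, but as written the step fails and must be corrected.

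\textbf{After the fix.} Once the sign is corrected, your argument works and follows a genuinely different route from the paper. The paper treats both sums as Riemann sums for \(x^{-1}\int_0^xb\) and \(-b(x)\log x\). It bounds each by \(20e^4\) uniformly in \(j,n\), using the harmonic-number bounds on \(x_j\). It then closes softly: convergence on \([\varepsilon n,(1-\varepsilon)n]\), endpoint blocks contributing \(O(\varepsilon)\), then \(n\to\infty\) and \(\varepsilon\downarrow0\).

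You instead evaluate the own-row sum exactly by integrating \(e^{-H_j}\partial_dH_j\), which gives
\[
\frac1n\sum_{k<j}G_j(t_{jk})=1-\frac{n-j+1}{n}\,m_{\bar\varpi_n}(\bar u_{j,n}).
\]
This is the exact finite analogue of \(m_\ell'=1-(1-x)m_\ell\). It removes one Riemann sum and one endpoint bound, and it shows the term lies in \([0,1]\). You then track explicit per-index errors.

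There is one omission there. In the diagonal term you also need
\[
\frac{n-j+1}{n}-(1-x_j)=x_j-\frac{j-1}{n}=O(1/n),
\]
which adds \(O\bigl(n^{-1}(1+|\bar u_{j,n}|)\bigr)\). This is harmless. With \(b(x_j)\le\log n+C\) and \(|\bar u_{j,n}|\le\log n+1\), you get
\[
|\partial_{U_j}\Phi_n(\bar u_n)-r(x_j)|\le C\bigl(\log n/j+\log^2n/n\bigr)
\]
for every \(j\). The averaged square is then \(O(\log^2n/n)\), a rate the paper's argument does not provide.

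Your \(\varepsilon\)-split with the crude \(O(\log n)\) bound is therefore redundant. It gives \(o(1)+C\varepsilon\log^2n\), which does not close for fixed \(\varepsilon\), and a ``slowly'' vanishing \(\varepsilon_n\) would have to be \(o(\log^{-2}n)\). Simply state the per-index bound for all \(j\) and sum. The paper needs its \(\varepsilon\)-split because it only has compact-range convergence, and it avoids the \(\log^2n\) loss by proving an \(O(1)\) uniform bound.
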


\begin{proof}
Let
\[
 G_{i,U}(s):=\int_s^\infty e^{-H_i(d;U)}\,\dd d
\]
denote the integrated continuum survival in row \(i\) beyond level
\(s\).  Direct
differentiation gives
\begin{equation}\label{eq:exact-gradient-short}
 \partial_{U_k}\Phi_n(U)=
 \frac1n\sum_{j:U_j<U_k}G_{k,U}(U_k-U_j)
 -\frac1n\sum_{i:U_i>U_k}G_{i,U}(U_i-U_k).
\end{equation}
By \eqref{eq:W1-short}, the empirical harmonic grid is
\(O(\log n/n)\) from the logistic law in \(W_1\).  Hinge Lipschitz
continuity therefore compares each
survival in \eqref{eq:exact-gradient-short}, relatively and uniformly,
with \(L(u-d)/L(u)\).  The self index \(j=i\) is included here;
isolating it changes the hinge by \(d/n\), which is
\(O(\log n/n)\) on the logarithmic range, while the exponential
envelope controls the tail.  The same pointwise multiplicative bounds
pass to \(G_{i,U}\) by integration.  Since, for \(v<u\),
\[
 \int_{u-v}^\infty\frac{L(u-d)}{L(u)}\dd d
 =\frac{b\{L(v)\}}{L(u)},
\]
write \(x_j=L(\bar u_{j,n})\).  The two sums in
\eqref{eq:exact-gradient-short} are Riemann sums for
\[
 \frac1x\int_0^xb(y)\dd y-b(x)\int_x^1\frac{\dd y}{y}=r(x).
\]
We spell out the endpoint control, because convergence on compact
subintervals would not by itself imply the stated \(L^2\) result.
Here \(\mathsf H_0=0\); for \(m\ge1\), the elementary bounds
\[
 \log(m+1)\le \mathsf H_m\le1+\log m
\]
apply.  Treating the \(m=0\) endpoint separately, they imply, for
every \(j\),
\[
 e^{-2}\frac{j}{n+1}\le x_j\le
 e^2\frac{j}{n+1},\qquad
 e^{-2}\frac{n+1-j}{n+1}\le1-x_j\le
 e^2\frac{n+1-j}{n+1}.
\]
Using \(b(x)\le2x\) for \(x\le1/2\), and the second displayed
bound otherwise, split at \(n/2\) to obtain, for example,
\[
 \frac1{nx_k}\sum_{j<k}b(x_j)\le20e^4,
 \qquad
 \frac{b(x_k)}n\sum_{i>k}\frac1{x_i}\le20e^4.
\]
Indeed, the first bound reduces to the bounded averages of
\(1+\log\{(n+1)/(n+1-j)\}\).  For the second, if \(k\le n/2\), use
\((k/n)\sum_{i>k}i^{-1}\le1\); if \(k>n/2\), use
\((n-k)n^{-1}\{1+\log(n/(n-k))\}\le1\).  The numerical constant is
deliberately loose but independent of \(n\) and \(k\).

Without using \cref{lem:influence-short}, the finite-gradient
calculation gives on central indices
\[
 \widetilde r(x)=\frac1x\int_0^xb(y)\,\dd y
 -b(x)\int_x^1\frac{\dd y}{y}.
\]
Since
\(\int_0^xb(y)\dd y=x+(1-x)\log(1-x)\) and
\(\int_x^1y^{-1}\dd y=-\log x\), direct simplification gives
\(\widetilde r(x)=r(x)\) from \eqref{eq:reff-short}.
On \(\varepsilon n\le k\le(1-\varepsilon)n\), the
sharper estimate \(x_j=j/n+O_\varepsilon(n^{-1})\), after splitting
off the two endpoint pieces, gives uniform Riemann-sum convergence
with error \(o(1)\) for each fixed \(\varepsilon\).  Since \(r\) is
bounded, the displayed
uniform bounds give
\[
 \limsup_{n\to\infty}\frac1n
 \sum_{k\notin[\varepsilon n,(1-\varepsilon)n]}
 \left|\partial_{U_k}\Phi_n(\bar u_n)-r(x_k)\right|^2
 \le2\varepsilon\bigl(2\cdot20e^4+\|r\|_\infty\bigr)^2.
\]
First let \(n\to\infty\), then
\(\varepsilon\downarrow0\), to prove \eqref{eq:gradient-short}.
\end{proof}

\begin{remark}[Independent finite-gradient check]
\label{rem:finite-gradient-check}
The identity \(\widetilde r=r\) above is an independent check of the
continuum influence calculation in \cref{lem:influence-short}.
Indeed, changing one potential \(U_k\) has two effects in the finite
functional: it changes the row whose source is \(k\), and it changes
the appearance of \(U_k\) as a target in every other row.  These are
the two sums, with opposite signs, in
\eqref{eq:exact-gradient-short}.  Passing that exact finite gradient
to its Riemann-sum limit produces \(\widetilde r\), whereas
\cref{lem:influence-short} differentiates the continuum functional
directly and produces \(r\).  Their algebraic equality therefore
checks both occurrences of the empirical potential law; it is not
being deduced from the continuum differentiation.
\end{remark}

\begin{lemma}[Two-sided discrete Hardy bound]\label{lem:hardy-short}
For every \(e_1,\ldots,e_n\in\R\),
\[
 \sum_{k=1}^{n-1}\left|\frac1{\sqrt n}\left\{
 \frac1{n-k}\sum_{j>k}e_j-
 \frac1k\sum_{j\le k}e_j\right\}\right|^2
 \le16\frac1n\sum_{j=1}^n|e_j|^2.
\]
\end{lemma}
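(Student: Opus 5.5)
The plan is to split the two-sided average into a prefix part and a suffix part, and to control each with the classical discrete Hardy inequality, which has constant \(4\). Multiplying the claimed inequality by \(n\), it is equivalent to
\[
 \sum_{k=1}^{n-1}\bigl|B_k-A_k\bigr|^2\le16\sum_{j=1}^n e_j^2,
 \qquad
 A_k=\frac1k\sum_{j\le k}e_j,\quad B_k=\frac1{n-k}\sum_{j>k}e_j .
\]
The elementary bound \(|B_k-A_k|^2\le2A_k^2+2B_k^2\) reduces the claim to two one-sided estimates:
\[
 \sum_{k=1}^{n-1}A_k^2\le4\sum_je_j^2
 \qquad\text{and}\qquad
 \sum_{k=1}^{n-1}B_k^2\le4\sum_je_j^2 .
\]
These give \(2\cdot4+2\cdot4=16\), which is exactly the stated constant.

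The suffix estimate follows from the prefix one by reversing the index. Put \(e'_j=e_{n+1-j}\) and \(m=n-k\). Then \(B_k\) becomes the prefix average \(A'_m=m^{-1}\sum_{j\le m}e'_j\), and the sum over \(k\in\{1,\dots,n-1\}\) becomes the sum over \(m\in\{1,\dots,n-1\}\). So it suffices to prove the finite Hardy inequality \(\sum_{k=1}^{N}A_k^2\le4\sum_{k=1}^Ne_k^2\) for every \(N\le n\) and arbitrary real \(e_j\).

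For the Hardy inequality I would use the telescoping argument attributed to Elliott, with \(A_0:=0\). Since \(kA_k-(k-1)A_{k-1}=e_k\), we have
\[
 A_k^2-2e_kA_k=(1-2k)A_k^2+2(k-1)A_{k-1}A_k .
\]
Applying \(2A_{k-1}A_k\le A_{k-1}^2+A_k^2\) gives
\[
 A_k^2-2e_kA_k\le (k-1)A_{k-1}^2-kA_k^2 .
\]
Summing over \(k\le N\), the right side telescopes to \(-NA_N^2\le0\). Hence
\[
 \sum_{k\le N}A_k^2\le2\sum_{k\le N}e_kA_k\le2\Bigl(\sum_{k\le N}e_k^2\Bigr)^{1/2}\Bigl(\sum_{k\le N}A_k^2\Bigr)^{1/2}
\]
by Cauchy--Schwarz. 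Dividing through (the case where the sum of \(A_k^2\) vanishes is trivial) and squaring yields the constant \(4\).

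No step here is a genuine obstacle. The only points needing care are that the inequality is sign-free, so no absolute values have to be inserted, and that the index ranges stay within \(\{1,\dots,n-1\}\), so both Hardy applications are finite-\(N\) statements with \(N=n-1\). The resulting bound is dimension-free, which is what allows it to be combined with \cref{lem:grid-gradient-short} to control the coefficients of the gap array.
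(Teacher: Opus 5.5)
Your proposal is correct and follows the paper's proof: split with \(|a-b|^2\le2|a|^2+2|b|^2\), apply the one-sided discrete Hardy inequality with constant \(4\) to the prefix averages and to the reversed sequence, and collect \(2\cdot4+2\cdot4=16\). The only difference is that you prove the Hardy inequality yourself, correctly, via Elliott's telescoping argument, whereas the paper cites Hardy--Littlewood--P\'olya, Theorem~326.
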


\begin{proof}
The one-sided discrete Hardy inequality
\cite[Theorem~326]{HardyLittlewoodPolya1952} says
\[
 \sum_{k=1}^n\left|\frac1k\sum_{j\le k}e_j\right|^2
 \le4\sum_{j=1}^n|e_j|^2.
\]
Apply it once in the forward order and once to the reversed sequence.
Then use \(|a-b|^2\le2|a|^2+2|b|^2\) and multiply by \(1/n\).
The two contributions are at most \(8n^{-1}\sum_j|e_j|^2\) each,
which proves the displayed constant \(16\).
\end{proof}

\subsection{The environment fluctuation and its centering}

\begin{proposition}[Environment response]\label{prop:environment-short}
Under \(\mathbb P_n^{\rm gap}\),
\begin{equation}\label{eq:Phi-clt-short}
 \frac{\Phi_n(U)-\E\Phi_n(U)}{\sqrt n}
 \Longrightarrow\mathcal N(0,v_{\rm env}),
 \qquad
 \E\Phi_n(U)=n\zeta(2)+O(\log^2n).
\end{equation}
\end{proposition}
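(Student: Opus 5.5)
The plan is to linearize \(\Phi_n\) around the deterministic harmonic grid \(\bar u_n\), use Lemma~\ref{lem:linearization-short} to control the remainder, rewrite the linear term as the triangular array of Lemma~\ref{lem:env-array-short}, and use Lemmas~\ref{lem:grid-gradient-short} and~\ref{lem:hardy-short} to replace the finite gradient by the continuum influence function.

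\textbf{Reduction to ordered coordinates.} \(\Phi_n\) is symmetric under permutations of the coordinates, so we may work with the order statistics. Set \(V=\bar u_n\) and \(U=(U_{(1)},\ldots,U_{(n)})\); both are strictly increasing. Since \(\sum_jU_{(j)}=0\) and \(\sum_j\bar u_{j,n}=0\) (by the symmetry \(\mathsf H_{j-1}-\mathsf H_{n-j}\)), the displacement \(z=U-\bar u_n\) has \(\bar z=0\). Lemma~\ref{lem:linearization-short} then gives
\[
 |\Phi_n(U)-\Phi_n(\bar u_n)-D\Phi_n(\bar u_n)[z]|
 \le C(1+R+\osc\bar u_n)\|z\|_2^2 .
\]
Here \(\osc\bar u_n=O(\log n)\). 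By \eqref{eq:grid-L2-short} and \eqref{eq:moment-short}, \(\E\|z\|_2^2=2\mathsf H_{n-1}\) and \(\E\|z\|_2^4=O(\log^2n)\). Combining these with Cauchy--Schwarz and the fourth moment of \(R\) (which is \(O(\log^4 n)\) by the exponential moment bound in Lemma~\ref{lem:gaps-short}) bounds the expected remainder by \(O(\log^2n)\). This is \(o(\sqrt n)\) in \(L^1\).

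\textbf{The linear term as a gap array.} By \eqref{eq:ordered-short},
\[
 z_j=\sum_{k<j}\frac{E_k-1}{n-k}-\sum_{k\ge j}\frac{E_k-1}{k}.
\]
Writing \(e_j=\partial_{U_j}\Phi_n(\bar u_n)\) and exchanging the order of summation gives
\[
 n^{-1/2}D\Phi_n(\bar u_n)[z]=\sum_k\tilde a_{n,k}(E_k-1),
\]
where \(\tilde a_{n,k}\) is \(a_{n,k}\) of Lemma~\ref{lem:env-array-short} with \(f'(\bar u_{j,n})=r(L(\bar u_{j,n}))\) replaced by \(e_j\). The \(E_k\) are independent with unit variance, so
\[
 \E\Bigl|\sum_k(\tilde a_{n,k}-a_{n,k})(E_k-1)\Bigr|^2=\sum_k(\tilde a_{n,k}-a_{n,k})^2 .
\]
Lemma~\ref{lem:hardy-short} bounds this by \(16n^{-1}\sum_j|e_j-r(x_j)|^2\), which tends to zero by Lemma~\ref{lem:grid-gradient-short}. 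Lemma~\ref{lem:env-array-short} then gives the Gaussian limit \(\mathcal N(0,v_{\rm env})\) for the linear term. The linear term has mean zero, so \(\E\Phi_n(U)-\Phi_n(\bar u_n)\) is bounded by the expected remainder, namely \(O(\log^2n)\). Slutsky's lemma yields the CLT with the centering \(\E\Phi_n(U)\).

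\textbf{The centering.} It remains to show \(\Phi_n(\bar u_n)=n\zeta(2)+O(\log^2n)\), equivalently \(\mathcal M(\bar\varpi_n)-\zeta(2)=O(\log^2n/n)\). Hinge Lipschitz continuity gives
\[
 e^{-2\delta}\overline F_\ell\le\overline F_{\bar\varpi_n}\le e^{2\delta}\overline F_\ell,
 \qquad \delta=W_1(\bar\varpi_n,\ell)=O(\log n/n)
\]
by \eqref{eq:W1-short}. Hence \(|m_{\bar\varpi_n}(u)-m_\ell(u)|\le C\delta(1+u_+)\). Averaging over \(\bar\varpi_n\) costs \(O(\log n/n)\cdot O(\log n)\). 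The remaining piece is \(\int m_\ell\,\dd(\bar\varpi_n-\ell)\). Since \(m_\ell\) is Lipschitz with derivative \(1-(1-x)b(x)/x\) bounded, this piece is at most \(CW_1=O(\log n/n)\). Together these give \(O(\log^2n)\) after multiplying by \(n\).

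\textbf{Main obstacle.} The delicate step is the remainder bound: it needs the moments of \(\|z\|^2\) jointly with the oscillation. If the fourth-moment route is too crude, one can restrict to \(\{R\le A\log n\}\) and discard the complement using \eqref{eq:weighted-tail-short}.
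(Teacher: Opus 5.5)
Your proposal is correct and follows the paper's proof. You linearize \(\Phi_n\) at the harmonic grid via \cref{lem:linearization-short}, rewrite the linear term as the exponential-gap array, and swap the finite gradient for \(r\) using \cref{lem:hardy-short,lem:grid-gradient-short}. The only difference is the centering step: you bound \(\int m_\ell\,\dd(\bar\varpi_n-\ell)\) directly by Kantorovich--Rubinstein duality, using \(0\le m_\ell'\le1\) and \(W_1(\bar\varpi_n,\ell)=O(\log n/n)\). This is a shorter route than the paper's comparison of the grid with the logistic quantiles \(\lambda_{j,n}\) followed by a monotone Riemann-sum estimate for \(b(x)/x\).
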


\begin{proof}
The segment joining \(U\) to \(\bar u_n\) remains in one ordering
region.  By \cref{lem:gaps-short,lem:linearization-short},
\begin{equation}\label{eq:remainder-L1-short}
 \E|\Phi_n(U)-\Phi_n(\bar u_n)
 -\nabla\Phi_n(\bar u_n)\cdot(U-\bar u_n)|=O(\log^2n).
\end{equation}
From \eqref{eq:ordered-short}, the linear term divided by \(\sqrt n\)
is the spacing array in \cref{lem:env-array-short}, with
\(f'(\bar u_{j,n})\) replaced by the finite gradient.
\Cref{lem:hardy-short} and \eqref{eq:gradient-short} show that this replacement is
negligible in coefficient \(\ell^2\).  Hence
\eqref{eq:array-clt-short} and \eqref{eq:remainder-L1-short} prove the
first assertion in \eqref{eq:Phi-clt-short}.

Finally, let
\[
 \bar\varpi_n=\frac1n\sum_j\delta_{\bar u_{j,n}},\qquad
 \delta_n^{\rm grid}=W_1(\bar\varpi_n,\ell)
 =O(\log n/n).
\]
Since \(m_\ell(u)\le1+u_+\) and the positive part of the harmonic
grid has a bounded empirical mean,
\[
 \frac1n\sum_jm_\ell(\bar u_{j,n})=O(1).
\]
The relative-survival comparison used in
\cref{lem:finite-row-short} gives
\[
 \left|\Phi_n(\bar u_n)-\sum_jm_\ell(\bar u_{j,n})\right|
 \le Cn\delta_n^{\rm grid}=O(\log n).
\]
For completeness, set
\[
 \lambda_{j,n}=\log\frac{j}{n+1-j},
 \qquad L(\lambda_{j,n})=\frac{j}{n+1}.
\]
The harmonic-number expansion gives
\[
 |\bar u_{j,n}-\lambda_{j,n}|
 \le C\left(\frac1j+\frac1{n+1-j}\right),
 \qquad
 \sum_j|\bar u_{j,n}-\lambda_{j,n}|=O(\log n).
\]
Moreover \(m_\ell'(u)=1-(1-L(u))b\{L(u)\}/L(u)\) lies in
\([0,1]\), so replacing the harmonic grid by \(\lambda_{j,n}\)
costs only \(O(\log n)\).  Now put \(h(x)=b(x)/x\).  This function is
increasing.  The upper-minus-lower Riemann-sum error on
\([0,n/(n+1)]\), after multiplication by \(n+1\), is at most
\[
 h\!\left(\frac n{n+1}\right)-h(0)=O(\log n).
\]
The omitted last interval also contributes \(O(\log n)\), because
\((n+1)\int_{n/(n+1)}^1h(x)\,\dd x=O(\log n)\).  Hence
\[
 \sum_jm_\ell(\bar u_{j,n})
 =n\int_0^1\frac{b(x)}x\dd x+O(\log n)
 =n\zeta(2)+O(\log n).
\]
The linear term has mean zero, and
\eqref{eq:remainder-L1-short} gives the centering assertion.
\end{proof}

\section{Completion of the proof}
\label{sec:ProofComp}

\begin{proposition}[Reference-law CLT]\label{prop:B-clt-short}
Under \(\mathbb P_n^{\mathrm{ref}}\),
\[
 \sqrt n\left\{\frac1n\sum_iD_i-\zeta(2)\right\}
 \Longrightarrow
 \mathcal N\bigl(0,4\zeta(2)-4\zeta(3)\bigr).
\]
\end{proposition}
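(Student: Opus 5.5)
The plan is to combine the two Gaussian components through conditional characteristic functions, using the decomposition \eqref{eq:intro-decomp}. Write
\[
 \sqrt n\Bigl\{\tfrac1n\textstyle\sum_iD_i-\zeta(2)\Bigr\}
 =X_n+Y_n+Z_n,
\]
where
\[
 X_n=n^{-1/2}\textstyle\sum_i(D_i-m_{i,n}),\qquad
 Y_n=n^{-1/2}\{\Phi_n(U)-\E\Phi_n(U)\},
\]
\[
 Z_n=n^{-1/2}\bigl\{\textstyle\sum_im_{i,n}-\Phi_n(U)\bigr\}
 +n^{-1/2}\{\E\Phi_n(U)-n\zeta(2)\}.
\]
Under \(\mathbb P_n^{\mathrm{ref}}\) the marginal law of \(U\) is \(\mathbb P_n^{\rm gap}\). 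Hence \cref{lem:finite-row-short} gives the first part of \(Z_n\to0\) in \(L^1\). The centering assertion of \cref{prop:environment-short} makes the second part \(O(\log^2n/\sqrt n)\to0\). So \(Z_n\to0\) in probability, and by Slutsky it suffices to show \(X_n+Y_n\Rightarrow\mathcal N(0,v_{\rm row}+v_{\rm env})\).

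For fixed \(t\), condition on \(U\). Since \(Y_n\) is \(U\)-measurable,
\[
 \E e^{it(X_n+Y_n)}=\E\bigl[e^{itY_n}\,\E(e^{itX_n}\mid U)\bigr].
\]
By \cref{lem:conditional-cf-short}, \(\E(e^{itX_n}\mid U)=\exp\{-\tfrac{t^2}{2n}\sum_iv_{i,n}+o_{\Pp}(1)\}\). By \eqref{eq:finite-var-short}, \(n^{-1}\sum_iv_{i,n}\to v_{\rm row}\) in probability. Hence \(\E(e^{itX_n}\mid U)-e^{-t^2v_{\rm row}/2}\to0\) in probability. Both quantities have modulus at most one, so bounded convergence gives
\[
 \E e^{it(X_n+Y_n)}=e^{-t^2v_{\rm row}/2}\,\E e^{itY_n}+o(1).
\]
By \cref{prop:environment-short}, \(\E e^{itY_n}\to e^{-t^2v_{\rm env}/2}\). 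Lévy's continuity theorem then gives the Gaussian limit with variance
\[
 v_{\rm row}+v_{\rm env}=(4\zeta(3)-2\zeta(2))+(6\zeta(2)-8\zeta(3))=4\zeta(2)-4\zeta(3),
\]
using \eqref{eq:vrow-short} and \eqref{eq:array-short}. This is the claimed variance.

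The main point needing care is that the \(o_{\Pp}(1)\) in \cref{lem:conditional-cf-short} lives inside an exponent. That exponent has nonpositive real part up to the error, so I would first restrict to the good event \(\mathcal G_n\cap\{|n^{-1}\sum_iv_{i,n}-v_{\rm row}|\le\eta\}\). On this event the conditional characteristic function is within \(o_{\Pp}(1)\) of the Gaussian factor. Its complement has vanishing probability and contributes at most its probability, since all characteristic functions are bounded by one. No independence between \(X_n\) and \(Y_n\) is needed, because the row noise is conditionally independent given \(U\) while \(Y_n\) is \(U\)-measurable. The only remaining bookkeeping is that all the cited inputs are stated under \(\mathbb P_n^{\rm gap}\) or \(\mathbb P_n^{\mathrm{ref}}\), and these agree on functions of \(U\).
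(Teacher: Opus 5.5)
Your proposal is correct and follows essentially the same route as the paper: the same four-term decomposition around \(\Phi_n(U)\), and the same conditional characteristic-function step \(\bigl|\E e^{it(X_n+Y_n)}-e^{-t^2v_{\rm row}/2}\E e^{itY_n}\bigr|\le\E\bigl|\E(e^{itX_n}\mid U)-e^{-t^2v_{\rm row}/2}\bigr|\to0\) by bounded convergence, followed by the same variance addition. The only cosmetic difference is that you discard the negligible \(U\)-measurable terms by Slutsky before conditioning, whereas the paper keeps them inside its \(U\)-measurable summand \(Y_n\).
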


\begin{proof}
Insert \(\Phi_n(U)\) into the exact decomposition
\begin{align*}
 \sqrt n\left\{\frac1n\sum_iD_i-\zeta(2)\right\}
={}&\frac1{\sqrt n}\sum_i(D_i-m_{i,n})
 +\frac1{\sqrt n}\left\{\sum_im_{i,n}-\Phi_n(U)\right\}\\
 &+\frac{\Phi_n(U)-\E\Phi_n(U)}{\sqrt n}
 +\frac{\E\Phi_n(U)-n\zeta(2)}{\sqrt n}.
\end{align*}
The second term vanishes by \cref{lem:finite-row-short}, the fourth
by \cref{prop:environment-short}, and the third
converges to \(\mathcal N(0,v_{\rm env})\).  By
\eqref{eq:conditional-cf-short} and \eqref{eq:finite-var-short}, the
conditional characteristic function of the first term converges in
probability, hence in \(L^1\), to
\(e^{-t^2v_{\rm row}/2}\).  If \(X_n\) is the first term and \(Y_n\)
the remaining \(U\)-measurable terms, then
\[
 \left|\E e^{it(X_n+Y_n)}-
 e^{-t^2v_{\rm row}/2}\E e^{itY_n}\right|
 \le\E\left|\E(e^{itX_n}\mid U)-e^{-t^2v_{\rm row}/2}\right|
 \longrightarrow0.
\]
Thus the variances add, without an independence assertion:
\[
 v_{\rm row}+v_{\rm env}
 =4\zeta(3)-2\zeta(2)+6\zeta(2)-8\zeta(3)
 =4\zeta(2)-4\zeta(3).
\]
\end{proof}

\begin{proof}[Proof of \cref{thm:main}]
For every fixed \(t\), total variation and
\cref{prop:first-tv-short,prop:second-tv-short} give
\[
 \left|\E_{\mathbb P_n^{\mathrm{can}}}e^{it\sqrt n(n^{-1}\sum_iD_i-\zeta(2))}
 -\E_{\mathbb P_n^{\mathrm{ref}}}e^{it\sqrt n(n^{-1}\sum_iD_i-\zeta(2))}\right|
 \le2\|\mathbb P_n^{\mathrm{can}}-\mathbb P_n^{\mathrm{ref}}\|_{\rm TV}\longrightarrow0.
\]
Only a modulus-one test is transferred; no unbounded moment is passed
through total variation.  Under the exact canonical law,
\(C_n=n^{-1}\sum_iD_i\) by \cref{prop:canonical}.  The conclusion
follows from \cref{prop:B-clt-short} and L\'evy's theorem.
\end{proof}

\appendix

\section{Derivative calculation for the influence function}
\label{app:influence-derivative}

We give the details behind \cref{lem:influence-short}.  Let \(h\) be a
compactly supported signed measure with \(h(\mathbb R)=0\), and write
\[
 \varpi_t(\dd u)=\ell(u)\,\dd u+t h(\dd u).
\]
The calculation is algebraic when \(\varpi_t\) is signed and is the
usual one-sided derivative whenever \(\varpi_t\) remains a probability
measure.  Recall
\[
 R_h(a)=\int_{\mathbb R}(a+v)_+h(\dd v).
\]
Since \(\Lambda_\varpi\) is linear in \(\varpi\),
\[
 \left.\frac{\dd}{\dd t}\Lambda_{\varpi_t}(a)\right|_{t=0}
 =R_h(a).
\]
Differentiating the exponent in
\eqref{eq:continuum-survival-short} therefore gives
\begin{align*}
 \left.\frac{\dd}{\dd t}
 \overline F_{\varpi_t}(d\mid u)\right|_{t=0}
 &={\overline F}_\ell(d\mid u)
   \{R_h(-u)-R_h(d-u)\}.
\end{align*}
Using
\({\overline F}_\ell(d\mid u)=L(u-d)/L(u)\) and integrating over
\(d\) yields
\begin{align}
 \dot m_h(u)
 &:=\left.\frac{\dd}{\dd t}m_{\varpi_t}(u)\right|_{t=0}
 \notag\\
 &=m_\ell(u)R_h(-u)
 -\frac1{L(u)}\int_0^\infty
 L(u-d)R_h(d-u)\,\dd d.
 \label{eq:appendix-mdot}
\end{align}

There are two contributions when \(\mathcal M(\varpi_t)\) is
differentiated: the row response changes, and the measure with respect
to which it is averaged changes.  Consequently,
\begin{equation}
 D\mathcal M_\ell[h]
 =\int_{\mathbb R}m_\ell(v)h(\dd v)
 +\int_{\mathbb R}\dot m_h(u)\ell(u)\,\dd u.
 \label{eq:appendix-Mdot}
\end{equation}
Now
\[
 R_h(-u)=\int_{\mathbb R}(v-u)_+h(\dd v),
 \qquad
 R_h(d-u)=\int_{\mathbb R}(d-u+v)_+h(\dd v).
\]
Substituting these identities into
\eqref{eq:appendix-mdot}--\eqref{eq:appendix-Mdot} and applying Fubini
shows that
\[
 D\mathcal M_\ell[h]=\int_{\mathbb R}f(v)h(\dd v),
\]
where one may take
\begin{align}
 f(v)={}&m_\ell(v)
 +\int_{\mathbb R}\ell(u)m_\ell(u)(v-u)_+\,\dd u
 \notag\\
 &-\int_{\mathbb R}\frac{\ell(u)}{L(u)}
   \int_0^\infty L(u-d)(d-u+v)_+\,\dd d\,\dd u.
 \label{eq:appendix-f}
\end{align}
All the integrals are absolutely convergent for compactly supported
\(h\).  Because \(h(\mathbb R)=0\), adding a constant to \(f\) does
not change the linear functional.

We next differentiate \eqref{eq:appendix-f} with respect to \(v\).
Away from a null set of thresholds,
\[
 \frac{\partial}{\partial v}(v-u)_+=\1_{\{u<v\}},
 \qquad
 \frac{\partial}{\partial v}(d-u+v)_+=\1_{\{d>u-v\}}.
\]
It follows that
\begin{align*}
 f'(v)={}&m_\ell'(v)
 +\int_{u<v}\ell(u)m_\ell(u)\,\dd u\\
 &-\int_{\mathbb R}\frac{\ell(u)}{L(u)}
   \int_0^\infty L(u-d)\1_{\{d>u-v\}}\,\dd d\,\dd u.
\end{align*}
If \(u<v\), then \(u-v<0\), so the inner integral runs over every
\(d\ge0\) and
\[
 \frac1{L(u)}\int_0^\infty L(u-d)\,\dd d=m_\ell(u).
\]
This cancels the second term above.  Hence
\begin{equation}
 f'(v)=m_\ell'(v)
 -\int_v^\infty\frac{\ell(u)}{L(u)}
   \int_{u-v}^\infty L(u-d)\,\dd d\,\dd u.
 \label{eq:appendix-fprime}
\end{equation}

Put \(x=L(v)\) and \(b(x)=-\log(1-x)\).  With the substitution
\(s=u-d\),
\[
 \int_{u-v}^\infty L(u-d)\,\dd d
 =\int_{-\infty}^vL(s)\,\dd s
 =\log(1+e^v)=b(x).
\]
Moreover, since \(\ell(u)=L(u)\{1-L(u)\}\),
\[
 \int_v^\infty\frac{\ell(u)}{L(u)}\,\dd u
 =\int_v^\infty\{1-L(u)\}\,\dd u
 =-\log L(v)=-\log x.
\]
Finally, \(m_\ell(v)=b(x)/x\) and
\(\dd x/\dd v=x(1-x)\), so
\[
 m_\ell'(v)
 =\left\{\frac1{x(1-x)}-\frac{b(x)}{x^2}\right\}x(1-x)
 =1-\frac{(1-x)b(x)}x.
\]
Substitution in \eqref{eq:appendix-fprime} gives
\[
 f'(v)
 =1-\frac{(1-x)b(x)}x+b(x)\log x
 =1-\frac{(1-x)b(x)}x-\log x\log(1-x),
\]
which is \eqref{eq:reff-short}.

For completeness, let
\[
 B(x)=x+(1-x)\log(1-x),
 \qquad A(x)=\log x\,B(x).
\]
Since \(B'(x)=b(x)\),
\[
 A'(x)=\frac{B(x)}x+\log x\,b(x)
 =1-\frac{(1-x)b(x)}x-\log x\log(1-x)=r(x).
\]
Also \(B(x)=O(x^2)\) as \(x\downarrow0\), while
\(\log x\to0\) as \(x\uparrow1\).  Thus \(A(0)=A(1)=0\), and
\[
 \int_0^xr(y)\,\dd y=A(x),
 \qquad
 \int_0^1r(y)\,\dd y=0,
\]
as asserted in \eqref{eq:Aeff-short}.

\end{document}